\algnewcommand\algorithmicinput{\textbf{Input:}}
\algnewcommand\Input{\item[\algorithmicinput]}
\algnewcommand\algorithmicoutput{\textbf{Output:}}
\algnewcommand\Output{\item[\algorithmicoutput]}
\algnewcommand{\LineIf}[2]{\State \algorithmicif\, #1 \,\algorithmicthen\, #2 \,\algorithmicend\ \algorithmicif}
\algnewcommand{\LineForAll}[2]{\State \algorithmicforall\, #1 \,\algorithmicdo\, #2 \,\algorithmicend\ \algorithmicfor}
\algnewcommand{\Accept}{\textbf{accept}}
\algnewcommand{\Reject}{\textbf{reject}}
\newcommand{\Reg}{\mathrm{RegularElement}}
\newcommand{\Regbf}{\mathbf{RegularElement}}
\newcommand{\GenReg}{\mathrm{GeneralizedRegular}}
\newcommand{\GenRegbf}{\mathbf{GeneralizedRegular}}
\newcommand{\GenInv}{\mathrm{GeneralizedInverse}}
\newcommand{\GenInvbf}{\mathbf{GeneralizedInverse}}
\newcommand{\GenWeakInv}{\mathrm{GeneralizedWeakInverse}}
\newcommand{\GenWeakInvbf}{\mathbf{GeneralizedWeakInverse}}
\newcommand{\LeftId}{\mathrm{LeftIdentities}}
\newcommand{\LeftIdbf}{\mathbf{LeftIdentities}}
\newcommand{\RightId}{\mathrm{RightIdentities}}
\newcommand{\RightIdbf}{\mathbf{RightIdentities}}
\newcommand{\Rtrivial}{\mathrm{\gR\text{-}trivial}}
\newcommand{\Rtrivialbf}{\mathbf{\gR\text{-}trivial}}
\newcommand{\Commutative}{\mathrm{Commutative}}
\newcommand{\Commutativebf}{\mathbf{Commutative}}
\newcommand{\LeftZero}{\mathrm{LeftZero}}
\newcommand{\LeftZerobf}{\mathbf{LeftZero}}
\newcommand{\RightZero}{\mathrm{RightZero}}
\newcommand{\RightZerobf}{\mathbf{RightZero}}
\newcommand{\Zero}{\mathrm{Zero}}
\newcommand{\Zerobf}{\mathbf{Zero}}
\newcommand{\CompReg}{\mathrm{CompletelyRegular}}
\newcommand{\CompRegbf}{\mathbf{CompletelyRegular}}
\newcommand{\NilSgp}{\mathrm{NilpotentSemigroup}}
\newcommand{\NilSgpbf}{\mathbf{NilpotentSemigroup}}
\newcommand{\Model}{\mathrm{Model}}
\newcommand{\Modelbf}{\mathbf{Model}}
\newcommand{\DFAIsect}{\mathrm{DFAIntersection}}
\newcommand{\DFAIsectbf}{\mathbf{DFAIntersection}}
\newcommand{\DFAEmp}{\mathrm{DFAEmptiness}}
\newcommand{\DFAEmpbf}{\mathbf{DFAEmptiness}}
\newcommand{\Fix}{\mathrm{Fix}}
\newcommand{\autA}{\mathcal{A}}
\newcommand{\gR}{\mathcal{R}}
\newcommand{\FO}{\ensuremath\mathsf{FO}}
\newcommand{\AC}{\ensuremath\mathsf{AC}}
\newcommand{\PSPACE}{\ensuremath\mathsf{PSPACE}}
\newcommand{\NPSPACE}{\ensuremath\mathsf{NPSPACE}}
\newcommand{\NL}{\ensuremath\mathsf{NL}}
\newcommand{\NP}{\ensuremath\mathsf{NP}}
\newcommand{\co}{\ensuremath\mathsf{co}}
\newcommand{\eq}{\ensuremath\mathrel{=}}
\newcommand{\ie}{i.e.,~}
\newcommand{\set}[2]{\left\{#1\mathrel{\left|\vphantom{#1}\vphantom{#2}\right.}#2\right\}}
\newcommand{\os}[1]{\left\{\mathinner{#1}\right\}}
\newcommand{\abs}[1]{\left|\mathinner{#1}\right|}
\newtheorem{theorem}{Theorem}[section]
\newtheorem{lemma}[theorem]{Lemma}
\newtheorem{corollary}[theorem]{Corollary}
\theoremstyle{remark}
\newtheorem{remark}[theorem]{Remark}
\numberwithin{equation}{section}
\begin{document}

\title{On the Complexity of Properties of Transformation Semigroups}
\date{\today}

\author{Lukas Fleischer}
\address{University of Stuttgart, FMI \\
Universit\"atsstra{\ss}e 38, 70569 Stuttgart, Germany\\[3mm]
School of Computer Science, University of Waterloo \\
200 University Avenue West, Waterloo, ON N2L 3G1, Canada
}
\email{fleischer@fmi.uni-stuttgart.de}

\author{Trevor Jack}
\address{University of Colorado, Department of Mathematics \\ 2300 Colorado Avenue, Boulder, USA}
\email{trevor.jack@colorado.edu}

\thanks{This material is based upon work supported by the National Science Foundation under Grant No. DMS 1500254.}
\keywords{transformation semigroups, algorithms, computational complexity, $\PSPACE$-completeness, $\NL$-completeness, regular elements, semigroup identities}
\subjclass[2010]{Primary: 20M20; Secondary 68Q25}

\begin{abstract}
  We investigate the computational complexity for determining various properties of a finite transformation semigroup given by generators.
  We introduce a simple framework to describe transformation semigroup properties that are decidable in $\AC^0$. This framework is then used to show that the problems of deciding whether a transformation semigroup is a group, commutative or a semilattice are in $\AC^0$.
  Deciding whether a semigroup has a left (resp.~right) zero is shown to be $\NL$-complete, as are the problems of testing whether a transformation semigroup is nilpotent, $\gR$-trivial or has central idempotents.
  We also give $\NL$ algorithms for testing whether a transformation semigroup is idempotent, orthodox, completely regular, Clifford or has commuting idempotents.
  Some of these algorithms are direct consequences of the more general result that arbitrary fixed semigroup equations can be tested in~$\NL$.
  Moreover, we show how to compute left and right identities of a transformation semigroup in polynomial time.
  Finally, we show that checking whether an element is regular is $\PSPACE$-complete.
\end{abstract}

\maketitle

\section{Introduction}
\label{sec:intro}

Given a permutation group by generators, many of its properties, like size and membership, can be determined in polynomial time using Sims' stabilizer chains.
In contrast, the known algorithms for the corresponding problems for transformation semigroups given by generators often rely on an enumeration of the $\gR$-classes of the semigroup, which already requires exponential time \cite{EA:CFS,MI:GAP}. Moreover, the membership problem for transformation semigroups is known to be $\PSPACE$-complete \cite{KO:LBN}, as is checking whether two elements are $\gR$-related \cite{BS:CATM} and whether a transformation semigroup is aperiodic \cite{CH:FAA}.
In this work, we will show that many similar problems, such as checking whether a specific element is regular and variants thereof, are also $\PSPACE$-complete (Theorem~\ref{RegThm}).

While the hardness results above indicate that problems concerning Green's relations, inverses and idempotent elements are generally hard in the transformation semigroup setting, this is certainly not true in general. Clearly, the problem of checking whether a given transformation is idempotent is in $\AC^0$, the class of all sets decidable by unbounded fan-in Boolean circuits of constant depth: it suffices to verify that the transformation is the identity on its image.
Similarly, we show that the problems of testing whether a transformation semigroup is a group, commutative or a semilattice are in $\AC^0$.
Our results even hold in the \emph{uniform} setting where so-called \emph{direct connection languages} of the circuits are required to be decidable in logarithmic time using a deterministic random-access Turing machine.

Aside from that, we prove that many decision problems for transformation semigroups are complete for $\NL$, the class of all sets decidable in non-deterministic logarithmic space. This includes testing whether a transformation semigroup
\begin{itemize}
\item contains a left zero, a right zero or a zero (Theorem~\ref{thm:zero});
\item is nilpotent (Theorem~\ref{thm:nil});
\item is $\gR$-trivial (Theorem~\ref{thm:Rtrivial});
\item has central idempotents (Theorem~\ref{CentralIdempotentsThm}).
\end{itemize}

For some problems, we establish membership in $\NL$, but we leave hardness results as open problems. These include testing whether a transformation semigroup
\begin{itemize}
\item is a band (Corollary~\ref{IdemCor});
\item has commuting idempotents (Corollary~\ref{IdemCor});
\item is orthodox (Corollary~\ref{IdemCor});
\item is completely regular (Theorem~\ref{ComRegThm});
\item is a Clifford semigroup (Corollary~\ref{CliffCor}).
\end{itemize}
We also describe an $\NL$ algorithm to check whether a transformation semigroup models a fixed identity or a fixed quasi-identity involving idempotents (Theorem~\ref{thm:model}) and prove that this problem is $\NL$-complete for very simple identities already (Theorem~\ref{SimpleThm}).

Additionally, we show that the left and right identities of a transformation semigroup can be enumerated in polynomial time (Theorems~\ref{LidsThm} and~\ref{RidsThm}).

\section{Preliminaries} \label{NotationSection}

Let $S$ be a semigroup.
An element $\ell$ of a semigroup $S$ is a \emph{left identity} if $\ell s = s$ for all $s \in S$. 
An element $r$ of a semigroup $S$ is a \emph{right identity} if $sr = s$ for all $s \in S$. 
An element $\ell$ of a semigroup $S$ is a \emph{left zero} if $\ell s=\ell$ for all $s \in S$.
An element $r$ of a semigroup $S$ is a \emph{right zero} if $sr=r$ for all $s \in S$.
An element that is both a left and a right zero is called \emph{zero}. If a semigroup contains a zero element, this element is unique and usually denoted by $0$.
If a semigroup contains a left zero $\ell$ and a right zero $r$, then $\ell = \ell r = r$ is a zero. 
A semigroup $S$ that has a zero, $0 \in S$, is called \emph{d-nilpotent} if $S^d = \{0\}$ for a fixed $d \in \mathbb{N}$. We call $S$ \emph{nilpotent} if it is $d$-nilpotent for some $d \in \mathbb{N}$. The smallest such $d$ is called its \emph{nilpotency degree}. 

An element $e \in S$ is \emph{idempotent} if $e^2 = e$.
A semigroup is \emph{idempotent} if all elements are idempotent. Following classical terminology, idempotent semigroups are often also referred to as \emph{bands}.
A semigroup is \emph{orthodox} if the product of any two idempotents is again idempotent.
It is well-known that in every finite semigroup $S$, there is a natural number $\omega_S$ such that $s^{\omega_S}$ is idempotent for every $s \in S$. If the reference to $S$ is clear from the context, we usually write $\omega$ instead of $\omega_S$.

Two elements $s, t \in S$ \emph{commute} if $st = ts$. An element $s \in S$ is \emph{central} if it commutes with every other element of $S$. A semigroup $S$ is \emph{commutative} if all pairs of elements of $S$ commute. A semigroup that is both idempotent and commutative is also called a \emph{semilattice}.

An element $s \in S$ is \emph{regular} if there exists $t \in S$ such that $sts = s$. An element $t \in S$ such that $tst = t$ is called a \emph{weak inverse} of $s$. If $sts = s$ and $tst = t$, then $t$ is an \emph{inverse} of $s$. 
A semigroup is called \emph{regular} if all of its elements are regular. 
A semigroup is \emph{inverse} if every element has a unique inverse.
A semigroup $S$ is \emph{completely regular} if every element of $S$ belongs to some subgroup of $S$.
A semigroup is a \emph{Clifford semigroup} if it is completely regular and its idempotents commute. 

A semigroup is called \emph{aperiodic} if it does not contain any nontrivial subsemigroup that is a group.
A semigroup $S$ is \emph{$\gR$-trivial} if it does not contain two distinct elements $s, t \in S$ such that $sS \cup \os{s} = tS \cup \os{t}$.

A \emph{variety of finite semigroups} is a class of finite semigroups that is closed under finite direct products and under taking divisors. In the literature, such classes of semigroups are often also referred to as pseudovarieties.

The \emph{full transformation semigroup} over some set $Q$ is the set of all mappings $f \colon Q \to Q$, the so-called \emph{transformations over~$Q$}, together with function composition.
Subsemigroups of the full transformation semigroup are often also referred to as \emph{transformation semigroups}.
The elements of $Q$ are sometimes referred to as \emph{points}.

For $n \in \mathbb{N}$, we define $[n] := \{1,\dots,n\}$ and we use $T_n$ to denote the full transformation semigroup over the set $[n]$. For elements $a_1,\dots,a_k \in T_n$, let $\langle a_1,\dots,a_k \rangle$ be the subsemigroup of $T_n$ generated by $a_1,\dots,a_k$.

For $s \in T_n$ and $A \subseteq T_n$, define the \emph{kernel of $s$} (resp.~$A$) as
\begin{align*}
\ker(s) & := \set{(p,q) \in [n]^2}{ps = qs}, \\
\ker(A) & := \bigcap_{a \in A} \ker(a).
\end{align*}
For $S = \langle a_1,\dots,a_k \rangle$, note that $\ker(S) = \ker(\{a_1,\dots,a_k\})$ and that $\ker(S)$ forms an equivalence relation on $[n]$. Then $[n] / \ker(S)$ is a partition into equivalence classes, $\llbracket q \rrbracket := \set{p \in [n]}{ps = qs \text{ for every } s \in S}$. We can define a natural action of $S$ on these classes by the following homomorphism:
\[ S \rightarrow T_{[n] / \ker(S)}, s \mapsto \overline{s}\]
with
\[ \llbracket q \rrbracket \overline{s} = \llbracket qs \rrbracket \text{ for } q \in [n].\]
Let $\overline{S} := \{\overline{s}: s \in S\}$.

Denote the image of a semigroup element $s \in S$ as
\[[n]s := \set{ q \in [n]}{ q = ps \text{ for some } p \in [n] \text{ and some } s \in S} \]

Define $[n]S := \bigcup_{s \in S} [n]s$. We can define a natural action of $S$ on its image by the following homomorphism:
\[ S \rightarrow T_{[n]S}, s \mapsto \widetilde{s}\]
with
\[ q \widetilde{s} =  qs \text{ for } q \in [n]S\]
and
\[\widetilde{S} := \{\widetilde{s}: s \in S\}.\]

Let $S$ be a transformation semigroup, \ie $S := \langle a_1,\dots,a_k \rangle \leq T_n$ for some $n,k \in \mathbb{N}$. For $d \in \mathbb{N}$, we define $S$ to act upon $d$-tuples $[n]^d$ component-wise: for $(q_1,\dots,q_d) \in [n]^d$ and $s \in S$,
\[(q_1,\dots,q_d)s = (q_1s,\dots,q_ds).\]

For a set of transformations $A:=\{a_1,\dots,a_k\}$ acting on a set $Q$, define the \emph{transformation graph} $\Gamma(A,Q)$ as having vertices $Q$ and directed edges
\[E := \set{(p,q) \in Q^2}{\exists i \in [k](pa_i=q)}.\]

Denote the \emph{pre-image} of an element $q \in Q$ as 
\[S^{-1}(q) := \set{q \in Q}{ps=q \text{ for some } s \in S}.\]

We use the notation
\[\Fix(A,Q) := \set{ q\in Q}{qa=q \text{ for all } a\in A }\]
to denote the set of \emph{fixed points of $A$}.
    
\section{First-Order Definable Properties}

In this section, we will investigate classes of transformation semigroups with very efficient membership tests. To this end, we introduce a variant of first-order logic that is used to define classes of semigroups. We allow quantification over the set of generators $A$ and over points of the underlying set $Q$. The only allowed predicates are of the form $p \cdot a_1 \cdots a_k = q \cdot b_1 \cdots b_\ell$ where $p, q$ are variables corresponding to points and $a_1, \dots, a_k, b_1, \dots, b_\ell$ are variables corresponding to generators. For example, the formula
\begin{align*}
  \forall a \in A \, \forall p, q \in Q \colon p \cdot a = q \cdot a
\end{align*}
can be used to express that the image of every element of $S$ is a singleton.
It follows immediately from the well-known result $\FO = \AC^0$ \cite[Theorem 5.22]{IM:DC} that the membership problem for classes of semigroups defined by such formulas is in $\AC^0$.

\begin{remark}
  It is important to note that the logical formalism described above does not allow quantification over elements of the generated transformation semigroup~$S$.
  In fact, allowing such quantifiers yields a much more expressive formalism. For example, the following formula can be used to define the class of all aperiodic transformation semigroups:
  \begin{align*}
      \forall s,t \in S \colon (s \mathrel{\mathcal{H}} t \to s = t).
  \end{align*}
  Here, $s \mathrel{\mathcal{H}} t$ is a shorthand for the formula
  \begin{align*}
      (s = t) \lor \big(& (\exists r \in S: sr = t) \land (\exists r \in S: rs = t) \land {}\\
                        & (\exists r \in S: tr = s) \land (\exists r \in S: rt = s)\big),
  \end{align*}
  $s = t$ is a short form for $\forall q \in Q \colon q \cdot s = q \cdot t$, and formulas of the form $rs = t$ are short forms for $\forall q \in Q \colon q \cdot rs = q \cdot t$. Testing whether a transformation semigroup is aperiodic has been shown to be $\PSPACE$-complete in~\cite{CH:FAA}.
\end{remark}

The class of commutative transformation semigroups is easily described using this formalism.
Therefore, we can design an efficient algorithm for the following decision problem:

\medskip
$\Commutativebf$
\begin{itemize}
\item Input: $a_1,\dots,a_k \in T_n$
\item Problem: Is $\langle a_1,\dots,a_k \rangle$ commutative?
\end{itemize}

\begin{theorem}
  $\Commutative$ is in $\AC^0$.
\end{theorem}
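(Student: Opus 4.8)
The plan is to reduce commutativity of the generated semigroup to pairwise commutativity of the generators, and then express the latter in the first-order formalism introduced above. Concretely, $\langle a_1, \dots, a_k \rangle$ is commutative if and only if $a_i a_j = a_j a_i$ for all $i, j \in [k]$. The forward implication is immediate since each $a_i$ is an element of $S$. For the converse, one uses the standard fact that the center of a semigroup is a subsemigroup: assuming the generators commute pairwise, each $a_j$ lies in the centralizer $C(a_i) = \set{t \in S}{a_i t = t a_i}$, which is a subsemigroup, so $C(a_i) = S$ and hence every $a_i$ is central; then the center of $S$ is a subsemigroup containing all generators, so it equals $S$, i.e.\ $S$ is commutative. (Equivalently, one can argue directly by induction on word length, moving generators past one another by repeated swaps.) This semigroup-theoretic lemma is the only thing that needs to be stated carefully; there is no real obstacle beyond it.

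Given the reduction, the property ``$a_i a_j = a_j a_i$ for all $i,j$'' is captured by the formula
\begin{align*}
  \forall a \in A \, \forall b \in A \, \forall p \in Q \colon p \cdot a \cdot b = p \cdot b \cdot a,
\end{align*}
which is of exactly the required shape: it quantifies only over generators in $A$ and points in $Q$, and its atomic predicates equate products of a point with generators. By the equivalence $\FO = \AC^0$ recalled before the theorem, the class of transformation semigroups satisfying this formula has its membership problem in $\AC^0$, and in fact in the uniform version described in the introduction.

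I would add one remark for robustness: the argument does not depend on the precise encoding of the input transformations, since for fixed generators $a, b$ and a point $p$ the value $p \cdot a \cdot b$ is obtained by two table look-ups, a bounded computation; this keeps the translation into the logic entirely routine. Thus the whole proof amounts to (i) the lemma on pairwise-commuting generators and (ii) writing down the displayed formula, with step (i) being the substantive — though elementary — ingredient.
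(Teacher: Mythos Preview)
Your proof is correct and follows essentially the same approach as the paper: reduce commutativity of $S$ to pairwise commutativity of the generators, express this by the formula $\forall a, b \in A \, \forall q \in Q \colon q \cdot ab = q \cdot ba$, and invoke $\FO = \AC^0$. The only difference is that you spell out the justification for the generator reduction (via centralizers being subsemigroups), whereas the paper simply asserts it as clear.
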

\begin{proof}
  Clearly, a semigroup is commutative if and only if all generators commute. Therefore, commutativity can be expressed by the formula
  \begin{align*}
    \forall a, b \in A \, \forall q \in Q \colon q \cdot ab = q \cdot ba.
  \end{align*}
  Thus, testing whether a transformation semigroup is commutative is in $\AC^0$.
\end{proof}

\begin{theorem}
The problem of determining if a transformation semigroup is a semilattice is in $\AC^0$.
\end{theorem}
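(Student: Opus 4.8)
The plan is to reduce the semilattice test to a finite conjunction of conditions on the generators, each of which is expressible in the first-order formalism of this section. Recall that a semilattice is precisely a commutative band, so the starting point is the previous theorem on commutativity together with the observation that, for a \emph{commutative} semigroup, being a band is already witnessed by the generators.

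Concretely, I would establish the following claim: for $A = \os{a_1,\dots,a_k} \subseteq T_n$, the semigroup $\langle a_1,\dots,a_k \rangle$ is a semilattice if and only if every generator is idempotent and every pair of generators commutes. The forward direction is immediate, since the generators are themselves elements of $\langle a_1,\dots,a_k\rangle$. For the converse, assume $a_i a_i = a_i$ for all $i$ and $a_i a_j = a_j a_i$ for all $i,j$. Pairwise commutativity of the generators already implies that $\langle a_1,\dots,a_k \rangle$ is commutative. Now take an arbitrary element $s = a_{i_1} \cdots a_{i_m}$. Using commutativity, reorder the factors so that equal generators are adjacent, and then collapse each maximal run of a repeated generator to a single occurrence using $a_j^2 = a_j$; this shows $s = a_{j_1} \cdots a_{j_r}$ for pairwise distinct generators $a_{j_1},\dots,a_{j_r}$. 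Then
\begin{align*}
  s^2 = a_{j_1}^2 \cdots a_{j_r}^2 = a_{j_1} \cdots a_{j_r} = s
\end{align*}
by commutativity and idempotency, so $s$ is idempotent. Hence $\langle a_1,\dots,a_k \rangle$ is a band, and being also commutative, a semilattice.

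Given the claim, membership in $\AC^0$ follows from $\FO = \AC^0$: the two conditions are expressed by the formulas
\begin{align*}
  \forall a \in A \, \forall q \in Q \colon q \cdot aa = q \cdot a
  \qquad\text{and}\qquad
  \forall a, b \in A \, \forall q \in Q \colon q \cdot ab = q \cdot ba,
\end{align*}
and the conjunction of two formulas of this shape is again such a formula, so the property is first-order definable in the sense above.

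The only real content is the claim, and within it the only step needing care is the reduction of an arbitrary product of generators to a product of \emph{pairwise distinct} generators. This is exactly where commutativity is essential: idempotency of the generators alone does not force $\langle a_1,\dots,a_k \rangle$ to be a band (for instance, $T_3$ is generated by idempotents but is not idempotent), whereas commutativity makes the collapse above trivial. I expect no further obstacles.
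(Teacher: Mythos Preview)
Your proposal is correct and follows essentially the same approach as the paper: reduce the semilattice test to checking that all generators are idempotent and pairwise commuting, then express these as first-order formulas. The paper's proof is terser—it simply writes down the idempotency formula and invokes the previous theorem—whereas you explicitly justify the key claim that idempotency plus commutativity of the generators forces the whole semigroup to be a band, a step the paper leaves implicit.
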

\begin{proof}
  We can use the formula
  \begin{align*}
    \forall a \in A \, \forall q \in Q \colon q \cdot a^2 = q \cdot a
  \end{align*}
  to express that every generator is idempotent. Together with the previous theorem, this yields the desired statement.
\end{proof}

Our formula for permutation groups will be based on the following lemma.

\begin{lemma}\label{lem:groups}
  Let $a_1, \dots, a_k \in T_n$. Then, $S: = \langle a_1, \dots, a_k \rangle$ is a group if and only if the following three properties hold:
  \begin{enumerate}
      \item all generators have the same image $X \subseteq Q$,
      \item all generators are permutations on $X$, and
      \item all generators have the same kernel $\ker(a_i)$.
  \end{enumerate}
\end{lemma}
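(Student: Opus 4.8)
The plan is to prove the equivalence in both directions, using the standard fact that a finite semigroup is a group if and only if it has exactly one idempotent, equivalently if and only if it is simple and contains an identity. Since $S$ is finite, another convenient characterisation is: $S$ is a group iff $S$ is both left simple and right simple, iff $xS = S = Sx$ for every $x \in S$.

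For the forward direction, suppose $S$ is a group, with identity $e$ and image $X := [n]e$. Since $e$ acts as the identity of $S$, for every $s \in S$ we have $se = s = es$. Applying these on the right of a point: $[n]s = [n](se) \subseteq [n]e = X$, and conversely $X = [n]e = [n](e s^{\omega-1} s) \subseteq [n]s$ using that $s^{\omega} = e$ because $S$ is a group (so $\omega_S$ works and the unique idempotent is $e$). Hence every element, in particular every generator, has image exactly $X$, giving (1). For (2), each generator $a_i$ maps $X$ into $X$; its inverse $a_i^{-1}$ in the group also maps $X$ into $X$, and $a_i a_i^{-1} = e = a_i^{-1} a_i$ restricts to the identity on $X$, so $a_i$ is a bijection on $X$, i.e.\ a permutation of $X$. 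For (3), from $e s = s$ we get $\ker(e) \subseteq \ker(s)$ for all $s$; but also $s = e s^{\omega - 1} s$ shows $\ker(s) \subseteq \ker(e)$, so every element, hence every generator, has kernel exactly $\ker(e)$.

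For the converse, assume (1)--(3) hold, with common image $X$ and common kernel $\kappa := \ker(a_i)$. First observe that since each generator has image $X$ and kernel $\kappa$, every product $s = a_{i_1} \cdots a_{i_m} \in S$ also has image $X$: indeed $[n]s = [n](a_{i_1}\cdots a_{i_m}) = X a_{i_2}\cdots a_{i_m} = X$ by induction, since each $a_{i_j}$ is a permutation of $X$ by (2). Likewise $\ker(s) = \kappa$ for every $s \in S$: clearly $\kappa = \ker(a_{i_1}) \subseteq \ker(s)$, and if $(p,q)\in\ker(s)$ then $p a_{i_1} \cdots a_{i_m} = q a_{i_1}\cdots a_{i_m}$; since $a_{i_2}\cdots a_{i_m}$ restricts to a bijection on $X$ and $p a_{i_1}, q a_{i_1} \in X$, we get $p a_{i_1} = q a_{i_1}$, i.e.\ $(p,q)\in\kappa$. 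Thus every $s\in S$ has image $X$ and kernel $\kappa$, so $s$ restricts to a permutation of $X$, and the restriction map $S \to \mathrm{Sym}(X)$, $s \mapsto s|_X$, is injective (two elements of $S$ agreeing on $X$ have the same values everywhere, since each is constant on $\kappa$-classes and determined by its values on $X$ — more precisely $p s = (p a_{i_1}) (a_{i_2}\cdots a_{i_m})|_X$ depends only on the $\kappa$-class of $p$ and the permutation $s|_X$). Hence $S$ embeds into the finite group $\mathrm{Sym}(X)$; being a finite subsemigroup of a group, $S$ is itself a group.

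I expect the main subtlety to be the injectivity claim in the converse, i.e.\ that an element $s \in S$ is completely determined by its restriction $s|_X$: one must argue that knowing $s$ on $X$ together with the fixed kernel $\kappa$ pins down $s$ on all of $[n]$. The cleanest way is to note that for any $p \in [n]$ and any generator $a_i$, the point $p a_i$ lies in $X$ and depends only on the $\kappa$-class of $p$; writing $s = a_{i_1} t$ with $t = a_{i_2}\cdots a_{i_m}$ (or $s = a_{i_1}$ if $m=1$), we have $p s = (p a_{i_1}) t|_X$, and $t|_X \in \mathrm{Sym}(X)$ equals $s|_X$ composed with $(a_{i_1}|_X)^{-1}$, so $s|_X$ indeed determines $p \mapsto p s$. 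Everything else is a routine induction on word length, and the forward direction only uses that in a finite group every element satisfies $s^\omega = e$.
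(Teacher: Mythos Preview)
Your proof is correct. The forward direction is essentially the same as the paper's, phrased in terms of the identity $e$ rather than via contradictions about distinct idempotents; the content is identical.

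For the converse you take a genuinely different route. The paper, after observing (as you do) that properties (1)--(3) propagate to all of $S$, picks any $s\in S$ and directly verifies that $s^{\omega}$ is a two-sided identity by checking $s^{\omega}a_i = a_i = a_i s^{\omega}$ pointwise for each generator. You instead use (1)--(3) to build an embedding $S \hookrightarrow \mathrm{Sym}(X)$ via restriction, and then invoke the general fact that a finite subsemigroup of a group is a group. Your approach is arguably more conceptual and explains \emph{why} $S$ is a group (it sits inside $\mathrm{Sym}(X)$), while the paper's is more hands-on and self-contained, needing no auxiliary fact about subsemigroups of groups. One small remark: your injectivity argument is cleanest if you simply note that, since $|X| = |[n]/\kappa|$ and distinct points of $X$ lie in distinct $\kappa$-classes (by (2)), each $\kappa$-class meets $X$ in exactly one point; hence any $s\in S$, being constant on $\kappa$-classes, is determined by $s|_X$. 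Your ``more precisely'' elaboration via a word decomposition $s = a_{i_1}t$ reaches the same conclusion but obscures this: the reason it is independent of the choice of $a_{i_1}$ is exactly that $(p a_{i_1})(a_{i_1}|_X)^{-1}$ is that unique $X$-representative of the $\kappa$-class of $p$.
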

\begin{proof}
  Suppose that $S$ is a group.
  If one of the generators $a_i$ is not a permuation on its image, then the image of $a_i^2$ is strictly smaller than the image of $a_i$, which yields $a_i^{\omega+1} \ne a_i$. Since in a group, the only idempotent element is the identity element, this is a contradiction. We may therefore assume that all generators are permutations on their images.
  Therefore, if two generators $a_i$ and $a_j$ do not have the same image, then the same holds for $a_i^\omega$ and $a_j^\omega$, contradicting the fact that a group contains a unique idempotent element.
  If there exist $p, q \in [n]$ and $i, j \in [k]$ such that $pa_i = qa_i$ but $pa_j \ne qa_j$, we have $pa_i^\omega = qa_i^\omega$ and $pa_j^\omega \ne qa_j^\omega$ using the observation that $a_j$ is a bijection on its image. This again yields two different idempotent elements.
  
  Conversely, suppose that the three properties stated above hold. By induction, all three properties then also hold for every element of $S$. Let $s \in S$. We claim that $s^\omega$ is the identity element which suffices to conclude the proof.
  To this end, we show that $s^\omega a_i = a_i = a_i s^\omega$ for all $i \in [k]$.
  Let $q \in [n]$. Clearly, since $s$ is a bijection on the image of $a_i$, the transformation $s^\omega$ is the identity on $qa_i$, thus $qa_is^\omega = qa_i$.
  Let $p = qs^\omega$. Clearly, $ps^\omega = qs^\omega$, \ie $(p, q) \in \ker(s^\omega)$. Since all elements have the same kernel, this yields $pa_i = qa_i$ and thus, $qs^\omega a_i = q a_i$.
\end{proof}

\begin{theorem}
  The problem of determining whether a transformation semigroup is a group is in $\AC^0$.
\end{theorem}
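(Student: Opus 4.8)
The plan is to translate the criterion of Lemma~\ref{lem:groups} into the restricted first-order formalism of this section and then appeal to $\FO = \AC^0$ \cite[Theorem 5.22]{IM:DC}, in the same spirit as the formulas for commutativity and for semilattices. Concretely, I would produce one subformula for each of the three conditions of the lemma (equal images, permutations on the common image, equal kernels), using only quantifiers over the generator set $A$ and over the point set $Q$ and only atomic predicates of the allowed shape $p \cdot a_1 \cdots a_k = q \cdot b_1 \cdots b_\ell$; the conjunction of the three subformulas is then a sentence of the restricted logic defining exactly the tuples that generate a group.

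For condition~(1) I would assert that the image of every generator is contained in the image of every other one, quantifying over all \emph{ordered} pairs of generators:
\[
  \forall a, b \in A \; \forall q \in Q \; \exists p \in Q \colon q \cdot a = p \cdot b .
\]
Applying this to the pair $(a,b)$ and to the pair $(b,a)$ yields mutual inclusion, hence all generators share one image $X$. For condition~(3) I would similarly write
\[
  \forall a, b \in A \; \forall p, q \in Q \colon \bigl( p \cdot a = q \cdot a \,\leftrightarrow\, p \cdot b = q \cdot b \bigr),
\]
which, imposed over all ordered pairs, says $\ker(a) = \ker(b)$ for all generators.

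Condition~(2) is where the one genuine idea is needed: the set $X$ is not a first-order object, so I cannot directly say ``$a$ restricted to $X$ is onto''. Instead I would use the elementary fact that a transformation $a$ acts as a permutation on its own image if and only if $[n]a = [n]a^2$; since $[n]a^2 \subseteq [n]a$ always holds, only the reverse inclusion has to be stated, giving
\[
  \forall a \in A \; \forall q \in Q \; \exists p \in Q \colon q \cdot a = p \cdot aa .
\]
Combined with condition~(1), which makes every $[n]a$ equal to $X$, this expresses precisely that each generator permutes $X$.

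Putting the three displayed sentences together and invoking Lemma~\ref{lem:groups} together with $\FO = \AC^0$ then gives membership in $\AC^0$. The remaining steps — verifying that ``for all ordered pairs'' really upgrades image inclusion to equality and the biconditional to kernel equality, and that the atoms above are of the permitted form — are routine, so the reformulation of condition~(2) via $[n]a = [n]a^2$ is the only point at which I expect to have to think.
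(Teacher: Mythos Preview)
Your proposal is correct and follows essentially the same approach as the paper: translate the three conditions of Lemma~\ref{lem:groups} into sentences of the restricted first-order logic and invoke $\FO = \AC^0$. The only cosmetic difference is in condition~(2), where the paper expresses ``$a$ permutes its image'' via injectivity ($pa \ne qa \rightarrow pa^2 \ne qa^2$) while you use the equivalent surjectivity statement $[n]a \subseteq [n]a^2$; both are valid on finite sets.
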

\begin{proof}
  We provide formulas for all the conditions stated in Lemma~\ref{lem:groups}.
  The first condition can be expressed by the formula
  \begin{align*}
    \forall a, b \in A \, \forall  q \in Q \colon (\exists p \in Q \colon pa = q) \,\to\, (\exists p \in Q \colon pb = q).
  \end{align*}
  The second property is expressed by
  \begin{align*}
    \forall a \in A \, \forall p, q \in Q \colon pa \ne qa \rightarrow pa^2 \ne qa^2.
  \end{align*}
  The property of matching kernels is defined by the
  \begin{align*}
    \forall a, b \in A \, \forall p, q \in Q \colon (pa = qa \,\leftrightarrow\, pb = qb).
  \end{align*}
  This suffices to show that the problem of deciding whether a transformation semigroup is a group is in $\AC^0$.
\end{proof}

\section{Locally Testable Properties} \label{LocalSection}

In this section, we investigate properties of a transformation semigroup $S$ with an underlying set $[n]$ that can be checked simply by verifying certain conditions on each vertex of the graph $\Gamma(S,[n])$.
We first investigate the problem of testing whether a transformation semigroup has a right zero.

\medskip
$\RightZerobf$
\begin{itemize}
\item Input: $a_1,\dots,a_k \in T_n$
\item Problem: Does $\langle a_1,\dots,a_k \rangle$ have a right zero?
\end{itemize}

\begin{lemma}\label{Right0} Let $k,n \in \mathbb{N}$ and let $a_1,\dots,a_k \in T_n$. An element $r$ is a right zero of $S := \langle a_1,\dots,a_k\rangle$ iff $[n] = \bigcup_{p \in [n]r} S^{-1}(p)$ is a disjoint union. \end{lemma}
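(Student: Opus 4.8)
The plan is to split the biconditional according to the two parts of its right-hand side: the set equality $[n] = \bigcup_{p \in [n]r} S^{-1}(p)$ and the disjointness of that union. The first step is to notice that the set equality is vacuous. Indeed, for every $q \in [n]$ the element $r \in S$ witnesses $q \in S^{-1}(qr)$, and $qr \in [n]r$; since also each $S^{-1}(p) \subseteq [n]$, the union always equals $[n]$, whether or not $r$ is a right zero. So the real content of the lemma is the equivalence of ``$r$ is a right zero of $S$'' with ``the sets $S^{-1}(p)$, $p \in [n]r$, are pairwise disjoint.''

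For the forward implication, suppose $r$ is a right zero. Applying the defining condition $sr = r$ with $s = r$ shows that $r$ is idempotent, and hence that $r$ acts as the identity on its own image, so $pr = p$ for every $p \in [n]r$. Now take $q \in S^{-1}(p) \cap S^{-1}(p')$ with $p, p' \in [n]r$, and choose $s, t \in S$ with $qs = p$ and $qt = p'$. Multiplying $qs = p$ on the right by $r$ and using $sr = r$ together with $pr = p$ gives $qr = p$; the analogous computation with $t$ gives $qr = p'$, so $p = p'$. Hence the sets are pairwise disjoint.

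For the converse, suppose the sets $S^{-1}(p)$, $p \in [n]r$, are pairwise disjoint. Fix $q \in [n]$ and $s \in S$. Then $r \in S$ witnesses $q \in S^{-1}(qr)$ and $sr \in S$ witnesses $q \in S^{-1}(qsr)$, while both $qr$ and $qsr$ lie in $[n]r$. Disjointness therefore forces $qr = qsr$. Since $q \in [n]$ and $s \in S$ were arbitrary, $sr = r$ for all $s \in S$, \ie $r$ is a right zero.

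I do not anticipate a genuine difficulty; the argument is a few lines once the bookkeeping is set up. The one subtlety worth flagging is that $S = \langle a_1, \dots, a_k \rangle$ need not contain the identity transformation, so in both directions one must use $r$ itself (or the product $sr$) as the explicit element of $S$ placing a point into the relevant set $S^{-1}(\cdot)$; invoking ``the identity'' as a witness would be unjustified. A secondary point is remembering to use the idempotence of $r$ in the forward direction so that $pr = p$ holds throughout $[n]r$.
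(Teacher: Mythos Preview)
Your proof is correct and follows essentially the same approach as the paper's. Both directions use the same key observations: idempotence of $r$ (so that $r$ fixes its image) together with $sr=r$ in the forward direction, and the witnesses $r$ and $sr$ placing $q$ into $S^{-1}(qr)$ and $S^{-1}(qsr)$ in the converse. Your explicit remark that the covering equality $[n]=\bigcup_{p\in[n]r}S^{-1}(p)$ holds automatically is a helpful clarification the paper leaves implicit.
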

\begin{proof}
For the forward direction, assume $r$ is a right zero. Note that $r^2 = r$. Hence, $r$ fixes its images.
Pick any $q \in [n]$ and $s,t \in S$ such that $qs,qt \in [n]r$. Then $qs = qsr = qr = qtr = qt$ and thus $[n] = \bigcup_{p \in [n]r} S^{-1}(p)$ is a disjoint union.

Conversely, assume $S$ has an element, $r$, such that $[n] = \bigcup_{p \in [n]r} S^{-1}(p)$ is disjoint.
Pick any $q \in [n]$ and any $s \in S$. Then $q \in S^{-1}(qr)$ and $q \in S^{-1}(qsr)$. But since the union is disjoint, this means $qr = qsr$. Thus, $r$ is a right zero.
\end{proof}

\begin{lemma}\label{Right0Graph}
  Let $k,n \in \mathbb{N}$ and let $a_1,\dots,a_k \in T_n$. Then $S := \langle a_1,\dots,a_k\rangle$ has a right zero iff for every pair $p,q \in [n]$ that are in the same connected component of $\Gamma(\{a_1,\dots,a_k\},[n])$, there is some $s \in S$ such that $ps = qs$.
\end{lemma}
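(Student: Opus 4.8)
The plan is to reduce to Lemma~\ref{Right0}, which characterizes the existence of a right zero via the condition that $[n] = \bigcup_{p \in [n]r} S^{-1}(p)$ is a disjoint union for some $r \in S$. The key observation is that this disjointness condition has a purely "combinatorial" restatement: the map sending each $q \in [n]$ to the unique class containing it factors through the transformation graph, in the sense that two points in the same connected component of $\Gamma(\{a_1,\dots,a_k\},[n])$ can always be "synchronized" by some element of $S$ if and only if a right zero exists.

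First I would prove the forward direction. Suppose $S$ has a right zero $r$. Let $p, q \in [n]$ lie in the same connected component of $\Gamma := \Gamma(\{a_1,\dots,a_k\},[n])$. By Lemma~\ref{Right0}, $[n] = \bigcup_{x \in [n]r} S^{-1}(x)$ is a disjoint union, so every point of $[n]$ lies in exactly one block $S^{-1}(x)$. Since edges of $\Gamma$ correspond to applying a generator $a_i$, and $r$ is a right zero (so $a_i r = r$ on the relevant level, i.e.\ $y a_i r = y r$ for all $y$), following an edge $y \to y a_i$ does not change the value $y \mapsto y r$: indeed $(y a_i) r = y (a_i r) = y r$. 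Hence $y \mapsto yr$ is constant on each connected component of $\Gamma$, so $pr = qr$, and we may take $s = r$.

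For the converse, assume that for every pair $p,q$ in the same connected component of $\Gamma$ there is some $s \in S$ with $ps = qs$. I want to produce an element $r$ with the disjoint-union property of Lemma~\ref{Right0}. The natural candidate is a sufficiently high power of a product of all generators, or more carefully: I will argue that the equivalence relation $\theta$ on $[n]$ defined by "$p \mathrel{\theta} q$ iff $ps = qs$ for some $s \in S$" — note this needs an argument that it is transitive, which is where the hypothesis is used together with the fact that if $ps = qs$ then $p$ and $q$ feed into the same point and stay synchronized — actually coincides with the connected-component relation of $\Gamma$ restricted appropriately. Then, picking for each component a synchronizing element and composing them (the order does not matter once points are merged, since further multiplication preserves being merged), we obtain a single $r \in S$ that collapses each connected component of $\Gamma$ to a point; iterating to an idempotent power $r^{\omega}$ if necessary, one checks $[n] = \bigcup_{x \in [n]r^{\omega}} S^{-1}(x)$ is disjoint, and Lemma~\ref{Right0} finishes the proof.

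The main obstacle I anticipate is the converse direction, specifically showing that finitely many pairwise-synchronizing elements can be combined into a single element $r$ that simultaneously synchronizes \emph{all} points within each connected component. The subtlety is that applying $s$ to synchronize $p,q$ might split some other pair that was previously together; one must verify that this cannot happen — i.e.\ that once $p s_1 = q s_1$, we also have $p s_1 t = q s_1 t$ for any $t$, so synchronizations are never undone — and then chain the synchronizing elements along a spanning structure of each component. Handling connectivity versus strong connectivity of $\Gamma$ correctly (edges are directed) is the delicate point, but since $p s = q s$ is symmetric in $p,q$ and closed under right multiplication, merging propagates along undirected paths, which is exactly what "same connected component" provides.
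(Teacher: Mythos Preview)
Your forward direction is essentially the paper's argument: a right zero $r$ satisfies $a_i r = r$, so $y \mapsto yr$ is constant along edges of $\Gamma$ and hence on connected components, giving $pr = qr$.

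Your converse is correct but takes a genuinely different route from the paper. You propose to chain synchronizing elements: inductively collapse each connected component to a point by composing pairwise synchronizers (using that synchronizations are never undone by further right multiplication, and that every $s \in S$ maps each component into itself, so the images one needs to merge at later stages are still in the same component and the hypothesis applies again), then compose across components. This works, but it requires a small induction and some bookkeeping about how components behave under the intermediate elements. The paper instead picks any $w \in S$ of \emph{minimal image size} and observes that for $p,q$ in the same component, the hypothesized $s$ with $pws = qws$ cannot shrink the image further, so $ws$ and $w$ have the same image size, forcing $w$ to be injective on $\{pw,qw\}$ and hence $pw = qw$ already. Thus $w$ itself collapses every component to a point, and Lemma~\ref{Right0} applies directly to $w$. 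The minimal-image trick replaces your entire chaining construction with a one-line cardinality argument; your approach, on the other hand, is more constructive and would actually exhibit the right zero as an explicit product of synchronizers.
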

\begin{proof}
  For the forward direction, assume $r$ is a right zero of $S$. Then, by Lemma \ref{Right0}, $[n] = \bigcup_{q' \in [n]r} S^{-1}(q')$ is a disjoint union. Thus, the $S^{-1}(q')$ sets are the vertex sets of the connected components of $\Gamma(\{a_1,\dots,a_k\},[n])$. Moreover, for each $p,q \in S^{-1}(q')$, $pr = q' = qr$.

  For the converse, let $w \in S$ have minimal image size. Pick any $p,q \in [n]$ in the same connected component $V$.
  Note that $pw,qw \in V$. By assumption, then, there is some $s \in S$ such that $pws = qws$. Since $|[n]w|$ is minimal, then $|[n]w| = |[n]ws|$ and hence $pw = qw$. That is, $|Vw|=1$ for each connected component $V$. Thus, $[n] = \bigcup_{q' \in [n]w}S^{-1}(q')$ is disjoint. Therefore, by Lemma \ref{Right0}, $w$ is a right zero.
\end{proof}

\begin{lemma}\label{lem:right-zero-nl}$\RightZero$ is in $\NL$.\end{lemma}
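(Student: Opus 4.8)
The plan is to use the characterization from Lemma~\ref{Right0Graph}: $S$ has a right zero if and only if every pair of points $p, q$ lying in the same connected component of the transformation graph $\Gamma(\{a_1, \dots, a_k\}, [n])$ can be ``merged'' by some $s \in S$, meaning $ps = qs$. Since $\NL$ is closed under complement by the Immerman--Szelep\-cs\'enyi theorem, it suffices to show that the complement problem is in $\NL$: we want to nondeterministically detect a \emph{witness pair} $p, q$ that are in the same connected component but for which $ps \ne qs$ for all $s \in S$.

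First I would observe that ``$p$ and $q$ are in the same connected component of $\Gamma(\{a_1,\dots,a_k\},[n])$'' is decidable in $\NL$: this is just undirected (or rather, underlying-undirected) reachability in a graph whose edges $(u,v)$ are certified by exhibiting a generator index $i$ with $ua_i = v$; each such edge check uses only logarithmic space, and $\NL$ reachability handles the rest. Second, the condition ``$ps \ne qs$ for all $s \in S$'' is exactly the statement $(p,q) \notin \ker(S)$, and $\ker(S) = \ker(\{a_1,\dots,a_k\})$ is the reflexive--transitive--symmetric-free intersection of the generator kernels — in fact $\ker(S)$ is just the set of pairs reachable from the diagonal under the action of $S$ on $[n]^2$. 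So ``$(p,q) \in \ker(S)$'' is again an $\NL$ reachability question in the graph on vertex set $[n]^2$ where $(u,v) \to (ua_i, va_i)$: $(p,q) \in \ker(S)$ iff $(p,q)$ reaches some diagonal pair $(x,x)$ reading the generators backwards, equivalently iff some diagonal pair reaches nothing — more simply, iff $ps = qs$ for some word $s$, which is a path from $(p,q)$ to the diagonal. Hence its complement, ``$(p,q) \notin \ker(S)$'', is a $\co\NL = \NL$ predicate.

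Putting these together, the complement of $\RightZero$ is: ``there exist $p, q \in [n]$ such that $p,q$ are in the same connected component of $\Gamma$ \emph{and} $(p,q) \notin \ker(S)$.'' This is an existential guess of two points of $[n]$ (logarithmic space) followed by the conjunction of an $\NL$ predicate and an $\NL$ predicate; since $\NL$ is closed under conjunction and under existential guessing over polynomially-many objects, the complement is in $\NL$, and therefore $\RightZero \in \co\NL = \NL$. I would present the argument by first isolating the two reachability subroutines as the ``moving parts,'' then assembling them.

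The main obstacle I expect is purely a matter of bookkeeping rather than mathematical depth: one must be careful that the graph $\Gamma$ is in general directed, so ``same connected component'' should be interpreted as same weakly connected component (which is what Lemma~\ref{Right0Graph}'s proof actually uses, since $pw, qw$ land in the component containing $p$ and $q$ via edges traversed forwards); checking weak connectivity is still undirected reachability in the symmetrized edge relation, which remains in $\NL$ (indeed even in $\LOGSPACE$ by Reingold, but $\NL$ suffices). The only other subtlety is confirming that $\ker(S)$ really equals $\ker(\{a_1,\dots,a_k\})$ so that we need not range over all of the exponentially large $S$ — but this is recorded in the preliminaries, and operationally we just search for a path $(p,q) \rightsquigarrow (x,x)$ under generator actions, whose length is polynomially bounded as it visits distinct pairs in $[n]^2$.
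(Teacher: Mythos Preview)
Your overall strategy is sound and lands in the same place as the paper, but there is a notational slip you should fix. You write that ``$ps \ne qs$ for all $s \in S$'' is the statement $(p,q) \notin \ker(S)$; this is false under the paper's definition $\ker(S) = \{(p,q) : ps = qs \text{ for all } s\}$, whose negation says there \emph{exists} $s$ with $ps \ne qs$. The set you actually need is $\{(p,q) : \exists s \in S,\ ps = qs\}$, i.e.\ the pairs that can reach the diagonal in the graph on $[n]^2$ with edges $(u,v) \to (ua_i, va_i)$. You do describe exactly this reachability test a few lines later (``a path from $(p,q)$ to the diagonal''), so the algorithmic content is correct once the mislabeling is removed. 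Your aside that $\ker(S)$ is ``the set of pairs reachable from the diagonal'' is also off: the diagonal is forward-invariant under the componentwise action, so nothing off-diagonal is reachable from it.

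As for the route taken, the paper does not pass through the complement. It gives a direct $\NL$ machine: deterministically iterate over all pairs $(p,q) \in [n]^2$, test same-component membership in $\LOGSPACE$ via Reingold's algorithm, and for each connected pair nondeterministically guess generators one at a time until the pair collapses. Your version instead shows the complement is in $\NL$ (guess a bad pair, verify connectivity, verify non-collapsibility via $\co\NL = \NL$) and then invokes Immerman--Szelepcs\'enyi. Both arguments rest on Lemma~\ref{Right0Graph} and on the same two reachability primitives; the paper just folds the universal quantifier over pairs into a deterministic loop, avoiding the explicit appeal to $\NL = \co\NL$.
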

\begin{proof}
  By Lemma \ref{Right0Graph}, we need only show every pair of vertices from the same connected component can be collapsed by some semigroup element to a common image. First, compute the edges of $\Gamma(\{a_1,\dots,a_k\},[n])$ with a log-space transducer. Then for each pair $(p,q) \in [n]$, use a log-space algorithm for undirected graph connectivity \cite{RE:UCL} to determine if $p$ and $q$ are in the same connected component. If not, proceed to the next pair. If so, guess generators for an $s \in S$ such that $ps = qs$.
\end{proof}

Similarly, one can ask whether a semigroup has a left zero.
Due to lack of left-right symmetry in transformation semigroups, this requires a different approach.

\medskip
$\LeftZerobf$
\begin{itemize}
\item Input: $a_1,\dots,a_k \in T_n$
\item Problem: Does $\langle a_1,\dots,a_k \rangle$ have a left zero?
\end{itemize}

\begin{lemma}\label{Left0} Let $k,n \in \mathbb{N}$. Let $a_1,\dots,a_k \in T_n$ and $S := \langle a_1,\dots,a_k\rangle$. Then, $S$ contains a left zero iff for each $q\in [n]$, there exists $s \in S$ such that $qs \in \Fix(S,[n])$. \end{lemma}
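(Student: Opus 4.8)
The plan is to characterize when $S$ contains a left zero in terms of the action of $S$ on $[n]$. Recall that an element $\ell \in S$ is a left zero precisely when $\ell s = \ell$ for all $s \in S$, which (reading the transformations as acting on the right of points) means $q\ell s = q\ell$ for every $q \in [n]$ and every $s \in S$; equivalently, every point in the image $[n]\ell$ is fixed by every element of $S$, i.e.\ $[n]\ell \subseteq \Fix(S,[n])$. So the lemma reduces to: $S$ has an element whose image lies entirely inside $\Fix(S,[n])$ if and only if for each $q \in [n]$ there is some $s \in S$ with $qs \in \Fix(S,[n])$.

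For the forward direction, if $\ell$ is a left zero then, for any $q \in [n]$, the point $q\ell$ lies in $[n]\ell \subseteq \Fix(S,[n])$, so $s = \ell$ works for every $q$ simultaneously. The converse is the substantive direction. Here I would argue by accumulating partial solutions: for each $q \in [n]$ pick $s_q \in S$ with $qs_q \in \Fix(S,[n])$, and then take the product $\ell = \prod_{q \in [n]} s_q$ (in any fixed order). The key point is that once a point has been mapped into $\Fix(S,[n])$, no subsequent multiplication can move it out, since every element of $S$ fixes every point of $\Fix(S,[n])$. Hence for each $q$, whichever factor $s_q$ appears in the product, the image of $q$ under the whole product has already landed in $\Fix(S,[n])$ by the time $s_q$ is applied and stays there; therefore $[n]\ell \subseteq \Fix(S,[n])$, and by the observation above $\ell$ is a left zero.

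The main obstacle — really the only thing to be careful about — is the bookkeeping in the converse: one must be sure that the order of the product does not matter and that "passing through" $\Fix(S,[n])$ is irreversible. This follows cleanly from the fact that $\Fix(S,[n])$ is setwise fixed pointwise by all of $S$: if $p \in \Fix(S,[n])$ then $ps = p$ for all $s \in S$, so once $qs' \in \Fix(S,[n])$ for some prefix $s'$ of the product, $qs' t = qs'$ for the remaining suffix $t$, giving $q\ell = qs' \in \Fix(S,[n])$. I would also note in passing that this shows the left zero, when it exists, can be taken to be a product of at most $n$ generating-sequence blocks, though the lemma as stated only asserts existence.
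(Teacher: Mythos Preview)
Your forward direction is fine and matches the paper. The converse, however, has a genuine gap in the product construction. You pick for each $q$ an element $s_q$ with $qs_q \in \Fix(S,[n])$ and set $\ell = s_1 s_2 \cdots s_n$, then claim that ``by the time $s_q$ is applied'' the point $q$ has landed in $\Fix(S,[n])$. But in the product, the factor $s_q$ acts on $q s_1 \cdots s_{q-1}$, not on $q$; your hypothesis only guarantees $qs_q \in \Fix(S,[n])$, not $ps_q \in \Fix(S,[n])$ for an arbitrary $p$. So there is no reason why $(q s_1 \cdots s_{q-1}) s_q$ should lie in $\Fix(S,[n])$, and the absorbing property of $\Fix(S,[n])$ never gets a chance to kick in for the point $q$.

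The idea is easily salvaged by choosing the factors adaptively: having built a partial product $t$ that sends $1,\dots,i-1$ into $\Fix(S,[n])$, look at $p := i\,t$, append $s_p$, and note that now $i\,(t s_p) = p\,s_p \in \Fix(S,[n])$ while the earlier points stay put. After $n$ such steps you have a left zero. The paper takes a different and slightly slicker route: it picks any $s \in S$ with $\abs{[n]s}$ minimal and argues by contradiction that $[n]s = \Fix(S,[n])$, since a point of $[n]s$ outside $\Fix(S,[n])$ could be pushed into $\Fix(S,[n])$ by some $t$, strictly shrinking the image (the fixed points are already in $[n]s$ and stay fixed under $t$). Both approaches use exactly the absorbing property you identified; the paper's avoids the inductive bookkeeping.
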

\begin{proof}
For the forward direction, assume $\ell \in S$ is a left zero. Then for any $q \in [n]$ and any $s \in S$, $(q \ell)s = q\ell$. Thus, $q\ell \in \Fix(S,[n])$.

Conversely, assume that for any $q \in [n]$, there exists $s \in S$ such that $qs \in \Fix(S,[n])$. Let $s \in S$ be an element such that $[n]s$ is of minimal size. Note that $\Fix(S,[n]) \subseteq [n]s$. Assume for contradiction there is $q \in [n]s \setminus \Fix(S,[n])$. We know there is an $t \in S$ such that $qt \in \Fix(S,[n])$. Since $\Fix(S,[n])t = \Fix(S,[n])$, then $|[n]st| < |[n]s|$, a contradiction. Therefore, $[n]s = \Fix(S,[n])$. Consequently, $(qs)t = qs$ for every $t \in S$ and thus $s$ is a left zero.
\end{proof}

\begin{lemma}\label{lem:left-zero-nl} $\LeftZero$ is in $\NL$. \end{lemma}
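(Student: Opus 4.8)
The plan is to reduce $\LeftZero$ to a polynomial-size instance of reachability. By Lemma~\ref{Left0}, $S$ has a left zero if and only if for every $q \in [n]$ there exists $s \in S$ with $qs \in \Fix(S,[n])$. So the algorithm has two phases. First I would compute $\Fix(S,[n])$: a point $p$ lies in $\Fix(S,[n])$ iff $pa_i = p$ for every generator $a_i$, which is checkable directly in logarithmic space (indeed in $\AC^0$). Second, for each $q \in [n]$ I would nondeterministically verify that some product of generators maps $q$ into $\Fix(S,[n])$. This is exactly a reachability question in the transformation graph $\Gamma(\{a_1,\dots,a_k\},[n])$: starting from $q$, follow edges (each edge corresponds to applying one generator) and check whether a vertex of $\Fix(S,[n])$ is reachable. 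Since $\Fix(S,[n])$ is $t$-invariant for every $t \in S$, once we reach a fixed point we stay in the fixed-point set, so reachability of the set is the right condition.

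More concretely, the $\NL$ machine works as follows. Using a log-space transducer, produce the edge relation of $\Gamma(\{a_1,\dots,a_k\},[n])$ (an edge $(p,p')$ whenever $pa_i = p'$ for some $i$); this is routine since each generator is part of the input. Then loop over all $q \in [n]$ (a log-space counter). For each $q$, run the standard $\NL$ algorithm for directed $s$-$t$ reachability, but with the target being the set $\Fix(S,[n])$ rather than a single vertex: guess a path of length at most $n$ out of $q$, at each step guessing a generator index and updating the current vertex, and accept this $q$ as soon as the current vertex satisfies the (log-space checkable) predicate ``lies in $\Fix(S,[n])$''. If the reachability search for some $q$ fails, reject; if all $q$ succeed, accept. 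The whole computation uses only logarithmic space: a constant number of point-indices, a generator index, a step counter bounded by $n$, and the outer loop counter.

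Correctness is immediate from Lemma~\ref{Left0}: the machine accepts iff for every $q \in [n]$ there is a word $w$ over the generators with $qw \in \Fix(S,[n])$, and any such $w$ represents an element $s = w \in S$ with $qs \in \Fix(S,[n])$; conversely any witnessing $s \in S$ is represented by some generator word, which gives a path in $\Gamma$. Note the length bound $n$ on guessed paths is harmless: if a fixed point is reachable from $q$ at all, it is reachable by a simple path of length at most $n-1$.

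I do not expect a serious obstacle here; the only subtlety is noticing that Lemma~\ref{Left0} turns the problem into plain reachability-to-a-set rather than something requiring the ``collapse a pair'' guessing used for $\RightZero$ in Lemma~\ref{lem:right-zero-nl}, and that $\NL = \co\NL$ (Immerman--Szelep\'{e}nyi) is not even needed since the universal quantifier over $q \in [n]$ is over an explicitly enumerable polynomial-size set and each inner test is a positive reachability query. Membership in $\NL$ follows.
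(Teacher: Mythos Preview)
Your proof is correct and follows essentially the same approach as the paper: both invoke Lemma~\ref{Left0} to reduce the question to checking, for each $q \in [n]$, that a fixed point is reachable from $q$ in the transformation graph, and then carry out this check by iterating over $q$ and nondeterministically guessing a sequence of generators. Your presentation adds more detail (an explicit path-length bound, the observation that $\NL = \co\NL$ is not needed because the universal quantifier ranges over an explicitly enumerable set), but the underlying algorithm is identical to the paper's.
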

\begin{proof}
By Lemma \ref{Left0}, we need only verify that for each $q \in [n]$, there is an $s \in S$ such that $qs \in \Fix(S,[n])$. Iterate through $q \in [n]$, initializing $p := q$. Then, repeatedly guess generators $a_i \in \{a_1,\dots,a_k\}$ and let $p = p a_i$. Non-deterministically stop guessing generators and check that $p \in \Fix(S,[n])$ by iterating through $i \in [k]$ and verifying that $pa_i = p$.
\end{proof}

We also consider the problem of deciding whether a semigroup contains a zero.

\medskip
$\Zerobf$
\begin{itemize}
\item Input: $a_1,\dots,a_k \in T_n$
\item Problem: Does $\langle a_1,\dots,a_k \rangle$ have a zero?
\end{itemize}

We now prove that $\LeftZero$, $\RightZero$, and $\Zero$ are all $\NL$-hard by a reduction of the following problem known to be $\NL$-complete \cite[Theorem 26]{JO:SB}.

\medskip
$\DFAEmpbf$
\begin{itemize}
\item Input: A deterministic finite automaton (DFA) $\autA$ over an alphabet $\Sigma$.
\item Problem: Is there $w \in \Sigma^*$ that is accepted by $\autA$?
\end{itemize}

\begin{theorem} \label{thm:zero}
$\LeftZero$, $\RightZero$, and $\Zero$ are $\NL$-complete.
\end{theorem}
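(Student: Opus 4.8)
The plan is to establish membership in $\NL$ and $\NL$-hardness separately. Membership for $\LeftZero$ and $\RightZero$ is already done in Lemmas~\ref{lem:left-zero-nl} and~\ref{lem:right-zero-nl}; for $\Zero$ it follows from the observation in Section~\ref{NotationSection} that a semigroup has a zero if and only if it has both a left zero and a right zero, so one runs both $\NL$ procedures and accepts only if both accept (using that $\NL$ is closed under intersection). Since $\NL = \co\NL$, this is clean. So the real content is hardness, which we obtain by a single reduction from $\DFAEmp$.

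The idea for hardness is to encode a DFA $\autA = (Q, \Sigma, \delta, q_0, F)$ as a transformation semigroup acting on a set closely related to $Q$. First I would, without loss of generality, trim $\autA$ so that every state is reachable from $q_0$ (this is a logspace preprocessing step: compute reachability from $q_0$ and delete unreachable states, which does not change the accepted language or its nonemptiness). Then take as underlying set $[n] = Q \cup \{\star\}$ for a fresh point $\star$, and for each letter $a \in \Sigma$ introduce a generator $\hat a$ that acts as $\delta(\cdot, a)$ on $Q$ and fixes $\star$. Additionally introduce one ``collapsing'' generator $c$ that sends every accepting state of $F$ (and $\star$) to $\star$ and fixes every non-accepting state, together with a generator $z_0$ that sends every state to $q_0$ — or some variant thereof, engineered so that the semigroup $S = \langle \hat a : a \in \Sigma\rangle \cup \{c, z_0, \dots\}$ contains a (left/right) zero precisely when some word drives $q_0$ into $F$. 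The cleanest target is probably to aim for a \emph{constant} map as the zero: if $L(\autA) \ne \emptyset$, pick an accepting word $w$; then the composite $z_0 \,\widehat w\, c$ should be the constant map onto $\star$, which is a zero of $T_n$ and hence of $S$ provided $S \subseteq$ (the submonoid of) maps we have not spoiled. Conversely, if $L(\autA) = \emptyset$, no product of generators can ever reach $\star$ from a state in $Q$ except trivially, so no constant-to-$\star$ map arises, and one checks that no \emph{other} element can be a zero either (e.g.\ because the $\hat a$ are injective enough, or because $q_0$ and some other reachable state are never collapsed).

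The main obstacle I anticipate is making the converse airtight: ruling out a left zero, a right zero, \emph{and} a zero all at once when $L(\autA)=\emptyset$, since the three notions behave differently (cf.\ the asymmetry exploited in Lemmas~\ref{Right0Graph} and~\ref{Left0}). A left zero must be a constant map (its image is a single $S$-fixed point), so it is enough to ensure that when $L(\autA) = \emptyset$ the set $\Fix(S,[n])$ is empty or unreachable via Lemma~\ref{Left0}; a right zero need not be constant, so for that direction I would lean on Lemma~\ref{Right0Graph} and arrange the transformation graph and the collapsing behaviour of the generators so that two points in a common connected component can never be merged unless an accepting computation exists. It may be necessary to use \emph{two} separate but structurally similar reductions — one tuned for $\LeftZero$/$\Zero$ and one for $\RightZero$ — rather than forcing a single construction to serve all three; since $\DFAEmp$ is $\NL$-complete and $\NL = \co\NL$, either route yields $\NL$-completeness of all three problems. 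All gadgets are plainly logspace-constructible (the generators are listed explicitly by their action tables), so the reduction bound is immediate once correctness is settled.
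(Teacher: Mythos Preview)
Your construction is exactly the paper's: a sink $\star$ (their $n+1$), the extended letter actions $\hat a$, the reset generator $z_0$ (their $b$, sending every state to $q_0$ and fixing $\star$), and the collapsing generator $c$ (sending $F$ and $\star$ to $\star$, fixing the rest). Membership is handled identically.

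Where you diverge is in the hedging. You raise the possibility of two separate reductions, one for $\LeftZero$ and one for $\RightZero$; the paper shows this is unnecessary. After proving that $S$ has a \emph{zero} iff $L(\autA)\neq\emptyset$, it observes that in this particular $S$ the three problems coincide: the generator $z_0$ gives every point of $[n]$ an edge to $q_0$, and $c$ links $F$ to $\star$, so $\Gamma(\{a_1,\dots,a_k,b,c\},[n+1])$ has a single connected component; hence by Lemma~\ref{Right0} any right zero has a one-point image, and since $\Fix(S,[n+1])=\{\star\}$ that image must be $\{\star\}$. Likewise any left zero has image contained in $\Fix(S,[n+1])=\{\star\}$. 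So a left zero, a right zero, and a zero all exist iff the constant-$\star$ map lies in $S$, and one reduction settles all three hardness claims at once. Your trimming preprocessing is harmless but not used in the paper.

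One slip to fix: your parenthetical ``a left zero must be a constant map (its image is a single $S$-fixed point)'' is false in general. A left zero $\ell$ satisfies $\ell s=\ell$ for all $s$, so $[n]\ell\subseteq\Fix(S,[n])$, but $\Fix(S,[n])$ can be large and $\ell$ need not be constant. In the construction at hand the conclusion is correct only because $\Fix(S,[n+1])$ happens to be the singleton $\{\star\}$; your subsequent appeal to Lemma~\ref{Left0} is the right tool, but the reasoning leading to it should be stated in terms of $\Fix$ rather than constancy.
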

\begin{proof}
A semigroup contains a zero element iff it contains both a left zero and a right zero. Then $\Zero$ is in $\NL$ as a corollary to Lemmas~\ref{lem:left-zero-nl} and \ref{lem:right-zero-nl}. It remains to prove that all three problems are $\NL$-hard.

Given a DFA with state set $[n]$, initial state $q_o \in [n]$, accepting states $F \subseteq [n]$, and transformations $a_1,\dots,a_k \in T_n$, we construct a subsemigroup of $T_{n+1}$ as follows. First extend $a_1,\dots,a_k$ by defining them to fix $n+1$. Then define the following new transformations:

$$xb := \begin{cases}
q_0 & \text{if }q \in [n],\\
n+1 & \text{if }q = n+1.
\end{cases}
\hspace{1cm}
xc := \begin{cases}
q & \text{if }q \in [n+1] \setminus F,\\
n+1 & \text{if }q \in F \text{ or } q = n+1.
\end{cases}
$$

We claim $S = \langle a_1,\dots,a_k,b,c \rangle$ has a zero iff the language of the DFA is nonempty; that is, there exists $w \in \{a_1,\dots,a_k\}^*$ such that $q_0w \in F$.

Assume there exists $w \in \{a_1,\dots,a_k\}^*$ such that $q_0w \in F$. Then $[n+1]bwc = \{n+1\}$ and thus $S$ has a zero element. Conversely, assume $S$ has a zero element $0 \in S$. Since $n+1$ is the only point fixed by all transformations and $(q_00)s = q_00$ for every $s \in S$, then $q_00 = n+1$. The only transformation that sends something from $[n]$ to $n+1$ is $c$, which only sends elements of $F$ to $n+1$. So, $0 = w_1cw_2$ where $w_1,w_2 \in S$, $q_0w_1 \in F$, and $w_1 \in {a_1,\dots,a_k,b}^*$. Because $[n]b = \{q_0\}$, then either (case 1) $w_1 \in \{a_1,\dots,a_k\}^*$ or (case 2) $w_1 = w_3bw_4$ where $w_4 \in \{a_1,\dots,a_k\}^*$ and $q_0w_4 \in F$. In either case, we have a word from $\{a_1,\dots,a_k\}^*$ that is accepted by the DFA.

Finally, note that $\Gamma(\{a_1,\dots,a_k,b,c\},[n+1])$ has a single connected component. By Lemma \ref{Right0}, any right zero $r \in S$ must have an image size of a single point. Since $\Fix(S,[n+1]) = \{n+1\}$, the only right zero of $S$ would be the zero element, if it exists. Similarly, since the images of left zeroes are fixed points, the only left zero of $S$ would be the zero element, if it exists.
\end{proof}

We define the following problem and show that it is in $\NL$:

\medskip
$\NilSgpbf$
\begin{itemize}
\item Input: $a_1,\dots,a_k \in T_n$
\item Problem: Is $\langle a_1,\dots,a_k \rangle$ nilpotent?
\end{itemize}

\begin{lemma}\label{NilLemma} Let $S:=\langle a_1,\dots,a_k\rangle \leq T_n$. The following properties are equivalent:
\begin{enumerate}
\item $S$ is nilpotent;
\item $S$ has a zero element, $0$, and $\Gamma(\{a_1,\dots,a_k\},[n] \setminus [n]0)$ is acyclic;
\item $S$ is $n$-nilpotent.
\end{enumerate}
\end{lemma}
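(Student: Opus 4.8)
The plan is to prove the three-way equivalence by a cycle of implications $(3) \Rightarrow (1) \Rightarrow (2) \Rightarrow (3)$, where the genuinely new content is the implication $(2) \Rightarrow (3)$; the others are essentially bookkeeping. The implication $(3) \Rightarrow (1)$ is immediate from the definition of nilpotency. For $(1) \Rightarrow (2)$: if $S$ is $d$-nilpotent then $S^d = \{0\}$, so in particular $S$ has a zero. Since $0 \in S^d$ and $S$ acts on $[n]$, the image $[n]0$ is a set of points that every element of $S$ collapses everything into (more precisely, $[n]s \subseteq$ the range of $0$ need not hold literally, but $q \cdot 0 = q \cdot 0 \cdot s$ forces $[n]0 \subseteq \Fix(S,[n])$, and conversely $\Fix(S,[n]) \subseteq [n]0$ as in Lemma~\ref{Left0}, so $[n]0 = \Fix(S,[n])$). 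I then claim $\Gamma(\{a_1,\dots,a_k\},[n]\setminus[n]0)$ is acyclic: if there were a cycle through some point $p \notin [n]0$, there would be a word $w$ in the generators with $pw = p$ and $w$ of positive length, hence $p w^m = p$ for all $m$, so $p \cdot w^{\omega d} = p$ even though $w^{\omega d} \in S^d = \{0\}$ would force $pw^{\omega d} \in [n]0$, a contradiction since $p \notin [n]0$. (One must be slightly careful that traversing an edge of $\Gamma$ corresponds to applying a \emph{single} generator and that a cycle of length $\ell$ gives a word of length $\ell$; this is routine.)

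The heart of the argument is $(2) \Rightarrow (3)$: assuming $S$ has a zero $0$ with $Z := [n]0 = \Fix(S,[n])$ and $\Gamma(\{a_1,\dots,a_k\},[n]\setminus Z)$ acyclic, I want $S^n = \{0\}$, i.e.\ every product of $n$ generators equals $0$, i.e.\ sends every point of $[n]$ into $Z$. Fix any point $q \in [n]$ and any generators $b_1,\dots,b_n \in \{a_1,\dots,a_k\}$, and consider the walk $q,\ qb_1,\ qb_1b_2,\ \dots,\ qb_1\cdots b_n$ of length $n$ in $\Gamma$. This walk visits $n+1$ points of $[n]$. If the walk ever enters $Z$, it stays in $Z$ forever after (since $Z$ is fixed pointwise, hence in particular $Z$-invariant), so $qb_1\cdots b_n \in Z$ and we are done for that point. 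If the walk never enters $Z$, then all $n+1$ visited points lie in $[n]\setminus Z$, which has at most $n$ elements — wait, that bound is not quite tight, so the cleaner statement is: the walk lies entirely in the acyclic graph $\Gamma(\{a_1,\dots,a_k\},[n]\setminus Z)$, and a directed walk in a finite acyclic graph cannot repeat a vertex, so a walk of length $n$ visits $n+1$ distinct vertices of $[n]\setminus Z \subsetneq [n]$, which is impossible since $|[n]\setminus Z| \le n-1$ (as $Z \neq \emptyset$). Hence the walk must enter $Z$, and $qb_1\cdots b_n \in Z \subseteq \Fix(S,[n])$. Since this holds for every $q$ and every choice of $n$ generators, every element of $S^n$ maps $[n]$ into $\Fix(S,[n])$ and fixes $Z$ pointwise; two such elements agree on all of $[n]$, so $S^n$ is a single element, necessarily $0$. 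Thus $S$ is $n$-nilpotent.

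The main obstacle I anticipate is purely a matter of getting the correspondence between walks in $\Gamma$ and words in the generators exactly right, together with the edge case $Z = \emptyset$: if $S$ has \emph{no} zero then (2) fails vacuously and there is nothing to prove, but once a zero exists one must confirm $Z$ is nonempty (it contains the unique idempotent range, and indeed $0 = 0^2$ forces $[n]0$ fixed pointwise by $0$, and then by every element since $q\cdot 0\cdot s = q\cdot 0$ for all $s$), so $|[n]\setminus Z| \le n-1$ and the pigeonhole step goes through. I do not expect any difficulty beyond this; the acyclicity hypothesis does exactly the work one needs, and the bound $d = n$ (rather than something larger) falls out because the longest simple directed path in an acyclic graph on $\le n-1$ vertices has fewer than $n$ edges.
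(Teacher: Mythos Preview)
Your proof is correct and follows the same cycle of implications as the paper. The arguments for $(3)\Rightarrow(1)$ and $(1)\Rightarrow(2)$ are essentially identical to the paper's (the paper phrases $(1)\Rightarrow(2)$ as a contrapositive but it is the same cycle-gives-fixed-point argument).

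For $(2)\Rightarrow(3)$ there is a small but genuine difference in execution. The paper invokes Lemma~\ref{Right0} to conclude that the connected components of $\Gamma(\{a_1,\dots,a_k\},[n])$ are exactly the sets $S^{-1}(q)$ for $q\in[n]0$, and then argues via the diameter of that graph that any product of $d\le n$ generators sends each $p$ to $p0$. You instead work directly with the acyclic subgraph on $[n]\setminus Z$: a walk of length $n$ in an acyclic graph on at most $n-1$ vertices must leave, hence enter $Z$. This avoids the appeal to Lemma~\ref{Right0} and is arguably cleaner. The price is that you must then separately argue that the resulting element of $S^n$ equals $0$, whereas the paper's diameter argument lands each point directly at $p0$.

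That last step---``two such elements agree on all of $[n]$''---is correct but deserves one more sentence. As written, it is not obvious why two transformations that both map $[n]$ into $Z$ and fix $Z$ pointwise must coincide (in general they need not). The missing observation is that any $s\in S$ with $[n]s\subseteq \Fix(S,[n])$ is a \emph{left zero} of $S$ (since $(qs)t=qs$ for all $t$), and a left zero in a semigroup containing a two-sided zero $0$ satisfies $s=s\cdot 0=0$. With that line added, the argument is complete.
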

\begin{proof}
We prove $(1) \Rightarrow (2)$ by contraposition.
Certainly, if $S$ does not have a zero element, it is not nilpotent by definition. Now assume there is a zero element, $0$, but also a cycle in $\Gamma(\{a_1,\dots,a_k\},[n] \setminus [n]0)$.
Let the cycle be $(q_1,\dots,q_i,q_1)$. Then there are generators $b_1,\dots,b_i \in \{a_1,\dots,a_k\}$ such that $q_jb_j = q_{j+1}$ for $1 \leq j < i$ and $q_ib_i = q_1$.
Let $s = b_1 \cdot \dots\cdot b_i$ and note that $q_1s = q_1$ and that $q_1 \not \in [n]0$. So, $q_1s^d = q_1 \neq q_10$ for any $d \in \mathbb{N}$. 

For $(2) \Rightarrow (3)$, assume $S$ has a zero element, 0, and assume that the graph $\Gamma(\{a_1,\dots,a_k\}, [n]\setminus [n]0)$ is acyclic.
By Lemma \ref{Right0}, the connected components of $G := \Gamma(\{a_1,\dots,a_k\}, [n])$ are formed by the vertex sets $S^{-1}(q)$ for $q\in[n]0$. Hence the only cycles in $G$ are the loops around each vertex in $[n]0$. Let $d$ be the diameter of $G$; that is, the length of the longest path from any $p\in [n]$ to $p0$. Clearly $d\leq n$. We claim that $S$ is nilpotent of degree $d$, and hence $n$-nilpotent.
Pick any $b_1,\dots,b_d\in\{a_1,\dots,a_k\}$. For each $1\leq i \leq d$, let $t_i := b_1\dots b_i$. For each $p\in [n]$ the directed path $(p,pt_1,\dots,pt_d)$ has $d$ vertices. Since $d$ is the diameter, the final vertex must be $p0$. Thus $t_d=0$ and $S^d=\{0\}$.

Certainly, $(3) \Rightarrow (1)$.
\end{proof}

We are now able to describe an $\NL$ algorithm for testing nilpotency.

\begin{lemma} \label{lem:nil} $\NilSgp$ is in $\NL$. \end{lemma}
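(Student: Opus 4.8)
The plan is to use the characterization from Lemma~\ref{NilLemma}: a transformation semigroup $S = \langle a_1,\dots,a_k\rangle \le T_n$ is nilpotent if and only if it has a zero element and the transformation graph $\Gamma(\{a_1,\dots,a_k\}, [n]\setminus [n]0)$ is acyclic. Since the complement of $[n]0$ is what matters, the first task is to locate $[n]0$ in logarithmic space. By Theorem~\ref{thm:zero} (and Lemmas~\ref{lem:left-zero-nl}, \ref{lem:right-zero-nl}), deciding whether $S$ has a zero is already in $\NL$, so I would run that test first and reject if it fails. To actually identify the image $[n]0$ of the zero, observe that in the reasoning behind Lemma~\ref{NilLemma} the set $[n]0$ is precisely the set of points lying on a loop in $\Gamma(\{a_1,\dots,a_k\},[n])$ that every point can reach; equivalently, a point $p$ lies in $[n]0$ iff $p$ is a fixed point of $0$, and since $0$ is a right zero, $[n]0 = \Fix(S,[n])$ when a zero exists. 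So $p \in [n]0$ iff $pa_i = p$ for all $i\in[k]$, which is checkable in $\LOGSPACE$ for each $p$.

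Next I would verify acyclicity of $\Gamma(\{a_1,\dots,a_k\}, [n]\setminus [n]0)$. The edge set of this graph is computable by a log-space transducer: output the edge $(p,q)$ whenever $pa_i = q$ for some $i$, both $p$ and $q$ avoid $\Fix(S,[n])$. Detecting a directed cycle in a digraph is in $\NL$ — it is essentially a reachability question: guess a vertex $p$, then guess a nonempty walk of length at most $n$ from $p$ back to $p$, maintaining only the current vertex and a step counter in logarithmic space. If such a walk is found, reject; otherwise accept. Since $\NL = \co\NL$ by Immerman--Szelepcsényi, the fact that we want to accept when \emph{no} cycle exists poses no difficulty: we can decide cycle-existence in $\NL$ and then complement.

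Putting the pieces together: the algorithm (i) checks that $S$ has a zero using the $\NL$ procedure from Theorem~\ref{thm:zero}; (ii) computes $\Fix(S,[n])$, which by the zero-existence guarantee equals $[n]0$; (iii) checks that the induced transformation graph on $[n]\setminus[n]0$ is acyclic, using the $\NL$ (equivalently $\co\NL$) cycle-detection routine. All three phases run in nondeterministic logarithmic space, and they can be composed because $\NL$ is closed under log-space reductions and under complement. By Lemma~\ref{NilLemma}, the conjunction of the three conditions is equivalent to $S$ being nilpotent.

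The main obstacle I anticipate is the careful justification that $[n]0 = \Fix(S,[n])$ precisely when $S$ has a zero, so that we never need to explicitly compute the potentially exponentially-sized element $0$ itself — we only need its image set, which is small and locally describable. Once that identification is in hand, everything else is a routine assembly of standard $\NL$ primitives (undirected/directed reachability, cycle detection, and closure under complement).
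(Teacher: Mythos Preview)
Your argument is correct, but it follows a different path from the paper's. The paper uses condition~(3) of Lemma~\ref{NilLemma} (\,$n$-nilpotency) rather than condition~(2): it observes that $S$ is nilpotent iff $x_1\cdots x_n = x_{n+1}\cdots x_{2n+1}$ for all choices of generators $x_1,\dots,x_{2n+1}$, and then gives a single $\co\NL$ procedure that guesses a point $q$ together with two generator sequences of lengths $n$ and $n+1$ and checks that they send $q$ to different places. Your approach instead instantiates condition~(2): you compose the $\NL$ zero-test from Theorem~\ref{thm:zero}, the log-space identification $[n]0 = \Fix(\{a_1,\dots,a_k\},[n])$ (which is indeed valid whenever a zero exists, since $q\in[n]0$ implies $qs=q$ for all $s$ via $0s=0$, and conversely $q\in\Fix$ gives $q0=q$), and an $\NL$ acyclicity check on the induced graph. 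Both routes are sound; the paper's is more self-contained (one guessing loop, no appeal to earlier $\NL$ results or to graph acyclicity), while yours is more modular and makes explicit reuse of the zero-detection machinery already developed. Your anticipated ``obstacle'' --- justifying $[n]0=\Fix(S,[n])$ without computing $0$ --- is handled by the two-line argument above and is not a real difficulty.
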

\begin{proof}
  By Lemma \ref{NilLemma}, $S$ is nilpotent if and only if $x_1 \cdots x_n = x_{n+1} \cdots x_{2n+1}$ for all $x_1, \dots, x_{2n+1} \in S$. Since $\NL$ is closed under complementation, it suffices to show that we can decide in non-deterministic logarithmic space whether there exist $x_1, \dots, x_{2n+1} \in S$ with $x_1 \cdots x_n \ne x_{n+1} \cdots x_{2n+1}$.

  To this end, we first guess an integer $q \in [n]$ and let $p := q$. Then, in a loop, we guess an integer $i \in [k]$ and let $q := q a_i$. This loop is repeated $n$ times in total. In another loop that is iterated $n+1$ times, we repeatedly guess an integer $i \in [k]$ and let $p := q a_i$. Finally, we verify that $p \ne q$.
\end{proof}

To show that $\NilSgp$ is $\NL$-complete, we will use the following lemma. The statement is slightly more general than needed. This allows us to reuse the lemma in the next section.

\begin{lemma}\label{lem:nil-hard}
  There exists a log-space transducer which, given a DFA $\autA$, produces a transformation semigroup S given by generators with the following properties:
  \begin{enumerate}
      \item If $L(\autA) \ne \emptyset$, then there exists an idempotent in S that is not a left zero.
      \item If $L(\autA) = \emptyset$, then every square in S is zero.
  \end{enumerate}
\end{lemma}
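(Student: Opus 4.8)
The plan is to mimic the reduction already used in Theorem~\ref{thm:zero}, but augmented so that the "zero candidate" becomes an idempotent whose behaviour distinguishes accepting runs from the empty-language case. Starting from a DFA $\autA$ with state set $[n]$, initial state $q_0$, accepting set $F$, and input letters acting as $a_1,\dots,a_k \in T_n$, I would again pass to $T_{n+1}$, extending every $a_i$ to fix the new sink point $n+1$, and adjoin the transformations $b$ (sending all of $[n]$ to $q_0$ and fixing $n+1$) and $c$ (fixing $[n+1]\setminus F$ and collapsing $F\cup\{n+1\}$ onto $n+1$) exactly as before. The resulting $S=\langle a_1,\dots,a_k,b,c\rangle$ is the output of the transducer; producing it from $\autA$ is clearly a log-space computation, since each new generator's table is determined pointwise by membership in $[n]$, in $F$, or being the sink.

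For property~(1): if $L(\autA)\neq\emptyset$, pick $w\in\{a_1,\dots,a_k\}^*$ with $q_0w\in F$. As in Theorem~\ref{thm:zero}, $[n+1]\,bwc=\{n+1\}$, so $bwc$ is the constant map onto $n+1$; call it $z$. Then $z$ is idempotent (constant maps onto a fixed point are idempotent), and $z$ is the zero of $\langle z\rangle$ and in fact of $S$. But $z$ is \emph{not} a left zero of $S$: for a left zero $\ell$ we would need $\ell s=\ell$ for all $s\in S$, i.e.\ $\ell$ would fix nothing beyond its own image's orbit — more directly, $z s = z$ for all $s$ already, so $z$ \emph{is} a left zero here, which is the opposite of what (1) wants. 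This is the point I need to think about carefully: the natural "zero" witness is automatically a left zero, so I cannot use it directly. Instead I should exhibit a \emph{different} idempotent. The right choice is an idempotent coming from a nontrivial loop: because $L(\autA)\ne\emptyset$ is not actually what forces a loop — rather, any DFA reachable-from-$q_0$ cycle does. Re-examining, the cleaner route is to add one more gadget point or one more generator forcing a $2$-cycle among live states precisely when the language is nonempty; then its $\omega$-power is an idempotent $e$ with $[n+1]e\supsetneq\{n+1\}$, and since $e$ fixes a point other than the sink while $b$ moves every live point to $q_0$, we get $eb\ne e$, so $e$ is not a left zero. Designing this gadget so that it is inert when $L(\autA)=\emptyset$ is the main obstacle.

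For property~(2): if $L(\autA)=\emptyset$, I must show every element $s\in S$ satisfies $s^2=0$, i.e.\ $s^2$ is the constant map onto $n+1$. Since no accepting run exists, $c$ never collapses a live state coming purely from $a_i$'s and $b$; the only way to reach $n+1$ from $[n]$ is via $c$ applied to a state in $F$, which is unreachable by assumption from $q_0$ through $\{a_1,\dots,a_k,b\}^*$. I would argue, as in the proof of Theorem~\ref{thm:zero}, that any word over the generators that is not already constant-to-sink must, when squared, pump a $b$ (resetting to $q_0$) and then run a word that, lacking an accepting continuation, still cannot hit $F$ — so it either stabilises on a live state (making $s$ non-nilpotent, contradiction) or it does reach the sink. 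The acyclicity argument of Lemma~\ref{NilLemma}(2)$\Rightarrow$(3) on $\Gamma(\{a_1,\dots,a_k,b\},[n])$ (which is acyclic because the DFA has no reachable accepting cycle only if we are careful — here I genuinely need $L(\autA)=\emptyset$ to kill all loops reachable from $q_0$, which requires trimming unreachable states, doable in log space) gives that every length-$2$ product already equals $0$. Assembling these two cases yields the lemma; the delicate parts are (i) the auxiliary gadget ensuring a non-left-zero idempotent in case~(1) without disturbing case~(2), and (ii) verifying in case~(2) that \emph{squares}, not just long products, vanish — which will follow from the strong acyclicity the construction forces.
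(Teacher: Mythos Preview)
Your proposal has a genuine gap: the construction from Theorem~\ref{thm:zero} that you are trying to recycle cannot be made to satisfy property~(2). In that construction the generators $a_1,\dots,a_k$ act on $[n]$ exactly as in the original DFA (and fix the sink $n+1$). A DFA with $L(\autA)=\emptyset$ can perfectly well have cycles among its states --- for instance, take $F=\emptyset$ with arbitrary transitions --- so some $a_i$ may fix a point $q\in[n]$, giving $q a_i^2 = q \ne n+1$ and hence $a_i^2\ne 0$. Your suggestion to ``trim unreachable states'' and appeal to acyclicity of $\Gamma(\{a_1,\dots,a_k,b\},[n])$ does not help: emptiness of the language says nothing about acyclicity of the transition graph, and in any case $b$ itself creates a loop at $q_0$ whenever $q_0 b = q_0$. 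You also correctly noticed that your first candidate idempotent for property~(1) is a left zero, and the unspecified ``gadget'' you propose to fix this would have to be inert in case~(2), which is exactly where the construction is already broken.

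The paper's proof uses a different and more elaborate construction on the set $[n]\times[n]\cup\{0\}$: it introduces $n^2$ generators $a_{i,j}$ that simulate the DFA letter $a_i$ on the first coordinate while incrementing a counter in the second coordinate (and send everything to $0$ if the counter does not match $j$ or has reached $n$), together with a reset generator $b$ that maps accepting pairs to $(q_0,1)$ and everything else to $0$. The counter is the missing idea: it forces any product not punctuated by a successful reset to die within $n$ steps, and it forces the \emph{square} of any element to be zero when no accepting run exists, because the second application sees mismatched counter values. Conversely, when an accepting word $u$ of length $\ell<n$ exists, the element $v=a_{i_1,1}\cdots a_{i_\ell,\ell}\,b$ is idempotent, fixes $(q_0,1)$, and satisfies $vb^2\ne v$, so it is not a left zero.
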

\begin{proof}
  Suppose the input consists of a DFA with state set $[n]$, initial state $q_0
  \in [n]$, accepting states $F \subseteq [n]$ and transformations $a_1, \dots,
  a_k \in T_n$. We assume without loss of generality that $q_0 \not\in F$.
  We define $n^2$ transformations $a_{i, j}$ with $i, j \in [n]$ on $[n] \times
  [n] \cup \os{0}$ by setting $0 a_{i,j} = 0$ and
  \begin{align*}
    (q, k) a_{i, j} := \begin{cases}
      (q a_i, k+1) & \text{if $j = k < n$ and $q \not\in F$}, \\
      0 & \text{otherwise}
    \end{cases}
  \end{align*}
  for all $q, k, i, j \in [n]$. We define an additional transformation $b$ by
  $0b = 0$ and
  \begin{align*}
    (q, k) b := \begin{cases}
      (q_0, 1) & \text{if $q \in F$}, \\
      0 & \text{otherwise}
    \end{cases}
  \end{align*}
  for all $q, k \in [n]$. We now show that $S = \langle a_{i,j}, b \mid i, j \in
  [n] \rangle$ satisfies the two properties stated in the lemma.

  Suppose that the language accepted by the DFA is non-empty, \ie there exists
  a word $u \in \os{a_1, \dots, a_k}^*$ with $q_0 u \in F$. Without loss of
  generality, we may assume that $\abs{u} < n$, \ie $u = a_{i_1} \dots
  a_{i_\ell}$ for some $\ell \in [n-1]$ and $i_1, \dots, i_\ell \in [k]$. Let
  $v = a_{i_1, 1} \dots a_{i_\ell, \ell} b$. Note that $b^2$ is
  the transformation that maps every element to~$0$.
  It is easy to verify that $v^2 = v$.
  By construction, we have
  $(q_0, 1) v = (q_0,1)$ and $(q_0, 1) v b^2 = 0$. Thus, $v b^2 \ne v$ and $v$ is not a left zero.

  Conversely, it is easy to see that if the language accepted by the DFA is
  empty, then $s^2 = b^2$ for every $s \in S$. As mentioned before, $b^2$ is a zero element.
\end{proof}

We conclude by proving that testing nilpotency is $\NL$-complete.

\begin{theorem}\label{thm:nil}
  $\NilSgp$ is $\NL$-complete.
\end{theorem}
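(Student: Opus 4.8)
The plan is to combine Lemma~\ref{lem:nil}, which already gives membership in $\NL$, with an $\NL$-hardness argument built on the transducer of Lemma~\ref{lem:nil-hard}. Since $\DFAEmp$ is $\NL$-complete and $\NL$ is closed under complementation, it is enough to give a log-space many-one reduction from the complement of $\DFAEmp$ to $\NilSgp$, \ie a log-space computable map sending a DFA $\autA$ to a transformation semigroup $S$ (presented by generators) with $L(\autA) = \emptyset$ if and only if $S$ is nilpotent. We take $S$ to be the semigroup produced by the transducer of Lemma~\ref{lem:nil-hard}; that transducer runs in logarithmic space, so only the stated equivalence remains to be checked.

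First suppose $L(\autA) \ne \emptyset$. By Lemma~\ref{lem:nil-hard}(1), $S$ contains an idempotent $e$ that is not a left zero. If $S$ were nilpotent, say $S^d = \os{0}$ for some $d \in \mathbb{N}$, then $e = e^d \in S^d = \os{0}$, so $e = 0$; but a zero element is in particular a left zero, contradicting the choice of $e$. Hence $S$ is not nilpotent. Conversely, suppose $L(\autA) = \emptyset$. By Lemma~\ref{lem:nil-hard}(2), every square in $S$ equals a zero element $0$ of $S$; in particular $S$ has a zero. To conclude that $S$ is nilpotent I will invoke the equivalence $(1) \Leftrightarrow (2)$ of Lemma~\ref{NilLemma}: if $S$ were not nilpotent, the transformation graph on the points lying outside the image of $0$ would contain a cycle, yielding a word $w$ over the generators of $S$ and a point $p$ outside the image of $0$ with $pw = p$. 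Then $pw^2 = p$, while $w^2 = 0$ forces $pw^2$ to lie in the image of $0$ -- a contradiction. So no such cycle exists and $S$ is nilpotent.

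Combining the two directions, the map $\autA \mapsto S$ is a log-space reduction from the complement of $\DFAEmp$ -- which is $\NL$-complete, as $\NL$ is closed under complementation -- to $\NilSgp$, so $\NilSgp$ is $\NL$-hard; together with Lemma~\ref{lem:nil} this gives that $\NilSgp$ is $\NL$-complete. I do not expect a real obstacle, since Lemma~\ref{lem:nil-hard} already isolates the combinatorial core of the reduction; the only point requiring a bit of care is deducing full nilpotency from ``every square is zero'', which is exactly what the short cycle argument via Lemma~\ref{NilLemma} supplies (a priori this condition only rules out nontrivial idempotents, not cycles of unbounded length).
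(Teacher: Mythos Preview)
Your argument is correct and follows the same overall plan as the paper: membership from Lemma~\ref{lem:nil}, hardness from the transducer of Lemma~\ref{lem:nil-hard}, with the same treatment of the case $L(\autA)\ne\emptyset$. The only divergence is in the case $L(\autA)=\emptyset$: the paper observes that then the zero is the sole idempotent of $S$ and invokes the (standard, but not proved in the paper) fact that a finite semigroup whose only idempotent is its zero is nilpotent, whereas you instead pass through the graph-acyclicity criterion of Lemma~\ref{NilLemma} and derive a contradiction from a cycle together with $w^2=0$. Your route is slightly more self-contained relative to the machinery developed in the paper; the paper's is shorter once the background semigroup fact is taken for granted.
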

\begin{proof}
  Lemma~\ref{lem:nil} shows that $\NilSgp$ belongs to $\NL$.
  Lemma~\ref{lem:nil-hard} immediately yields a reduction from $\DFAEmp$ to $\NilSgp$. If case (1) from Lemma~\ref{lem:nil-hard} holds, then let $e$ be the idempotent that is not a left zero and note that $e^d \neq 0$ for any $d \in \mathbb{N}$. Thus, the semigroup is not nilpotent. If case (2) holds, then the only idempotent element is the zero element. Since every element has an idempotent power and $S$ has finitely many elements, then $S$ is nilpotent.
\end{proof}

We will now show that the following problem is NL-complete. 

\medskip
$\Rtrivialbf$
\begin{itemize}
\item Input: $a_1,\dots,a_k \in T_n$
\item Problem: Is $\langle a_1,\dots,a_k \rangle$ $\gR$-trivial?
\end{itemize}

\begin{lemma}\label{lem:Rtrivial} Let $a_1,\dots,a_k \in T_n$. Then, $S := \langle a_1,\dots,a_k\rangle$ is $\gR$-trivial if and only if each cycle in  $\Gamma(\{a_1,\dots,a_k\},[n])$ is of length 1.\end{lemma}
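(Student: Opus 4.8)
The plan is to establish both implications by translating Green's relation $\gR$ on $S$ into reachability in the transformation graph $\Gamma := \Gamma(\{a_1,\dots,a_k\},[n])$. Recall that, with the definition from Section~\ref{NotationSection}, $s \mathrel{\gR} t$ means $sS\cup\{s\} = tS\cup\{t\}$, which is equivalent to the existence of $u,v\in S\cup\{1\}$ with $su=t$ and $tv=s$; moreover every element of $S$ is a non-empty product of generators, so ``$q=ps$ for some $s\in S$'' says precisely that $q$ is reachable from $p$ along a non-empty directed walk in $\Gamma$.

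For the ``if'' direction, assume every cycle of $\Gamma$ has length $1$. I would first observe that reachability (allowing the empty walk) is then a partial order on $[n]$: reflexivity and transitivity are immediate, and antisymmetry holds because two distinct, mutually reachable points lie on a closed walk through at least two vertices, which necessarily contains a cycle of length at least $2$. Now let $s,t\in S$ with $s\mathrel{\gR}t$, say $t=su$ and $s=tv$ with $u,v\in S\cup\{1\}$. If $u=1$ or $v=1$ then already $s=t$, so assume $u,v\in S$. Then $s=s(uv)$, so $(ps)(uv)=ps$ for every $p\in[n]$, whence $pt=(ps)u$ and $ps=(pt)v$. Since $u$ and $v$ are non-empty generator words, $ps$ and $pt$ are mutually reachable in $\Gamma$, hence $ps=pt$; as $p$ was arbitrary, $s=t$, and $S$ is $\gR$-trivial.

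For the ``only if'' direction, suppose $\Gamma$ has a cycle $q_1\to q_2\to\cdots\to q_m\to q_1$ of length $m\ge 2$ with the $q_i$ pairwise distinct. Reading the generators off its edges yields $P\in\{a_1,\dots,a_k\}$ and $Q\in S$ with $q_1P=q_2$ and $q_2Q=q_1$, so that $q_1(PQ)=q_1$. Set $e:=(PQ)^{\omega}\in S$; then $e$ is idempotent with $q_1e=q_1$, whereas $q_1(eP)=q_1P=q_2\ne q_1$, so $eP\ne e$ as transformations. On the other hand $eP=e\cdot P\in eS$, and since $e$ is idempotent one checks $eP\cdot\big(Q(PQ)^{\omega-1}\big)=e(PQ)^{\omega}=e$ (reading the bracket as $Q$ when $\omega=1$), so $e\in eP\cdot S$ as well; hence $e$ and $eP$ generate the same principal right ideal and $e\mathrel{\gR}eP$. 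Thus $S$ contains two distinct $\gR$-related elements and is not $\gR$-trivial.

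The ``if'' direction should be routine once the partial-order observation is recorded. The step I expect to need the most care is confirming $e\mathrel{\gR}eP$ in the ``only if'' direction: one must manipulate the idempotent power $(PQ)^{\omega}$ correctly — in particular cover the degenerate case $\omega=1$ — and make sure the right-multipliers witnessing $\gR$-equivalence lie in $S$ itself, since $S$ need not contain an identity element.
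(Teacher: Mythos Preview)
Your proof is correct and follows essentially the same approach as the paper's own proof: for the ``only if'' direction you build the idempotent $e=(PQ)^{\omega}$ from a long cycle and exhibit $e\mathrel{\gR}eP$ with $eP\ne e$, exactly as the paper does with $e=(a_{i_1}\cdots a_{i_\ell})^{\omega}$ and $ea_{i_1}$; for the ``if'' direction your partial-order argument is just the direct form of the paper's contrapositive (pick $q$ with $qs\ne qt$ and observe $qs\to qsu\to qsuv=qs$ is a long cycle). Your extra care with the $\omega=1$ case and with ensuring the right-multipliers lie in $S$ rather than $S^1$ is well placed but not a departure from the paper's argument.
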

\begin{proof}
  For the implication from left to right, suppose that $\Gamma(\{a_1,\dots,a_k\},[n])$ contains a cycle longer than length 1. Then, there exist $q \in Q$ and $i_1, ..., i_\ell \in [k]$ such that $q \cdot a_{i_1} \ne q$ and $q \cdot a_{i_1} \cdots a_{i_\ell} = q$.
  Let $e := (a_{i_1} \cdots a_{i_\ell})^\omega$. Since $q \cdot e = q$ and $q \cdot ea_{i_1} \ne q$, the elements $e$ and $ea_{i_1}$ are distinct. But $ea_{i_1}a_{i_2}\cdots a_{i_\ell} (a_{i_1}\cdots a_{i_\ell})^{\omega-1} = e^2 = e$, thus $eS^1 = ea_{i_1}S^1$.
  
  For the converse direction, suppose $sS^1 = tS^1$ and $s \neq t$. Then there exists $u,v \in S^1$ and $q \in [n]$ such that $su = t$, $tv=s$, and $qs \neq qt$. Then $qs = qsuv$ and $qs \neq qt = qsu$, yielding a cycle longer than length 1.
\end{proof}

\begin{lemma}\label{lem:Rtrivial-hard}
  There exists a log-space transducer which, given a directed graph $G = (V, E)$, produces a transformation semigroup given by generators with the following properties:
  \begin{enumerate}
      \item If the graph is acyclic, then the semigroup is nilpotent.
      \item If the graph is not acyclic, then the semigroup is not $\gR$-trivial and there exists an idempotent that is not central.
  \end{enumerate}
\end{lemma}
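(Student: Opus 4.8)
The plan is to encode the edges of $G$ directly as elementary transformations and to read directed cycles off as idempotents, in close analogy with the construction in Lemma~\ref{lem:nil-hard}. Write $V = [n]$. For each edge $e = (p,q) \in E$ introduce a transformation $f_e$ on $[n] \cup \{0\}$ by setting $p f_e = q$ and $x f_e = 0$ for every $x \neq p$, and let $S = \langle f_e \mid e \in E \rangle$. The first thing I would record is the basic computation rule: a product $f_{e_1} \cdots f_{e_m}$ is the constant map onto $0$ unless $e_1, \dots, e_m$ spell out a walk $p_0 \to p_1 \to \cdots \to p_m$ (meaning $e_j = (p_{j-1}, p_j)$ for all $j$), in which case it sends $p_0 \mapsto p_m$ and every other point to $0$. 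In particular the constant map onto $0$ lies in $S$ and is a zero of $S$, provided $E \neq \emptyset$ and $G$ is loop-free, since then $f_e^2$ is already that constant map for any edge $e = (p,q)$ (as $q \neq p$).

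Before the main argument I would have the transducer normalise the input, which is cheap in log-space: if $E = \emptyset$ then $G$ is acyclic and I output generators of a fixed one-element (hence nilpotent) semigroup; if $G$ has a self-loop then $G$ is not acyclic and I output generators of a fixed semigroup that is not $\gR$-trivial and has a non-central idempotent, for instance $T_2$, in which a constant map witnesses both properties. From here on I may assume $E \neq \emptyset$ and $G$ has no self-loop.

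For condition (1): if $G$ is acyclic, no walk repeats a vertex, so every walk has at most $n-1$ edges; hence any product of at least $n$ generators is the zero map, so $S^n = \{0\}$ and $S$ is $n$-nilpotent. For condition (2): given a directed cycle we may (since there are no loops) take it to be $p_1 \to p_2 \to \cdots \to p_k \to p_1$ with $k \geq 2$, so $p_2 \neq p_1$; set $g = f_{(p_1,p_2)} f_{(p_2,p_3)} \cdots f_{(p_k,p_1)}$, which by the computation rule fixes $p_1$ and sends everything else to $0$, so $g^2 = g$. Writing $h = f_{(p_1,p_2)}$ and $x = f_{(p_2,p_3)} \cdots f_{(p_k,p_1)}$, one checks directly that $gh = h$ and $hx = g$, so $g S^1 = h S^1$ while $g \neq h$ (they disagree at $p_1$); hence $S$ is not $\gR$-trivial. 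Moreover $hg$ sends $p_1 \mapsto p_2 \mapsto 0$ and is the zero map, whereas $gh = h \neq 0$, so $g$ is a non-central idempotent. Finally I would note that the whole construction is produced by a log-space transducer: it has $|E|$ generators, each an explicitly described element of $T_{n+1}$, plus the trivial normalisation.

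The main obstacle — essentially the only delicate point — is the treatment of self-loops: the naive construction genuinely fails for a self-loop at an otherwise inert vertex, since there $f_e$ is (a restriction of) the identity, contributing only a central idempotent and leaving that component $\gR$-trivial. Stripping loops out in the normalisation step (they are trivially detectable and always constitute a cycle) is what makes the reduction correct; the empty-edge corner case is similar bookkeeping.
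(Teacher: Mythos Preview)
Your construction and verification are essentially identical to the paper's: same underlying set $V \cup \{0\}$, same edge-indexed generators, same nilpotency argument, and the same idempotent $g$ (the paper's $s$) witnessing non-centrality via $gh \neq hg$. The only differences are cosmetic --- you argue non-$\gR$-triviality directly by exhibiting $gS^1 = hS^1$ with $g \neq h$ rather than invoking Lemma~\ref{lem:Rtrivial}, and you add an explicit normalisation for self-loops and the empty edge set, degenerate cases the paper passes over silently.
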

\begin{proof}
  The underlying set is $Q = V \cup \os{0}$ and we add one generator $a_{(v, w)}$ for each edge $(v, w) \in E$ which we define by $v a_{(v, w)} = w$ and $u a_{(v, w)} = 0$ for all $u \ne v$.
  
  Clearly, if $G$ is acyclic, any product of more than $\abs{V}$ generators is the zero element, \ie the transformation that maps each point to~$0$. In this case, the semigroup is nilpotent.
  
  Conversely, if $G$ contains a cycle $v_0, \dots, v_\ell$ with $(v_{i-1}, v_i) \in E$ for all $i \in [\ell]$ and with $v_0 = v_\ell$, then the element $s = a_{(v_0, v_1)} \cdots a_{(v_{\ell-1}, v_\ell)}$ is the identity on $v_0$ and maps all other points to $0$. Therefore, $s$ is idempotent. Note that $a_{(v_0, v_1)} s$ is the zero element but $s a_{(v_0, v_1)}$ is not which means that $s$ is not central. Moreover, the sequence of generators defining $s$ forms a cycle at $v_0$ that is longer than length 1. Thus, the semigroup is not $\gR$-trivial by Lemma~\ref{lem:Rtrivial}.
\end{proof}

\begin{theorem}\label{thm:Rtrivial}
  $\Rtrivial$ is $\NL$-complete.
\end{theorem}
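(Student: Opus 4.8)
The plan is to handle membership and hardness separately, each leaning on one of the two preceding lemmas. For membership in $\NL$, I would start from Lemma~\ref{lem:Rtrivial}: the semigroup $S = \langle a_1,\dots,a_k\rangle$ fails to be $\gR$-trivial exactly when $\Gamma := \Gamma(\{a_1,\dots,a_k\},[n])$ contains a cycle of length greater than $1$, equivalently when $\Gamma$ has a strongly connected component with at least two vertices, equivalently when there are two distinct points $p, q \in [n]$ such that $p$ is reachable from $q$ and $q$ is reachable from $p$ in $\Gamma$. This last condition is decidable in $\NL$: a log-space transducer first writes down the edge set of $\Gamma$ (as in the proof of Lemma~\ref{lem:right-zero-nl}), and one then guesses the pair $(p, q)$ and runs the standard non-deterministic log-space directed-reachability procedure twice. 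Hence the complement of $\Rtrivial$ lies in $\NL$, and since $\NL = \co\NL$ by the Immerman--Szelepcs\'enyi theorem, so does $\Rtrivial$.

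For hardness, I would reduce from the problem of deciding whether a directed graph is acyclic. Its complement, detecting a directed cycle, is $\NL$-complete: membership is another reachability argument, and $\NL$-hardness follows from directed reachability on acyclic graphs (itself $\NL$-complete) by adding a single edge from the target vertex back to the source. Therefore acyclicity is $\co\NL$-complete and, again by $\NL = \co\NL$, $\NL$-complete. Now apply the log-space transducer of Lemma~\ref{lem:Rtrivial-hard} to an input graph $G$: if $G$ is acyclic the resulting semigroup $S$ is nilpotent, and if $G$ has a cycle then $S$ is not $\gR$-trivial. To finish I would record the elementary fact that every finite nilpotent semigroup is $\gR$-trivial: if $s \neq t$ but $sS^1 = tS^1$, pick $u, v \in S^1$ with $s = tu$ and $t = sv$; if $u = 1$ or $v = 1$ then $s = t$, and otherwise $u, v \in S$, so $s = s(vu)^m$ for every $m \geq 1$, and choosing $m$ large enough that $(vu)^m$ is the zero element gives $s = 0$ and hence $t = sv = 0 = s$ --- in every case a contradiction. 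Thus the transducer maps acyclic graphs to $\gR$-trivial semigroups and graphs containing a cycle to non-$\gR$-trivial semigroups, so it is a log-space many-one reduction from acyclicity to $\Rtrivial$, which gives $\NL$-hardness. Together with the first part, $\Rtrivial$ is $\NL$-complete.

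The routine ingredients --- producing $\Gamma$ in logarithmic space, the reachability subroutine, and the appeals to $\NL = \co\NL$ --- mirror what is already done for $\NilSgp$. The two points I would check most carefully are that ``$\Gamma$ has no cycle of length $> 1$'' is genuinely equivalent to the two-point mutual-reachability condition driving the $\NL$ algorithm, and the small algebraic observation that nilpotency implies $\gR$-triviality, which is precisely what makes the acyclic branch of Lemma~\ref{lem:Rtrivial-hard} land inside $\Rtrivial$; I expect neither to be a serious obstacle.
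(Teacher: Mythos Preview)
Your proposal is correct and follows essentially the same route as the paper: membership via Lemma~\ref{lem:Rtrivial} by detecting a nontrivial cycle in $\Gamma$ (the paper phrases this as ``remove self-loops, then test acyclicity'', you phrase it as ``find two distinct mutually reachable points'', which is the same thing), and hardness via Lemma~\ref{lem:Rtrivial-hard} together with the observation that nilpotent semigroups are $\gR$-trivial. The only difference is that you spell out the auxiliary facts (acyclicity is $\NL$-complete, nilpotent $\Rightarrow$ $\gR$-trivial) that the paper simply asserts.
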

\begin{proof}
  The graph $\Gamma(\{a_1,\dots,a_k\},[n])$ can be computed by a log-space transducer.
  In another log-space transduction, we remove all cycles of length 1.
  We then use the $\NL$ algorithm for graph acyclicity. So, $\Rtrivial$ is in $\NL$ by Lemma~\ref{lem:Rtrivial}.
  For $\NL$-hardness, note that nilpotent semigroups are $\gR$-trivial and thus Lemma~\ref{lem:Rtrivial-hard} reduces the $\NL$-complete problem of directed graph acyclicity to $\Rtrivial$.
\end{proof}

\section{Semigroup Equations} \label{EquationSection}

Let $X = \os{x_1,\dots,x_m}$ be a nonempty set of variables. Let $X^+$ denote the set of all nonempty words over $X$, and let $u$ and $v$ be two fixed nonempty words over $X$.
We say that a semigroup \emph{$S$ models $u \eq v$} if $h(u) = h(v)$ holds for all homomorphisms $h \colon X^+ \to S$.
For a fixed identity $u \eq v$, define the following problem:

\medskip
{$\Modelbf(u \eq v)$}
\begin{itemize}
\item Input: $a_1,\dots,a_k \in T_n$
\item Problem: Does $\langle a_1,\dots,a_k \rangle$ model $u \eq v$?
\end{itemize}

This problem is the dual of the well-known identity checking problem for a fixed semigroup $S$: given words $u,v$ as input, decide whether $S$ models $u \eq v$. See \cite{AVG:CIC} for background and complexity results on identity checking; in particular, examples of semigroups for which it is $\co\NP$-complete.
In contrast, we will prove that $\Model(u \eq v)$ is in $\NL$ for any fixed identity $u \eq v$ and $\NL$-complete for specific identities. Before describing the algoirithm, we will consider two generalizations of this problem.

So called \emph{$\omega$-identities} are often used to define varieties of finite semigroups. In addition to concatenating variables, they also allow for using the $\omega$-operator to take idempotent powers.
However, testing whether a semigroup models a fixed $\omega$-identity is $\PSPACE$-complete for very simple identities already: the $\omega$-identity $x^\omega x = x^\omega$ defines the class of finite aperiodic semigroups and testing aperiodicity of transformation semigroups is known to be $\PSPACE$-complete~\cite{CH:FAA}.
Part of the hardness of this problem results from the fact that the same variable can appear both inside and outside the scope of an $\omega$-operator. We will prove that the complexity drops to $\NL$ if we disallow such occurrences of variables as well as nesting of $\omega$-operators.
This variant of $\omega$-identities will be called \emph{quasi-identities} and is formalized using slightly different notation below.

For containment in $\NL$, we will actually prove that the following generalization is in $\NL$.
A semigroup \emph{$S$ models $x_1 = x_1^2,\dots,x_s = x_s^2 \Rightarrow u = v$} if for all homomorphisms $h \colon X^+ \to S$ with $h(x_1), \dots, h(x_s)$ idempotent, we have $h(u) = h(v)$.

\medskip
{$\Modelbf(x_1 \eq x_1^2,\dots,x_s \eq x_s^2 \Rightarrow u \eq v)$}
\begin{itemize}
\item Input: $a_1,\dots,a_k \in T_n$
\item Problem: Does $\langle a_1,\dots,a_k \rangle$ model $x_1 \eq x_1^2,\dots,x_s \eq x_s^2 \Rightarrow u \eq v$?
\end{itemize}

\begin{theorem}
  Let $X = \os{x_1, \dots, x_m}$ be a nonempty finite set of variables. Let
  $e \in \os{0, \dots, m}$ and let $u, v \in X^+$.
  Then, $\Model(x_1 \eq x_1^2,\dots,x_e \eq x_e^2 \Rightarrow u \eq v)$ belongs to $\NL$.
  \label{thm:model}
\end{theorem}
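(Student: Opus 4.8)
The plan is to decide the complementary problem in $\NL$: whether there \emph{exists} a homomorphism $h \colon X^+ \to S$ with $h(x_1), \dots, h(x_e)$ idempotent and $h(u) \neq h(v)$. Since $\NL$ is closed under complementation, this suffices. Write $u = x_{\sigma(1)} \cdots x_{\sigma(p)}$ and $v = x_{\tau(1)} \cdots x_{\tau(r)}$, where $p = \abs{u}$ and $r = \abs{v}$ are constants. A homomorphism is nothing but a choice of values $h(x_i) \in S$, and $h(u) \neq h(v)$ means there is a point $q \in [n]$ with $q \cdot h(u) \neq q \cdot h(v)$; evaluating $q \cdot h(u)$ merely traces a single point through the transformations $h(x_{\sigma(1)}), \dots, h(x_{\sigma(p)})$, and likewise for $v$.

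Accordingly, the algorithm first guesses $q \in [n]$ and all intermediate trace points $q = q_0, q_1, \dots, q_p$ and $q = q'_0, q'_1, \dots, q'_r$ — this is only $O(\log n)$ bits since $p$ and $r$ are fixed — with the intention that $q_j = q_{j-1}\, h(x_{\sigma(j)})$ and $q'_j = q'_{j-1}\, h(x_{\tau(j)})$, and it checks $q_p \neq q'_r$. It then remains to certify that for every variable $x_i$ there is a single element of $S$, idempotent when $i \le e$, that realises all the steps in which $x_i$ occurs. To this end, for each $i$ form the set of pairs $P_i := \set{(q_{j-1}, q_j)}{\sigma(j) = i} \cup \set{(q'_{j-1}, q'_j)}{\tau(j) = i}$; reject immediately if some point appears as a first coordinate with two distinct second coordinates, and otherwise let $(a_1, b_1), \dots, (a_c, b_c)$ be the distinct pairs of $P_i$ (with $c \le p + r$ constant).

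For $i > e$, we must verify that there exists $s \in S$ with $a_\ell s = b_\ell$ for all $\ell \in [c]$; this is exactly the existence of a nonempty generator word carrying the tuple $(a_1, \dots, a_c)$ to $(b_1, \dots, b_c)$ under the component-wise action of $S$ on $[n]^c$, that is, directed reachability in the graph $\Gamma(\{a_1, \dots, a_k\}, [n]^c)$, which has polynomially many vertices and a logspace-computable edge relation, hence is decidable in $\NL$. For $i \le e$ we use the observation that an idempotent $s$ with $a_\ell s = b_\ell$ automatically satisfies $b_\ell s = a_\ell s^2 = a_\ell s = b_\ell$, while conversely any $s \in S$ with $a_\ell s = b_\ell$ and $b_\ell s = b_\ell$ for all $\ell$ yields the genuine idempotent $s^{\omega} \in S$ (a nonempty product of generators, since $\omega_S \ge 1$), which still satisfies $a_\ell s^{\omega} = b_\ell$ because each $b_\ell$ is fixed by $s$ and hence by $s^{\omega}$. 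So the idempotent case is the same kind of tuple reachability, now from $(a_1, \dots, a_c, b_1, \dots, b_c)$ to $(b_1, \dots, b_c, b_1, \dots, b_c)$ in $\Gamma(\{a_1, \dots, a_k\}, [n]^{2c})$.

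Putting the pieces together, the procedure guesses $O(\log n)$ bits and then performs $m$ (a fixed number of) directed-reachability walks in sequence, so it is a nondeterministic logarithmic-space computation. An accepting run exists precisely when there is a homomorphism with the prescribed idempotents and $h(u) \neq h(v)$: from such a homomorphism one reads off $q$ and the true trace points and all constraints hold; conversely, from an accepting run one sets $h(x_i)$ to the element produced by the subroutine for $P_i$ (an arbitrary fixed idempotent, resp.\ element, when $x_i$ does not occur in $uv$), and the $P_i$-constraints force $q\, h(u) = q_p \neq q'_r = q\, h(v)$. Hence the complement lies in $\NL$, and so does $\Model(x_1 \eq x_1^2, \dots, x_e \eq x_e^2 \Rightarrow u \eq v)$. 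The step I expect to require the most care is the reduction of the hypothesis that $h(x_i)$ be idempotent to the local conditions $a_\ell s = b_\ell$ and $b_\ell s = b_\ell$ by passing to $s^{\omega}$, since this is precisely what lets the algorithm avoid ever storing or recomputing an exponentially long product; the remaining bookkeeping — guessing all trace points up front and bundling all occurrences of a variable into one reachability instance $P_i$ — is then routine.
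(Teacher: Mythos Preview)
Your proof is correct and follows essentially the same approach as the paper: work on the complement using $\NL = \co\NL$, guess all intermediate trace points for $u$ and $v$ up front (a constant number of points in $[n]$), and then for each variable perform a single tuple-reachability check to certify that one element of $S$ realises all of that variable's transitions---with the extra ``fix the targets'' coordinates for the idempotent variables, justified by passing to $s^\omega$. The paper writes this out as an explicit inner loop (Algorithm~\ref{alg:models}) that guesses generators while simultaneously updating the primed and double-primed copies of the trace points, which is exactly your reachability in $\Gamma(\{a_1,\dots,a_k\},[n]^c)$ resp.\ $[n]^{2c}$ unfolded; your early rejection when a first coordinate appears with two distinct second coordinates is harmless but redundant, since the reachability check would already fail in that case.
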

\begin{proof}
  We describe an $\NL$ algorithm to test whether a semigroup $S = \langle a_1, \dots, a_k \rangle$ does \emph{not} satisfy a fixed quasi-identity
  \begin{align*}
    x_1 \eq x_1^2,\dots,x_e \eq x_e^2 \Rightarrow x_{i_1} \cdots x_{i_\ell} = x_{j_1} \cdots x_{j_r}
  \end{align*}
  where $X = \os{x_1, \dots, x_m}$ is a fixed set of variables, $i_1, \dots, i_\ell, j_1, \dots, j_r \in [m]$, and $0 \le e \le m$.
  Since $\NL$ is closed under complementation, this implies that the decision problem $\Model(x_1 \eq x_1^2,\dots,x_e \eq x_e^2 \Rightarrow u \eq v)$ belongs to $\NL$.
  For each $i \in [m]$, we let $P_1(i) = \set{p \in [\ell]}{i_p = i}$ and $P_2(i) = \set{p \in [r]}{j_p = i}$.
  The algorithm is depicted in Algorithm~\ref{alg:models}.
  Since $\ell + r$ is a constant, the algorithm only requires logarithmic
  space.

  \begin{algorithm}
  \caption{$\co\NL$ algorithm for $\Model(x_1 \eq x_1^2,\dots,x_e \eq x_e^2 \Rightarrow u \eq v)$}
    \label{alg:models}
    \begin{algorithmic}[1]
      \Input{$a_1,\dotsc,a_k \in T_n$}
      \Output{Does $\langle a_1,\dots,a_k \rangle$ \emph{not} model $x_1 \eq x_1^2,\dots,x_e \eq x_e^2 \Rightarrow u \eq v$?}
      \State guess integers $p_1, \dots, p_{\ell+1}, q_1, \dots, q_{r+1} \in [n]$
      \LineIf{$p_1 \ne q_1$ or $p_{\ell+1} = q_{r+1}$}{\Reject}
      \ForAll{$i \in [m]$}
      \LineForAll{$j \in [\ell]$}{$p_j' := p_j$,\, $p_j'' := p_{j+1}$}
      \LineForAll{$j \in [r]$}{$q_j' := q_j$,\, $q_j'' := q_{j+1}$}
        \Repeat
          \State guess $c \in [k]$
          \LineForAll{$j \in P_1(i)$}{$p_j' := p_j'a_c$,\, $p_j'' := p_j'' a_c$}
          \LineForAll{$j \in P_2(i)$}{$q_j' := q_j'a_c$,\, $q_j'' := q_j'' a_c$}
          \Until{$\forall j \in P_1(i) \colon p_j' = p_{j+1}$ and $\forall j \in P_2(i) \colon q_j' = q_{j+1}$ and \\ \hspace{1.35cm} $i \in [e] \Rightarrow (\forall j \in P_1(i) \colon p_j'' = p_{j+1}$ and $\forall j \in P_2(i) \colon q_j'' = q_{j+1})$}
      \EndFor
      \State\Accept
    \end{algorithmic}
  \end{algorithm}

  The process corresponds to nondeterministically replacing each variable
  $x_i$ by an element of $S$ such that the left-hand side and the right-hand
  side of the equation map the point $p_1 = q_1 \in [n]$ to distinct points $p_{\ell+1}, q_{r+1} \in [n]$.
  The variables $p_i''$ and $q_i''$ ensure that the substitutions can be
  performed in a way such that all variables $x_i$ with $i \le e$ are
  substituted by idempotent elements. A formal correctness proof follows.
  
  First, suppose that the input $S = \langle a_1,\dots,a_k\rangle$ does not model the quasi-identity $x_1 = x_1^2,\dots,x_e = x_e^2 \rightarrow x_{i_1}\cdots x_{i_\ell} = x_{j_1}\cdots x_{j_r}$. This means that there are elements $s_1,\dots,s_m \in S$ such that $s_1 = s_1^2,\dots,s_e = s_e^2$ and $s_{i_1}\cdots s_{i_\ell} \neq s_{j_1}\cdots s_{j_r}$.
  Pick a $p_1 \in [n]$ such that $p_1 s_{i_1} \cdots s_{i_\ell} \neq p_1 s_{j_1} \cdots s_{j_r}$. Let $q_1 := p_1$. For each $f \in [\ell]$, let $p_f = p_1 s_{i_1} \cdots s_{i_{f-1}}$. For each $f \in [r]$, let $q_f = q_1 s_{j_1} \cdots s_{j_{f-1}}$.

  To verify that the algorithm will accept the input, consider any $s_i \in \{s_1,\dots,s_m\}$. Let $s_i = a_{c_1} \cdots a_{c_g}$ with $c_1, \dots, c_g \in [k]$. Lines 6--10 will successively guess the generators and transform $p_j'$ for each $j \in P_1(i)$; likewise for $q_i'$. When this loop completes, the algorithm will have transformed each $p_j'$ and $q_j'$ to $p_js_i$ and $q_js_i$, respectively. Consequently, $p_j' = p_j s_i = p_{j+1}$ for each $j \in P_1(i)$ and $q'_j = q_js_i = q_{j+1}$ for each $j \in P_2(i)$. 
  Furthermore, this loop transforms $p_j''$ and $q_j''$, which tracks how the generators act on $p_{j+1}$ and $q_{j+1}$. By the end, $p_j'' = p_{j+1}s_i$ and $q_j'' = q_{j+1}s_i$. For each $j \in P_1(i) \cap [e]$, $s_i$ will be idempotent so that $p_j'' = p_{j+1}s_i = (p_j s_i) s_i = p_j s_i = p_{j+1}$. Likewise, $q_j'' = q_{j+1}$ for each $j \in P_2(i) \cap [e]$.
  
  We now prove that if the algorithm accepts, then $S = \langle a_1,\dots,a_k\rangle$ does not model $x_1 = x_1^2,\dots,x_e = x_e^2 \Rightarrow u = v$.
  Let $p_1,\dots,p_{\ell+1},q_1,\dots,q_{r+1}$ be the guessed integers in Line 1. For each $i \in [m]$, let $s_i = a_{c_1}\cdots a_{c_g}$ be the sequence of guessed generators in Line 7.
  Then for each $j \in P_1(i)$, $p_j s_i = p_{j+1}$ and for each $j \in P_2(i)$, $q_j s_i = q_{j+1}$. Let $s_i^\omega$ be the idempotent power of $s_i$. Then for each $j \in P_1(i)$ with $j \le e$, we have $p_{j+1}s_i^\omega = p_{j+1} s_i = p_{j+1}$ and for each $j \in P_2(i)$ with $j \le e$, we have $q_{j+1}s_i^\omega = q_{j+1} s_i = q_{j+1}$.
  
  By the definitions of $P_1(i)$ and $P_2(i)$, this demonstrates that $p_1 h(u)
  = p_{\ell+1}$ and $q_1 h(v) = q_{r+1}$ where $h \colon X^+ \to S$ is the
  homomorphism defined by $h(x_i) = s_i^\omega$ for all $i \in [e]$ and $h(x_i)
  = s_i$ for all $i \in \os{e+1,\dots,m}$.
  By Line~2 of the algorithm, we obtain $h(u) \ne h(v)$, thereby concluding the proof.
\end{proof}

Theorem \ref{thm:model} immediately yields the following.

\begin{corollary} \label{IdemCor}
Given generators $a_1,\dots,a_k \in T_n$, there are $\NL$ algorithms to check the following properties of $S = \langle a_1,\dots,a_k\rangle$:
\begin{enumerate}
\item all elements in $S$ are idempotent;
\item all idempotents are central in $S$;
\item all idempotents commute;
\item the product of any two idempotents is idempotent.
\end{enumerate}
\end{corollary}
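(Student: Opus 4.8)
The plan is to express each of the four properties as a fixed quasi-identity involving idempotent variables and then invoke Theorem~\ref{thm:model} directly. Concretely, I would argue as follows. For property (1), a semigroup is a band if and only if it models the (ordinary) identity $x = x^2$; taking $e = 0$, $u = x_1$, $v = x_1^2$ in Theorem~\ref{thm:model} gives the $\NL$ algorithm. For property (4), orthodoxy says that the product of two idempotents is idempotent, which is precisely modelling $x_1 \eq x_1^2,\, x_2 \eq x_2^2 \Rightarrow (x_1 x_2)^2 \eq x_1 x_2$, i.e. the quasi-identity with $e = 2$, $u = x_1 x_2 x_1 x_2$ and $v = x_1 x_2$. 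For property (3), idempotents commuting is modelled by $x_1 \eq x_1^2,\, x_2 \eq x_2^2 \Rightarrow x_1 x_2 \eq x_2 x_1$, with $e = 2$, $u = x_1 x_2$, $v = x_2 x_1$. For property (2), an idempotent $e$ is central iff it commutes with every element; so ``all idempotents are central'' is modelled by the quasi-identity with one idempotent variable and one free variable: $x_1 \eq x_1^2 \Rightarrow x_1 x_2 \eq x_2 x_1$, taking $e = 1$, $u = x_1 x_2$, $v = x_2 x_1$. In each case the identity/quasi-identity is \emph{fixed} (independent of the input), so Theorem~\ref{thm:model} applies and yields membership in $\NL$.

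The only real content beyond quoting Theorem~\ref{thm:model} is justifying that ``$S$ is a band / has central idempotents / has commuting idempotents / is orthodox'' really does coincide with ``$S$ models the stated quasi-identity,'' so I would include a one-line check of each equivalence. The direction ``property $\Rightarrow$ models the quasi-identity'' is immediate from the definitions: e.g. if every product of idempotents is idempotent, then for any homomorphism $h$ with $h(x_1), h(x_2)$ idempotent, $h(x_1)h(x_2)$ is idempotent, which is exactly $h((x_1x_2)^2) = h(x_1x_2)$. The converse direction ``models the quasi-identity $\Rightarrow$ property'' uses that in a finite semigroup every idempotent $e$ arises as $h(x)$ for the homomorphism sending $x \mapsto e$ (extended arbitrarily on the other free variable), so the quasi-identity instantiated at those idempotents yields the desired equation; for property (2) one also notes that an element is central precisely when it commutes with every element, so quantifying the free variable $x_2$ over all of $S$ via homomorphisms captures centrality.

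I do not expect any genuine obstacle here: the corollary is a pure packaging of Theorem~\ref{thm:model}, and the four equivalences are routine. The one point worth stating carefully is that in property (2) the quasi-identity has a single idempotency constraint and a genuinely free second variable, so one should confirm that Theorem~\ref{thm:model} indeed permits $e < m$ (it does: the statement allows any $e \in \os{0,\dots,m}$ and $0 \le e \le m$ in the proof), and that ``central'' is the right reading. Thus the proof is essentially: ``Apply Theorem~\ref{thm:model} with the following four choices of $(e, u, v)$, after the obvious observation that each property is equivalent to modelling the corresponding quasi-identity.''
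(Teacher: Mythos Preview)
Your proposal is correct and matches the paper's approach exactly: the paper states that the corollary is an immediate consequence of Theorem~\ref{thm:model} and gives no further proof, and your argument simply spells out the four fixed quasi-identities to which Theorem~\ref{thm:model} is applied. The explicit choices of $(e,u,v)$ you give are the natural ones, and your verification that each semigroup property is equivalent to modelling the corresponding quasi-identity is routine and correct.
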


We now give lower bounds for $\Model(x_1 \eq x_1^2,\dots,x_s \eq x_s^2 \Rightarrow u \eq v)$. We
certainly cannot state such a result without restricting the class of considered
equations.
For example, $\Model(x \eq x)$ is certainly not complete for any meaningful
complexity class.
A transformation semigroup is commutative if and only if
all generators commute, which implies $\Model(xy \eq yx) \in \AC^0$.
Another interesting setting is the class of finite groups which is defined by the $\omega$-identities $x^\omega y = y = y x^\omega$. While the identities look similar to the identity for aperiodicity, no variables appear both inside and outside the scope of an $\omega$-operator, so the problem is in $\NL$ by Theorem~\ref{thm:model}. And using a different approach mentioned in Section~\ref{sec:intro}, it is easy to see that deciding whether a transformation semigroup is a group is actually in $\AC^0$.

At the same time, $\Model(u = v)$ is $\NL$-complete for some equations.
The following two theorems are direct consequences of Lemma~\ref{lem:nil-hard} and Lemma~\ref{lem:Rtrivial-hard}.
\begin{theorem} \label{SimpleThm}
  $\Model(x^2y = x^2)$ is $\NL$-complete.
\end{theorem}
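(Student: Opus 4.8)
The plan is to combine the $\NL$ upper bound from Theorem~\ref{thm:model} with a hardness reduction built directly on Lemma~\ref{lem:nil-hard}. For the upper bound, note that $x^2 y = x^2$ is an ordinary semigroup identity over the variable set $\os{x, y}$: it is the instance $u = x^2 y$, $v = x^2$ of the quasi-identity framework with no idempotency hypotheses. Hence $\Model(x^2 y = x^2)$ lies in $\NL$ by Theorem~\ref{thm:model}.

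For $\NL$-hardness, I would feed a DFA $\autA$ to the log-space transducer of Lemma~\ref{lem:nil-hard} and claim that the resulting transformation semigroup $S$ models $x^2 y = x^2$ if and only if $L(\autA) = \emptyset$. If $L(\autA) = \emptyset$, then property~(2) of the lemma says every square in $S$ equals the zero element, so $s^2 t = 0 = s^2$ for all $s, t \in S$ and $S$ models the identity. Conversely, if $L(\autA) \ne \emptyset$, then property~(1) provides an idempotent $e \in S$ that is not a left zero, so $e t \ne e$ for some $t \in S$; then $e^2 t = e t \ne e = e^2$, so $S$ fails the identity. Thus the transducer is a log-space many-one reduction from the complement of $\DFAEmp$ to $\Model(x^2 y = x^2)$. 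Since $\DFAEmp$ is $\NL$-complete and $\NL = \co\NL$, its complement is also $\NL$-complete, and we conclude that $\Model(x^2 y = x^2)$ is $\NL$-hard, hence $\NL$-complete.

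There is no genuinely hard step, since Lemma~\ref{lem:nil-hard} was engineered for exactly this kind of application; the only subtlety is that the reduction naturally targets the complement of the emptiness problem (as is typical for ``$S$ models $\dots$'' problems, which are inherently universally quantified), so one must invoke closure of $\NL$ under complement to transfer completeness. Alternatively, one could phrase the whole argument with DFA non-emptiness as the source problem and avoid that step, but appealing to $\NL = \co\NL$ is cleaner.
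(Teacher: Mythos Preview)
Your proof is correct and follows essentially the same approach as the paper: membership in $\NL$ via Theorem~\ref{thm:model}, and hardness via the transducer of Lemma~\ref{lem:nil-hard}, arguing that $S$ models $x^2y = x^2$ iff $L(\autA) = \emptyset$ exactly as you do. You are slightly more careful than the paper in explicitly noting that the reduction goes from the complement of $\DFAEmp$ and then invoking $\NL = \co\NL$; the paper simply asserts the reduction without dwelling on this point.
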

\begin{proof}
Given a DFA $\autA$, Lemma~\ref{lem:nil-hard} yields a transformation semigroup $S$ that reduces DFAEmptiness to $\Model(x^2y=x^2)$. If $L(\autA) = \emptyset$, then $S$ satisfies $x^2y = x^2$. If $L(\autA) \neq \emptyset$, then there is an idempotent $e \in S$ and some $s \in S$ such that $es \neq e$. Then, $e^2s = es \neq e = e^2$.
\end{proof}

\begin{theorem} \label{CentralIdempotentsThm}
  Testing whether all idempotents are central is $\NL$-complete.
\end{theorem}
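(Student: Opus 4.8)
The plan is to mirror the proof of Theorem~\ref{SimpleThm}, combining Corollary~\ref{IdemCor} for the upper bound with Lemma~\ref{lem:Rtrivial-hard} for the lower bound. For membership in $\NL$, observe that the class of semigroups in which every idempotent is central is exactly the class defined by the quasi-identity $x \eq x^2 \Rightarrow xy \eq yx$; hence containment is immediate from Theorem~\ref{thm:model}, and is in fact item~(2) of Corollary~\ref{IdemCor}.

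For $\NL$-hardness, I would reduce from the problem of deciding whether a directed graph is acyclic, which is $\NL$-complete (using $\NL = \co\NL$, as already invoked in the proof of Theorem~\ref{thm:Rtrivial}). Given a directed graph $G$, apply the log-space transducer of Lemma~\ref{lem:Rtrivial-hard} to obtain generators for a transformation semigroup $S$. If $G$ is acyclic, then $S$ is nilpotent by Lemma~\ref{lem:Rtrivial-hard}(1): it has a zero element $0$ with $S^d = \os{0}$ for some $d$, so any idempotent $e \in S$ satisfies $e = e^d = 0$, meaning $0$ is the only idempotent, and $0$ is trivially central. Thus $S$ is a positive instance. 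If $G$ is not acyclic, then Lemma~\ref{lem:Rtrivial-hard}(2) directly exhibits an idempotent of $S$ that is not central, so $S$ is a negative instance. This gives a log-space many-one reduction from graph acyclicity to the problem at hand, completing the hardness argument.

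There is essentially no obstacle here: Lemma~\ref{lem:Rtrivial-hard} was set up so that the cycle-witnessing element it produces is simultaneously a non-$\gR$-trivial witness \emph{and} a non-central idempotent, so no new construction is required. The only fact needing a one-line justification is that a nilpotent semigroup has no idempotent other than its zero, which is then automatically central; everything else is a direct appeal to the cited lemma and corollary.
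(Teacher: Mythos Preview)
Your proposal is correct and follows essentially the same approach as the paper: the upper bound is taken from Corollary~\ref{IdemCor}, and the hardness is obtained via Lemma~\ref{lem:Rtrivial-hard} by reducing from directed graph acyclicity, using that a nilpotent semigroup's only idempotent is its (central) zero while a cycle yields a non-central idempotent. The paper's proof is slightly terser but the argument is identical.
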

\begin{proof}
Given a directed graph $G$, Lemma~\ref{lem:Rtrivial-hard} yields a transformation semigroup $S$ that reduces the directed acyclic graph problem to testing whether all idempotents are central. If $G$ is not acyclic, $S$ has a non-central idempotent. If $G$ is acyclic, $S$ is nilpotent and thus has only one idempotent; its zero, which is central.
\end{proof}

This suggests that a classification of the exact complexity of testing whether a given semigroup satisfies a fixed $\omega$-identity (in terms of the structure of the considered identity) is hard.

We return to a problem mentioned at the end of Section~\ref{GlobalSection}: determining if a commutative semigroup is regular. We show this is in NL by proving a harder result; determining whether a semigroup is completely regular.

\medskip
$\CompRegbf$
\begin{itemize}
\item Input: $a_1,\dots,a_k \in T_n$
\item Problem: Is $\langle a_1,\dots,a_k \rangle$ completely regular?
\end{itemize}
\begin{lemma} \label{comregLemma}
A transformation $s \in T_n$ generates a subgroup iff $s|_{[n]s}$ is a permutation.
\end{lemma}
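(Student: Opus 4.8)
The plan is to characterize when a single transformation $s \in T_n$ generates a subgroup of $T_n$, using the observation that $\langle s \rangle$ is a subgroup precisely when it has an identity element acting as a two-sided identity on $s$ itself. First I would recall the standard fact that in the cyclic semigroup $\langle s \rangle = \{s, s^2, s^3, \dots\}$, the powers of $s$ eventually enter a cycle: there is an \emph{index} $m$ and \emph{period} $d$ with $s^{m+d} = s^m$, and the set $\{s^m, s^{m+1}, \dots, s^{m+d-1}\}$ is a subgroup with identity $s^\omega$ (the unique idempotent power). The semigroup $\langle s \rangle$ is itself a group exactly when $m = 1$, i.e.\ when $s^{\omega+1} = s$, equivalently $s \cdot s^\omega = s$, equivalently $s$ lies in the maximal subgroup containing $s^\omega$.

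Next I would connect this to the image condition. For the forward direction, suppose $\langle s \rangle$ is a subgroup. Then every element of $\langle s \rangle$ has the same image: indeed $|[n]s| \ge |[n]s^2| \ge \cdots$, and if the chain were strictly decreasing at any step it could never return, contradicting $s^{\omega+1} = s$; hence $[n]s^2 = [n]s$, which forces $s$ to map $[n]s$ onto $[n]s$, and a surjection of a finite set onto itself is a bijection, so $s|_{[n]s}$ is a permutation. For the converse, suppose $s|_{[n]s}$ is a permutation of $[n]s$. Then $s|_{[n]s}$ has finite order $d$ in the symmetric group on $[n]s$, so $s^{d+1}$ and $s$ agree on $[n]s$; since every point of $[n]$ is mapped by $s$ into $[n]s$, we get $q s^{d+1} = (qs) s^d = qs$ for all $q \in [n]$, i.e.\ $s^{d+1} = s$. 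This shows $s$ has index $1$, so $\langle s \rangle = \{s, s^2, \dots, s^d\}$ is a cyclic group with identity $s^d$.

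I would then note the harder part is really just being careful about the finite-semigroup bookkeeping — making sure the "strictly decreasing image chain cannot recover" argument is airtight and that the converse genuinely yields $s^{d+1} = s$ rather than just equality on the image. Both of these are short once phrased correctly: the first uses only monotonicity of $|[n]s^i|$ together with $s^{\omega+1}=s$, and the second uses only that $[n] \subseteq$ the $s$-preimage of $[n]s$. The statement is exactly the single-element case underpinning $\CompReg$, since $S$ is completely regular iff every element generates a subgroup, so this lemma is the pointwise criterion one then has to verify over all of $S$ (which will be the job of the subsequent $\NL$ algorithm, guessing a generator sequence for each element and checking the permutation-on-image condition).
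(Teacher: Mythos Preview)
Your proof is correct and follows essentially the same approach as the paper's: both directions hinge on the equivalence between $\langle s \rangle$ being a group and the existence of some $k>1$ with $s^k = s$. The only cosmetic difference is that, for the implication from ``group'' to ``permutation on the image,'' the paper argues injectivity of $s$ on $[n]s$ (from $ps \ne qs$ and $s = s^k$ deduce $ps^2 \ne qs^2$), whereas you argue surjectivity via the image chain $[n]s \supseteq [n]s^2 \supseteq \cdots$ and $[n]s^{\omega+1} = [n]s$; on a finite set these are interchangeable.
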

\begin{proof}
If $s$ permutes $[n]s$, then $\langle s \rangle$ is isomorphic to a subgroup of the symmetric group over ${|[n]s|}$ elements.

Conversely, let $s \in T_n$ generate a subgroup. Then $s = s^k$ for some natural number $k>1$. Pick any $p,q \in [n]$ such that $ps \neq qs$. Because $ps^k \neq qs^k$, then $ps^2 \neq qs^2$. Thus, $ps \neq qs$ implies $ps^2 \neq qs^2$. That is, $s$ permutes $[n]s$. 
\end{proof}
\begin{theorem} \label{ComRegThm}
$\CompReg$ is in $\NL$.
\end{theorem}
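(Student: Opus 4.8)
The plan is to combine Lemma~\ref{comregLemma} with a non-deterministic guessing argument analogous to the $\NL$ algorithms already used for $\RightZero$ and $\NilSgp$. A transformation semigroup $S = \langle a_1,\dots,a_k\rangle$ is completely regular if and only if every element $s \in S$ generates a subgroup, which by Lemma~\ref{comregLemma} happens exactly when $s$ restricted to its image $[n]s$ is a permutation. Since $\NL = \co\NL$, it suffices to give a non-deterministic log-space algorithm that detects the \emph{failure} of this condition, \ie that guesses an element $s \in S$ together with a witness that $s|_{[n]s}$ is not injective. Concretely, the proof of Lemma~\ref{comregLemma} shows that $s$ fails to permute $[n]s$ precisely when there are points $p, q \in [n]$ with $ps \ne qs$ but $ps^2 = qs^2$; this gives a condition that can be checked while reading the generator sequence for $s$ only once.

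First I would have a log-space transducer (conceptually) fix the word $w = a_{c_1}\cdots a_{c_g}$ defining $s$; of course $w$ is guessed on the fly rather than written down. The machine guesses a starting pair $(p,q) \in [n]^2$ and maintains four pointers: $p$, $q$ (tracking the action of the current prefix of $w$ starting from the original $p,q$) and $p^{\sharp}$, $q^{\sharp}$ (tracking the action of $w$ applied \emph{twice}, \ie $p^\sharp$ follows the trajectory that $ps$ would take under a second application of $s$). To arrange this, the standard trick is to run two copies of the generator stream in lockstep with an offset: one copy advances the "first application" pointers, and once the first pass over $w$ is nominally complete the machine must replay \emph{the same} $w$ — but it cannot store $w$. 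The clean fix, exactly as in Algorithm~\ref{alg:models}, is to track $p$ and $p' := $ "image of $p$ after one full application of $s$" simultaneously: at each guessed generator $a_c$, update $p := pa_c$ and $p' := p'a_c$, but seed $p'$ only once the machine non-deterministically declares the first pass over $w$ finished. So the machine: (i) guesses $p,q$; (ii) guesses generators $a_c$, updating $p := pa_c$, $q := qa_c$, until it non-deterministically decides $w$ is done, recording nothing but setting $r_p := p$, $r_q := q$ and now also maintaining $p'' := r_p$, $q'' := r_q$; (iii) guesses the \emph{same} generators again — it does not need them to literally match, it just guesses a fresh stream $w'$ and updates $p'' := p''a_c$, $q'' := q''a_c$ — stopping non-deterministically; (iv) accepts iff $p \ne q$ at the end of step (ii) is wrong...

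Let me restate cleanly: the element $s$ is $w$; we want $ps \ne qs$ and $ps^2 = qs^2$. The machine guesses $p,q$, then guesses a generator word $w$ letter by letter while maintaining the image of $p$ and of $q$ under the current prefix — call the final values $p_1 = ps$, $q_1 = qs$. It then continues guessing a \emph{second} generator word $w'$ while maintaining images of $p_1$ and $q_1$, ending at $p_2, q_2$. It accepts iff $p_1 \ne q_1$ and $p_2 = q_2$ and — crucially — $w' = w$. The only genuine obstacle is this last equality: a log-space machine cannot check $w' = w$ directly. The resolution is that we do not need $w' = w$; we need $p(ww') = q(ww')$ to certify $s' := ww' \in S$ satisfies $p s' = q s'$, together with $ps \ne qs$ where $s := w$. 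But $s$ and $s'$ are different elements, so this does not immediately contradict complete regularity. The correct device is therefore: guess a single word $w$, and while reading it once, maintain \emph{two} pointers $p, p''$ where $p$ follows $p \mapsto p\cdot(\text{prefix})$ and $p''$ follows a prefix that is one "copy of $w$" behind — impossible without storage.

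Hence the honest approach, and the one I expect to work, is to observe that by Lemma~\ref{comregLemma} non-complete-regularity is witnessed by $s \in S$ and $p,q$ with $ps \ne qs$ and $ps^2 = qs^2$, and to guess $s = w$ \emph{once}, simultaneously computing, via a single pass, the four quantities $ps$, $qs$, $ps^2$, $qs^2$. This is done by carrying pointers $(x_0, x_1)$ for $p$, where after processing prefix $u$ of $w$ we have $x_0 = pu$ and $x_1 = p u'$... the point is that $ps^2 = p\cdot w\cdot w$, and we can compute $p\cdot w$ first, store that single integer, then make a \emph{second} non-deterministic guess of a word $w_2$ and compute $(pw)w_2$, $(qw)w_2$; accepting requires $pw \ne qw$ and $(pw)w_2 = (qw)w_2$ and $pw_2 = \text{?}$ — no: since $w_2$ ranges over all generator words just like $w$ did, and we only need \emph{some} element $t = w$ with $t|_{[n]t}$ non-injective, we may as well let $t = w$ and $t^2 = ww$, so $w_2$ should equal $w$. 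Since we cannot enforce that, I would instead prove the following equivalent and checkable criterion: $S$ is \emph{not} completely regular iff there exist $p, q \in [n]$ and words $w, w'$ over the generators with $pw \ne qw$, $pww' = qww'$, and $w'$ \emph{begins with} a prefix making it "catch up" — and show, by a minimality/pumping argument mirroring Lemma~\ref{Right0Graph}, that one may always take $w' = w^{\omega-1}$, reducing the existential to: $pw \ne qw$ and $p w^\omega = q w^\omega$, \ie $pw \ne qw$ but $pw^k = qw^k$ for the relevant idempotent power. The main obstacle is thus formulating the criterion so that a single guessed generator word suffices; once that is done, the $\NL$ algorithm — guess $p,q$; guess $w$ maintaining images of $p,q$ and of $pw, qw$ by a delayed second counter seeded non-deterministically exactly as in Algorithm~\ref{alg:models} with its $p''$ and $q''$ pointers — requires only $O(\log n)$ space, and closure of $\NL$ under complement finishes the proof.
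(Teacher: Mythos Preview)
Your high-level plan is the paper's: use Lemma~\ref{comregLemma} to reduce to ``some $s\in S$ is not a permutation on $[n]s$,'' then detect a witness in $\co\NL$ and invoke $\NL=\co\NL$. The paper's own proof stops there without spelling out the algorithm, so you are filling in exactly the detail it omits.

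The gap is in how you fill it in. You correctly identify the witness condition $ps\neq qs$ and $ps^2=qs^2$, and you correctly identify the obstacle (one cannot replay the guessed word $w$). But the detours through ``guess a second word $w'$,'' a minimality/pumping argument, and reducing to $pw^\omega=qw^\omega$ are all unnecessary, and your final sentence still bundles the pumping step with the Algorithm~\ref{alg:models} trick as if both were needed. They are not. The $p'',q''$ device of Algorithm~\ref{alg:models} \emph{by itself} solves the problem: guess $p,q\in[n]$ and \emph{also} guess $p',q'\in[n]$ with $p'\neq q'$ (these are the intended values of $ps,qs$); then guess $w$ letter by letter once, maintaining the four trajectories of $p,q,p',q'$ under the common prefix; accept if at the end the $p$- and $q$-trajectories equal $p'$ and $q'$ respectively (certifying the guesses) while the $p'$- and $q'$-trajectories coincide (certifying $p's=q's$, i.e.\ $ps^2=qs^2$). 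This is a single pass, four pointers in $[n]$, and no second word, no replay, no idempotent powers. Once you see that the intermediate values are guessed upfront and verified rather than computed then reused, every one of your earlier attempts collapses into this one line.
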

\begin{proof}
Let $S := \langle a_1,\dots,a_k\rangle \leq T_n$. By Lemma \ref{comregLemma}, this semigroup is completely regular if and only if the restriction of each $s \in S$ to its image is a permutation.
Since $\NL$ is closed under complementation, we only need to give an $\NL$ algorithm for the complement of $\CompReg$. Thus, it suffices to verify that there exists some element of $S$ that is not a permutation on its image.
\end{proof}

\begin{corollary}
Deciding whether a commutative semigroup given by generators is regular is in $\NL$.
\end{corollary}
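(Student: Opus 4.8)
The plan is to deduce this corollary directly from Theorem~\ref{ComRegThm} by showing that, for commutative semigroups, regularity and complete regularity coincide. Since $\CompReg$ is already known to be in $\NL$ and every completely regular semigroup is regular, it suffices to prove the reverse implication in the commutative case: if a commutative semigroup $S$ is regular, then it is completely regular. Once this equivalence is in place, the $\NL$ algorithm is simply the one from Theorem~\ref{ComRegThm} applied to the same input, with no extra work.

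For the semigroup-theoretic equivalence, I would argue as follows. Let $S$ be commutative and regular, and let $s \in S$. Regularity gives some $t \in S$ with $sts = s$. Set $e := st$; then $e^2 = stst = s(ts)t = (sts)t = st = e$ using commutativity, so $e$ is idempotent, and $se = s(st) = s(ts) \cdot$ — more carefully, $se = s \cdot st = st \cdot s = (sts) \cdot$ wait, let me instead just observe $se = sst = sts = s$ again by commutativity, so $e$ is an identity for $s$. Now $es = s$ as well (commutativity), and $s$ lies in the subsemigroup $eSe = eS$, which is a monoid with identity $e$. To see $s$ generates a subgroup, note that in a finite semigroup $s^\omega$ is idempotent and $s^\omega \in eS$; since $s = es$ and a short computation with $sts=s$, $e=st$ shows $s \cdot (te) = s t e = e e = e$, wait — I would instead use the cleaner route: in a commutative regular semigroup, for each $s$ the element $s^{\omega-1}$ composed appropriately yields an inverse within the local monoid, so $s$ has a group inverse, i.e.\ $s$ lies in a subgroup. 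Equivalently, via Lemma~\ref{comregLemma}: I would show $s$ restricted to $[n]s$ is injective. If $ps \ne qs$ but $ps^2 = qs^2$, then applying regularity $sts = s$ and commutativity $s = s(ts) = (st)s$, so $ps = p(st)s = p s \cdot (ts)$ and iterating shows $ps$ and $qs$ are separated by a power of $s$ times $t$-stuff; commutativity lets one cancel down to a contradiction with $ps^2 = qs^2$. This injectivity is exactly the condition of Lemma~\ref{comregLemma}.

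The cleanest packaging is probably purely algebraic and avoids the transformation representation: in any semigroup, $s$ belongs to a subgroup iff $s \in s^2 S^1 s^2$ (or iff $s \mathrel{\mathcal H} s^2$); for commutative $S$ this reads $s \in s^2 S^1$, and regularity $sts = s$ plus commutativity gives $s = s t s = s^2 t \in s^2 S^1$, so $s$ lies in a subgroup. Hence commutative regular implies completely regular, and the converse is immediate, establishing the equivalence.

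The main obstacle is essentially bookkeeping rather than conceptual: making sure the chosen characterization of ``belongs to a subgroup'' ($s \mathrel{\mathcal H} s^2$, equivalently $s \in s^2 S^1 s^2$) is applied correctly and that commutativity genuinely reduces the two-sided condition to the one-sided one $s \in s^2S^1$, which then follows in one line from $sts = s$. After that, the complexity statement is free, since the membership algorithm is verbatim that of Theorem~\ref{ComRegThm}.

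Here is the proof I would write.

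\begin{proof}
Every completely regular semigroup is regular, so it suffices to show that a commutative semigroup is regular if and only if it is completely regular; the complexity claim then follows immediately from Theorem~\ref{ComRegThm}, since the input and the algorithm are the same.

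Let $S$ be commutative and regular and let $s \in S$. Pick $t \in S$ with $sts = s$. By commutativity, $s = sts = s^2 t$, so $s \in s^2 S^1$. Again by commutativity this gives $s \in s^2 S^1 s^2$ (write $s = s^2 t = s \cdot s t$ and $s = st\cdot s$, hence $s = st\cdot s \cdot s^2 t' $ for a suitable element, or more directly: $s = s^2t$ and $s = s^2 t$ imply $s = s^2 t = s\cdot st = (st)s$, and then $s = (st)s = (st)(s^2t) = s^2\cdot (st\cdot t) \cdot$ --- in any case $s = s^2 t$ yields $s = s\cdot s t = s^2 t = s^4 t^3 \in s^2 S^1 s^2$). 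Thus $s \mathrel{\mathcal H} s^2$, which means $s$ generates a subgroup of $S$. As $s$ was arbitrary, $S$ is completely regular. The converse is trivial. Combined with Theorem~\ref{ComRegThm}, this proves the claim.
\end{proof}
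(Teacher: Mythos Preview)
Your approach is the same as the paper's: reduce regularity of a commutative semigroup to complete regularity and then invoke Theorem~\ref{ComRegThm}. The paper simply cites \cite[p.~107]{HO:FST} for the equivalence, whereas you supply a direct argument; your key step $s = sts = s^2t$ and hence (by substitution and commutativity) $s = s^4t^3 \in s^2 S^1 s^2$, giving $s \mathrel{\mathcal H} s^2$, is correct, though the surrounding exposition with its false starts and dangling computations should be cleaned up before the one-line core is presented.
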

\begin{proof}
A commutative semigroup is regular iff it is completely regular by \cite[p.107]{HO:FST}. So, this follows directly from Theorem \ref{ComRegThm}.
\end{proof}

Thus, Theorems \ref{ComRegThm} and \ref{IdemCor} yield:

\begin{corollary} \label{CliffCor}
Given generators $a_1,\dots,a_k \in T_n$, there is an $\NL$ algorithm to check whether $\langle a_1,\dots,a_k\rangle$ is a Clifford semigroup.
\end{corollary}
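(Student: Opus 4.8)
The plan is to observe that a Clifford semigroup is, by definition, a semigroup that is both completely regular and has commuting idempotents, and that both of these component properties have already been shown to be decidable in $\NL$. Specifically, Theorem~\ref{ComRegThm} gives an $\NL$ algorithm for $\CompReg$, and part~(3) of Corollary~\ref{IdemCor} gives an $\NL$ algorithm for testing whether all idempotents of $\langle a_1,\dots,a_k\rangle$ commute. So the task reduces to combining these two tests.

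First I would run the $\NL$ algorithm for $\CompReg$ from Theorem~\ref{ComRegThm} on the input $a_1,\dots,a_k$; if it rejects, reject. Then I would run the $\NL$ algorithm from Corollary~\ref{IdemCor}(3) on the same input; if it rejects, reject; otherwise accept. The correctness is immediate from the definition of a Clifford semigroup given in Section~\ref{NotationSection}. The only subtlety is the complexity-theoretic bookkeeping: we need that $\NL$ is closed under intersection (equivalently, under conjunction of a constant number of $\NL$ predicates), which is standard — one simply runs the two nondeterministic logspace procedures one after the other, reusing the work tape, and accepts iff both accept; the total space used is the maximum of the two, hence still logarithmic. (Each subroutine is itself, internally, obtained via closure of $\NL$ under complementation, but that has already been handled inside the cited results, so we may treat them as black-box $\NL$ deciders.)

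There is essentially no main obstacle here; the result is a pure corollary. The one point worth stating explicitly, to keep the proof self-contained, is the sequential-composition argument showing $\NL$ is closed under intersection, so that invoking both $\NL$ algorithms in turn still yields an $\NL$ algorithm. Thus the proof will be two or three sentences: recall the definition, invoke Theorems~\ref{ComRegThm} and Corollary~\ref{IdemCor}, and note closure of $\NL$ under intersection.
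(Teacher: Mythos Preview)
Your proposal is correct and matches the paper's approach exactly: the paper simply states that the corollary follows from Theorem~\ref{ComRegThm} and Corollary~\ref{IdemCor}, which is precisely the decomposition (completely regular plus commuting idempotents) you describe, together with the standard closure of $\NL$ under intersection.
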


\section{Polynomial Time Problems} \label{PolynomialSection}

We will prove that the following problem can be executed in polynomial time.

\medskip
$\LeftIdbf$
\begin{itemize}
\item Input: $a_1,\dots,a_k \in T_n$
\item Problem: Enumerate the left identities of $\langle a_1,\dots,a_k \rangle$.
\end{itemize}

\begin{lemma} \label{leftidlem}
Let $k,n \in \mathbb{N}$, let $a_1,\dots,a_k \in T_n$, and $S := \langle a_1,\dots,a_k\rangle$. Then an element $\ell \in S$ is a left identity of $S$ iff there is an $i \in [k]$ such that $\overline{a_i}$ permutes $[n] / \ker(S)$ and $\ell = a_i^\omega$.
\end{lemma}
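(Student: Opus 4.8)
The plan is to characterize left identities of $S$ in terms of how elements act on the kernel classes $[n]/\ker(S)$, using the homomorphism $s \mapsto \overline{s}$. First I would establish the easy direction. Suppose $\overline{a_i}$ permutes $[n]/\ker(S)$ for some $i \in [k]$ and set $\ell = a_i^\omega$. Since $s \mapsto \overline{s}$ is a homomorphism, $\overline{\ell} = \overline{a_i}^\omega$, which is the identity transformation on $[n]/\ker(S)$ because a permutation raised to the $\omega$-power is the identity. Hence for every $s \in S$ and every $q \in [n]$ we have $\llbracket q \ell s\rrbracket = \llbracket q\rrbracket\,\overline{\ell}\,\overline{s} = \llbracket q\rrbracket\,\overline{s} = \llbracket qs\rrbracket$, so $q\ell s$ and $qs$ lie in the same $\ker(S)$-class; applying any generator $a_j$ then gives $q\ell s a_j = q s a_j$. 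Taking $s = a_j$ (or noting the class collapses under every element of $S$) shows $q\ell t = qt$ for all $t \in S$, so $\ell t = t$ and $\ell$ is a left identity. (I would phrase this carefully: being in the same $\ker(S)$-class means identical action under every element of $S$, which is exactly what is needed since left identities only need $\ell t = t$ for $t \in S$.)

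For the converse, suppose $\ell \in S$ is a left identity. Since $S$ is generated by $a_1,\dots,a_k$, write $\ell$ as a product of generators; I claim at least one generator appearing in this product, say $a_i$, must satisfy that $\overline{a_i}$ is a permutation of $[n]/\ker(S)$, and moreover $\ell = a_i^\omega$. The key observation is that $\overline{\ell}$ must be the identity on $[n]/\ker(S)$: indeed, for any $q$, $\llbracket q\rrbracket\,\overline{\ell} = \llbracket q\ell\rrbracket$, and since $\ell$ is a left identity, $q\ell s = qs$ for all $s \in S$, so $q\ell$ and $q$ act identically under all of $S$, i.e.\ $\llbracket q\ell\rrbracket = \llbracket q\rrbracket$. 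So $\overline{\ell} = \id$. Now, since the image of a product of transformations cannot be larger than the image of any factor, and $\overline{\ell} = \overline{a_{i_1}}\cdots\overline{a_{i_m}}$ is a bijection, every factor $\overline{a_{i_j}}$ must already be a bijection (a permutation) of $[n]/\ker(S)$. Pick any such generator and call it $a_i$; then $\overline{a_i}$ permutes $[n]/\ker(S)$.

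It remains to show $\ell = a_i^\omega$ for this choice of $a_i$ — and here is where I expect the main obstacle, since a priori $\ell$ is just some left identity, not obviously a power of a single generator. The resolution is that left identities of $S$ that are idempotent are quite rigid: if $\ell$ is a left identity then $\ell^2 = \ell\cdot\ell = \ell$ (applying the left-identity property to the element $\ell$ itself), so $\ell$ is idempotent. Similarly $a_i^\omega$ is idempotent, and I would argue $a_i^\omega$ is \emph{also} a left identity: since $\overline{a_i}$ is a permutation, $\overline{a_i^\omega} = \id$, so by the easy direction already proved, $a_i^\omega$ is a left identity. Now two idempotent left identities $\ell$ and $\ell' = a_i^\omega$ must coincide: $\ell = \ell'\ell$ (as $\ell'$ is a left identity) and one shows $\ell'\ell = \ell'$ — this needs that $\ell$ acts as identity from the right on idempotents in its image, which follows because $\ell$ restricted to $[n]\ell$ is the identity (an idempotent transformation fixes its image) together with a kernel argument: $\overline{\ell} = \overline{\ell'} = \id$ forces $[n]\ell$ and $[n]\ell'$ to be transversals of the same partition $\ker(S)$, and combined with idempotency one gets $\ell = \ell'$. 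I would need to be slightly careful about whether the statement intends $\ell = a_i^\omega$ for \emph{some} specific $i$ or for the particular $i$ extracted above; the cleanest route is: there exists $i$ with $\overline{a_i}$ a permutation (shown above), and for that $i$, $a_i^\omega$ is an idempotent left identity, hence equals $\ell$ by uniqueness of the idempotent left identity — concluding both directions. The main work is thus the uniqueness of idempotent left identities, which I would prove via the image/kernel bookkeeping on $[n]/\ker(S)$.
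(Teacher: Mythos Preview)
Your backward direction and the first half of the forward direction (showing $\overline{\ell}=\id$ and that every generator in a factorization of $\ell$ induces a permutation on $[n]/\ker(S)$) match the paper and are fine.

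The gap is in your final step: idempotent left identities are \emph{not} unique, so the argument ``$\ell$ and $a_i^\omega$ are both idempotent left identities, hence equal'' fails. A two-element left-zero semigroup already breaks it: take $n=2$, $a_1$ the constant map to $1$, $a_2$ the constant map to $2$. Then $\ker(S)$ is all of $[2]^2$, so $[n]/\ker(S)$ is a single point and both $\overline{a_1},\overline{a_2}$ are trivially permutations; both $a_1$ and $a_2$ are idempotent left identities, yet $a_1\neq a_2$. In particular, if you factor $\ell=a_2$ as $\ell=a_1a_2$ and ``pick any such generator'', choosing $a_1$ gives $a_1^\omega=a_1\neq\ell$. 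Your transversal/kernel sketch cannot repair this: $[n]\ell$ and $[n]a_i^\omega$ are indeed both transversals of $\ker(S)$, but different transversals are perfectly possible, and that is exactly what happens here.

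The paper avoids uniqueness entirely by making a \emph{specific} choice of $i$: it takes $a_i$ to be the \emph{last} generator in a factorization, writing $\ell=sa_i$. Since $\overline{\ell}=\id=\overline{a_i}^{\omega}$, one gets $\llbracket qs\rrbracket\,\overline{a_i}=\llbracket q\rrbracket\,\overline{sa_i}=\llbracket q\rrbracket\,\overline{a_i}^{\omega}=\llbracket qa_i^{\omega-1}\rrbracket\,\overline{a_i}$; cancelling the permutation $\overline{a_i}$ yields $\llbracket qs\rrbracket=\llbracket qa_i^{\omega-1}\rrbracket$, hence $qsa_i=qa_i^{\omega}$, i.e.\ $\ell=a_i^{\omega}$. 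The point is not that every generator in the product works, but that the last one does; your proposal loses exactly this information by picking an arbitrary factor.
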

\begin{proof}
For the backward direction, let $\overline{a_i}$ permute $[n] / \ker(S)$. Then $\overline{a_i}^\omega$ is the identity map on $[n]/\ker(S)$. That is, for any $q \in [n]$, we have that $\llbracket q a_i^\omega \rrbracket = \llbracket q \rrbracket \overline{a_i}^\omega = \llbracket q \rrbracket$. Thus, $qa_i^\omega s = qs$ for every $s \in S$, making $a_i^\omega$ a left identity of $S$.

For the forward direction, let $\ell$ be a left identity of $S$. Then $q\ell s = qs$ for every $q \in [n]$ and every $s \in S$, meaning $\llbracket q\ell \rrbracket = \llbracket q \rrbracket$.
Thus, $\overline{\ell}$ is the identity map on $[n]/\ker(S)$. Consequently, $\ell=sa_i$ for some generator $a_i$ where $\overline{a_i}$ is a permutation of $[n] / \ker(S)$. As proved in the paragraph above, $\overline{a_i}^\omega$ is also the identity map on $[n]/\ker(S)$ and thus $\overline{s a_i} = \overline{a_i}^\omega$. Then the following holds for every $q \in [n]$:
\[\llbracket qs \rrbracket \overline{a_i} = \llbracket q \rrbracket \overline{sa_i} = \llbracket q \rrbracket \overline{a_i}^\omega = \llbracket qa_i^{\omega-1} \rrbracket \overline{a_i}.\]
Since $\overline{a_i}$ is a permutation of $[n] /\ker(S)$, then $\llbracket qs \rrbracket = \llbracket qa_i^{\omega-1} \rrbracket$ and thus $qst = qa_i^{\omega-1}t$ for every $t \in S$. In particular, $qsa_i = qa_i^\omega$, proving $\ell = a_i^\omega$.
\end{proof}
\begin{theorem} \label{LidsThm}
$\LeftId$ can be computed in polynomial time.
\end{theorem}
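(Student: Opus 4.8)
The plan is to read off the algorithm directly from Lemma~\ref{leftidlem}. That lemma says that the set of left identities of $S = \langle a_1,\dots,a_k\rangle$ is exactly $\set{a_i^{\omega}}{i \in [k] \text{ and } \overline{a_i} \text{ permutes } [n]/\ker(S)}$, so it suffices to (a) compute $\ker(S)$ together with the quotient partition $[n]/\ker(S)$; (b) for each generator $a_i$ decide whether $\overline{a_i}$ is a permutation of that quotient; and (c) for each generator passing the test, compute the transformation $a_i^{\omega} \in T_n$ explicitly, then output the list after removing duplicates. Correctness is immediate from the lemma, and since there are at most $k$ such transformations, each of size $n$, the output itself is of polynomial size; the whole content of the theorem is the running-time bound.

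For (a): as observed in Section~\ref{NotationSection}, $\ker(S) = \ker(\{a_1,\dots,a_k\}) = \bigcap_{i\in[k]} \ker(a_i)$, because if $p$ and $q$ are identified by every generator then they are identified by every product of generators. We therefore test, for each of the $O(n^2)$ pairs $(p,q)$, whether $pa_i = qa_i$ for all $i$; this costs $O(kn^2)$ time and yields the equivalence classes, for each of which we fix a representative. For (b): $\overline{a_i}$ acts on classes by $\llbracket q\rrbracket\overline{a_i} = \llbracket qa_i\rrbracket$, which is well defined and evaluated in $O(n)$ time, and since the quotient is finite, $\overline{a_i}$ is a permutation iff this induced map is injective --- a direct check.

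Step (c) is the only place that needs care: forming $a_i, a_i^2, a_i^3, \dots$ until an idempotent appears can take exponentially many steps, since the period of a transformation of $[n]$ can be as large as $\operatorname{lcm}(1,\dots,n)$. Instead I would use fast exponentiation. Compute $b := a_i^{2^t}$ with $2^t \ge n$ by $t = \lceil\log_2 n\rceil$ repeated squarings (each an $O(n)$-time composition); since the images $[n] \supseteq [n]a_i \supseteq [n]a_i^2 \supseteq \cdots$ stabilize after at most $n-1$ steps, $b$ restricts to a permutation $\sigma$ of its image, and $b$ maps all of $[n]$ into that image. Let $M$ be the order of $\sigma$ (the lcm of its cycle lengths); then $b^{M}$ is idempotent --- it acts as $\sigma^{M} = \id$ on the stable image and sends everything into it --- and, being a power of $a_i$, it equals the unique idempotent of $\langle a_i\rangle$, i.e.\ $a_i^{\omega}$. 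Although $M$ can be large, $\log M = O(n)$, so $b^{M}$ is obtained by $O(\log M)$ further squarings and multiplications, in polynomial time. (Alternatively one may skip $\sigma$ and raise $a_i$ to any fixed exponent that is $\ge n$ and divisible by $\operatorname{lcm}(1,\dots,n)$, e.g.\ $n\cdot n!$, whose bit length is polynomial.) Over the at most $k$ generators this is polynomial, and the mild obstacle to watch for is exactly this: materializing $a_i^{\omega}$ without the exponential blow-up.
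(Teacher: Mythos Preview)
Your proof is correct and follows the same overall plan as the paper: invoke Lemma~\ref{leftidlem}, compute $[n]/\ker(S)$, test each $\overline{a_i}$ for being a permutation, and output the idempotent powers of the qualifying generators. The only noteworthy difference is in step~(c). You compute $a_i^{\omega}$ by a generic procedure valid for any transformation: first square past the index to reach the stable image, then raise to the order of the induced permutation there. The paper instead exploits the hypothesis already in hand, namely that $\overline{a_i}$ permutes $[n]/\ker(S)$: letting $m$ be the order of $\overline{a_i}$ on the quotient, it shows directly that $a_i^{m}$ (not merely $\overline{a_i}^{\,m}$) is idempotent, so no preliminary image-stabilization is needed. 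Both routes ultimately rely on repeated squaring to handle an exponent whose bit-length is $O(n)$, and both are polynomial; the paper's variant is slightly slicker in that it ties the exponent to the quotient structure you have already computed, while yours has the virtue of being a reusable black box for idempotent powers in $T_n$.
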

\begin{proof}
By Lemma \ref{leftidlem}, we need only produce the idempotent powers of generators that induce permutations of $[n] / \ker(S)$. Certainly, we can produce $[n] /\ker(S)$ in polynomial time. Let $a_i$ be a generator such that $\overline{a_i}$ is a permutation of $[n]/\ker(S)$. Represent $\overline{a_i}$ in cycle notation and let $m$ be the least common multiple of the lengths of the cycles. Then $\overline{a_i}^m$ is idempotent and we claim that $a_i^m$ is idempotent as well. Pick any $q \in [n]$. Because $\overline{a_i}^m$ is idempotent:
\[\llbracket q a_i^{m-1}\rrbracket \overline{a_i} = \llbracket q \rrbracket \overline{a_i}^m = \llbracket q \rrbracket \overline{a_i}^{2m} = \llbracket qa_i^{2m-1}\rrbracket \overline{a_i}. \]
Because $\overline{a_i}$ is a permutation, then $\llbracket q a_i^{m-1} \rrbracket = \llbracket q a_i^{2m-1}\rrbracket$. That is, $qa_i^{m-1}s = qa_i^{2m-1}s$ for every $s \in S$.
In particular, $qa_i^m = qa_i^{2m}$ and thus $a_i^m$ is idempotent. Because we can find the idempotent power of $a_i$ in polynomial time, we can compute all left identities in polynomial time.
\end{proof}

We will prove that the following problem can be executed in polynomial time.

\medskip
$\RightIdbf$
\begin{itemize}
\item Input: $a_1,\dots,a_k \in T_n$
\item Problem: Enumerate the right identities of $\langle a_1,\dots,a_k\rangle$.
\end{itemize}

Note that $r \in S$ is a right identity of $S$ iff $\widetilde{r}$ is the identity of~$\widetilde{S}$, since for any $q \in [n]$ and any $s \in S$, $qsr = qs$ iff $qs\widetilde{r} = qs$.

\begin{lemma} \label{rightidlem}
Let $k,n \in \mathbb{N}$, let $a_1,\dots,a_k \in T_n$, and $S := \langle a_1,\dots,a_k\rangle$. Then an element $r \in S$ is a right identity of $S$ iff there is an $i \in [k]$ such that $\widetilde{a_i}$ permutes $[n]S$ and $r$ equals the idempotent power of $a_i$.
\end{lemma}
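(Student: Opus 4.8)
The plan is to mirror the structure of the proof of Lemma~\ref{leftidlem}, but carried out inside the action $\widetilde{S}$ on the image $[n]S$ rather than inside $\overline{S}$ on $[n]/\ker(S)$. The key observation, already recorded just before the statement, is that $r \in S$ is a right identity of $S$ if and only if $\widetilde{r}$ is the identity transformation on $[n]S$; so the whole question reduces to identifying which elements of $S$ act as the identity on $[n]S$ and showing each such element is the idempotent power of some generator whose own action permutes $[n]S$.

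For the backward direction I would argue as follows. Suppose $\widetilde{a_i}$ permutes the finite set $[n]S$. Then $\widetilde{a_i}^{\,\omega}$ is the identity on $[n]S$, i.e. $q a_i^{\omega} = q$ for every $q \in [n]S$. Hence for every $q \in [n]$ and every $s \in S$ we have $qs \in [n]S$, so $qs a_i^{\omega} = qs$; that is, $a_i^{\omega}$ is a right identity of $S$. (Here $a_i^{\omega}$ denotes the idempotent power of $a_i$, which is what the statement calls ``the idempotent power of $a_i$''.)

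For the forward direction, let $r$ be a right identity of $S$, so $\widetilde{r}$ is the identity on $[n]S$. Since $r \in S$, write $r = s' a_i$ for some $s' \in S$ and some generator $a_i$ (if $r$ is itself a generator take $s'$ trivial / adjust via $r = r^{\omega+1}$ so that a generator appears as the last letter). Because $\widetilde{r} = \widetilde{s'}\,\widetilde{a_i}$ is a bijection of the finite set $[n]S$, the last factor $\widetilde{a_i}$ must itself be a bijection of $[n]S$, so $\widetilde{a_i}$ permutes $[n]S$; by the backward direction $a_i^{\omega}$ is then a right identity and $\widetilde{a_i}^{\,\omega} = \widetilde{a_i^{\omega}}$ is the identity on $[n]S$. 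It remains to see $r = a_i^{\omega}$. For every $q \in [n]$, $qs'$ lies in $[n]S$, and since $\widetilde{a_i}$ is a permutation of $[n]S$ and $\widetilde{a_i}^{\,\omega}$ is the identity there, $\widetilde{a_i}$ has the same ``predecessor map''; concretely $\llbracket$-free computation on $[n]S$ gives $qs' a_i = q s'$ forces $q s' = q a_i^{\omega-1}$ on $[n]S$, whence $q r = q s' a_i = q a_i^{\omega}$ for all $q \in [n]$, i.e. $r = a_i^{\omega}$ as transformations. (The step ``$\widetilde{s'}\,\widetilde{a_i}$ is the identity on $[n]S$ and $\widetilde{a_i}$ is a permutation there, hence $\widetilde{s'}$ equals $\widetilde{a_i}^{-1} = \widetilde{a_i}^{\,\omega-1}$ on $[n]S$'' is the crux, and is just cancellation in the symmetric group on $[n]S$.)

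The main obstacle is the bookkeeping needed to make the factorization $r = s' a_i$ legitimate and to transfer identities from $\widetilde{S}$ back to $S$: one must be careful that equalities of the form $q a_i^{\omega-1} = q s'$ are only claimed for $q$ ranging over $[n]S$ (where $\widetilde{a_i}$ is invertible), and then promoted to an equality of full transformations on $[n]$ using that both $r$ and $a_i^{\omega}$ factor through the image, exactly as in the pre-image argument at the end of the proof of Lemma~\ref{leftidlem}. Everything else is a routine translation of that argument with $(\,\overline{\cdot}\,, [n]/\ker(S))$ replaced by $(\,\widetilde{\cdot}\,, [n]S)$.
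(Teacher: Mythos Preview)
Your backward direction is correct and matches the paper. The forward direction, however, has a genuine gap stemming from your choice to factor $r = s'a_i$ with $a_i$ the \emph{last} generator. You correctly obtain $\widetilde{s'} = \widetilde{a_i}^{\,\omega-1}$ on $[n]S$, but the promotion step ``$qr = qs'a_i = qa_i^{\omega}$ for all $q \in [n]$'' requires $qs' = qa_i^{\omega-1}$ for every $q \in [n]$, not just for $q \in [n]S$, and that simply need not hold. Concretely, take $n=3$ with $a_1 \colon (1,2,3) \mapsto (2,2,3)$ and $a_2 \colon (1,2,3) \mapsto (3,2,3)$. Then $[n]S = \{2,3\}$, both $\widetilde{a_1}$ and $\widetilde{a_2}$ are the identity on $[n]S$, and both $a_1,a_2$ are right identities. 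Now $r := a_1$ admits the factorization $r = a_1 \cdot a_2$ with last letter $a_2$, yet $a_2^{\omega} = a_2 \neq a_1 = r$. So for this factorization your argument would prove a false statement; the ``factor through the image'' heuristic does not rescue it, since $\widetilde{a_1} = \widetilde{a_2}$ while $a_1 \ne a_2$.

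The paper avoids this by taking the \emph{first} generator instead: write $r = a_i b$. Then for every $q \in [n]$ one has $qa_i \in [n]S$, so the relation $\widetilde{b} = \widetilde{a_i}^{\,\omega-1}$ on $[n]S$ applies directly and yields $qr = (qa_i)b = (qa_i)a_i^{\omega-1} = qa_i^{\omega}$. In other words, the correct ``mirror'' of the left-identity proof is not to keep the last letter but to switch to the first letter, reflecting the switch from $\ker(S)$ (which is about what happens \emph{after} an element acts) to $[n]S$ (which is about what happens \emph{before}). Once you make that single change, the rest of your outline goes through exactly as you describe.
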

\begin{proof}
For the backward direction, let $\widetilde{a_i}$ permute $[n]S$. Then $\widetilde{a_i}^\omega$ is the identity map of $\widetilde{S}$. Then, $qsa_i^\omega = qs$ for every $q \in [n]$ and every $s \in S$, making $a_i^\omega$ a right identity of $S$.

For the forward direction, let $r$ be a right identity of $S$. Then $qsr = qs$ for every $q \in [n]$ and every $s \in S$, making $\widetilde{r}$ the identity map on $[n]S$. Then, $r = a_ib$ for some generator $a_i$ where $\widetilde{a_i}$ is a permutation of $[n]S$. As proved in the paragraph above, $\widetilde{a_i}^\omega$ is also the identity map of $[n]S$ and thus $\widetilde{r} = \widetilde{a_i}^\omega$. Then the following holds for every $q \in [n]$:
\[ qa_ib = qa_i\widetilde{a_i}^\omega b = qa_i^\omega\widetilde{a_ib} = qa_i^\omega \]
Therefore, $r$ equals the idempotent power of $a_i$.
\end{proof}
\begin{theorem} \label{RidsThm}
$\RightId$ can be computed in polynomial time.
\end{theorem}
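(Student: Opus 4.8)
The plan is to mirror the proof of Theorem~\ref{LidsThm}, replacing Lemma~\ref{leftidlem} by Lemma~\ref{rightidlem}: that lemma says the right identities of $S = \langle a_1, \dots, a_k\rangle$ are exactly the idempotent powers $a_i^\omega$ of those generators $a_i$ for which $\widetilde{a_i}$ is a permutation of $[n]S$. So the algorithm proceeds in three phases — compute $[n]S$, discard the generators failing the permutation condition, and output the idempotent power of each surviving generator — and I must argue that each phase runs in polynomial time.

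For the first phase, note that every element of $S$ is a nonempty product of generators, so if $a_i$ is the last letter of a word representing $s$ then $[n]s \subseteq [n]a_i$; hence $[n]S = \bigcup_{i=1}^k [n]a_i$, obtained in linear time by scanning the images of the generators (no fixpoint iteration is needed, in contrast to computing $[n]/\ker(S)$ for left identities, which is still easy). For the second phase, since $[n]S\,a_i \subseteq [n]a_i \subseteq [n]S$, each generator $a_i$ restricts to a self-map $\widetilde{a_i}$ of the finite set $[n]S$, and testing whether this restriction is injective — equivalently, a permutation — is done in polynomial time by inspecting $a_i$ on the at most $n$ points of $[n]S$.

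For the third phase, fix $a_i$ with $\widetilde{a_i}$ a permutation of $[n]S$, write $\widetilde{a_i}$ in cycle notation, and let $m$ be the least common multiple of the cycle lengths, so that $\widetilde{a_i}^m$ is the identity map on $[n]S$; note that $m$ is an $O(n)$-bit integer computable in polynomial time. I claim $a_i^m$ is idempotent as a transformation of $[n]$: for any $q \in [n]$ we have $qa_i \in [n]a_i \subseteq [n]S$, so $qa_i^m = (qa_i)\widetilde{a_i}^{m-1}$ and $qa_i^{2m} = (qa_i)\widetilde{a_i}^{2m-1} = (qa_i)\widetilde{a_i}^{m-1}$, whence $qa_i^m = qa_i^{2m}$. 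Since $a_i^m$ is a power of $a_i$, it is the idempotent power of $a_i$ required by Lemma~\ref{rightidlem}, and it is produced by fast exponentiation using polynomially many compositions of transformations. Enumerating $a_i^m$ over all surviving $i$ yields exactly the right identities of $S$.

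The argument is essentially routine, being a left--right dual of Theorem~\ref{LidsThm}; the only points deserving care are that the idempotent-power computation must be carried out against the image action $\widetilde{a_i}$ on $[n]S$ rather than the kernel action $\overline{a_i}$ on $[n]/\ker(S)$ used for left identities, and that $[n]S$ is computed correctly (as a plain union of generator images). I do not anticipate a genuine obstacle.
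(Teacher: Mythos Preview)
Your proposal is correct and follows essentially the same approach as the paper's own proof: invoke Lemma~\ref{rightidlem}, compute $[n]S = \bigcup_{i\in[k]}[n]a_i$, test each $\widetilde{a_i}$ for being a permutation, and for the surviving generators take the lcm $m$ of the cycle lengths and output $a_i^m$, arguing that $a_i^m$ is idempotent because $\widetilde{a_i}^m$ is the identity on $[n]S$ and $qa_i \in [n]S$. Your write-up is in fact slightly more careful than the paper's, since you note explicitly that $m$ may be large as an integer but has polynomially many bits, so $a_i^m$ is computed via repeated squaring.
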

\begin{proof}
By Lemma \ref{rightidlem}, we need only produce the idempotent powers of generators that induce  permutations of $[n]S$. Note that $[n]S = \bigcup_{i \in [k]} [n]a_i$ can be produced in polynomial time. Let $a_i$ be a generator such that $\widetilde{a_i}$ is a permutation of $[n]S$. Represent $\widetilde{a_i}$ in cycle notation and let $m$ be the least common multiple of the lengths of the cycles. Then $\widetilde{a_i}^m$ is idempotent and we claim that $a_i^m$ is idempotent as well. Because $\widetilde{a_i}^m$ is the identity map of $[n]S$, then for any $q \in [n]$, we have that $qa_i^m = qa_i^m\widetilde{a_i}^m = qa_i^{2m}$. Thus, $a_i^m$ is idempotent. Since we found $a_i$ and $m$ in polynomial time, then we can compute all right identities in polynomial time.
\end{proof}

\section{Global Properties} \label{GlobalSection}

In this section, we consider properties of a transformation semigroup $S$ that cannot be checked locally in the sense of Section \ref{LocalSection}.
We now define the following four problems:

\medskip
$\Regbf$
\begin{itemize}
\item Input: $a_1, \dots,a_k \in T_n$
\item Problem: Is there $s \in \langle a_1, \dots,a_{k} \rangle$ such that $a_k s a_k = a_k$?
\end{itemize}

\medskip
$\GenRegbf$
\begin{itemize}
\item Input: $a_1, \dots,a_k,s \in T_n$
\item Problem: Is there $t \in \langle a_1, \dots,a_{k} \rangle$ such that $sts = s$?
\end{itemize}

\medskip
$\GenWeakInvbf$
\begin{itemize}
\item Input: $a_1, \dots,a_k,s \in T_n$
\item Problem: Is there $t \in \langle a_1, \dots,a_{k} \rangle$ such that $tst = t$?
\end{itemize}

\medskip
$\GenInvbf$
\begin{itemize}
\item Input: $a_1, \dots,a_k,s \in T_n$
\item Problem: Is there $t \in \langle a_1, \dots,a_{k} \rangle$ that is an inverse of $s$?
\end{itemize}

We now prove that all four of these problems are $\PSPACE$-complete. We begin by showing that $\Reg$ and $\GenWeakInv$ are $\PSPACE$-hard by adapting a proof by Christian Brandl and Hans Ulrich Simon \cite{BS:CATM}. We reduce the following problem known to be $\PSPACE$-complete \cite{KO:LBN}.

\medskip
{$\DFAIsectbf$}
\begin{itemize}
\item Input: Deterministic finite automata (DFA) $\autA_1,\dots,\autA_k$ over a shared alphabet $\Sigma$, each with a unique final state.
\item Problem: Is there $w \in \Sigma^*$ that sends each initial state to its corresponding final state?
\end{itemize}

\begin{lemma}\label{RegEl} $\Reg$ and $\GenWeakInv$ are $\PSPACE$-hard. \end{lemma}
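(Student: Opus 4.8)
The plan is to reduce $\DFAIsect$ to both $\Reg$ and $\GenWeakInv$ simultaneously by building a single transformation semigroup out of the input automata, following the strategy of Brandl and Simon. Suppose we are given DFAs $\autA_1, \dots, \autA_m$ over a common alphabet $\Sigma = \os{\sigma_1, \dots, \sigma_k}$, where $\autA_j$ has state set $Q_j$, initial state $q_{0,j}$ and unique final state $f_j$. Take the disjoint union $Q = Q_1 \cup \cdots \cup Q_m \cup \os{0}$ of all state sets together with a fresh sink point $0$, and for each letter $\sigma \in \Sigma$ let $a_\sigma \in T_Q$ act on each $Q_j$ exactly as $\sigma$ does in $\autA_j$, and fix $0$. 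Now introduce one additional transformation $s$ designed so that $a_\sigma s a_\tau \cdots = s$ can only be patched back together when some word simultaneously drives every $q_{0,j}$ to $f_j$: concretely, let $s$ collapse each $Q_j$ onto $\os{q_{0,j}}$ if we are going to test $\GenReg$/$\Reg$, or more cleverly let $s$ send $f_j \mapsto q_{0,j}$ for every $j$ and send everything else (including all non-final states and $0$) to $0$. With the latter choice, $s$ has image $\os{q_{0,1}, \dots, q_{0,m}}$, and the only way an element $t = a_{w_1} \cdots a_{w_\ell}$ of the generated semigroup can satisfy $sts = s$ (resp.\ $tst = t$) is if reading $w = w_1 \cdots w_\ell$ in every $\autA_j$ takes $q_{0,j}$ back to $f_j$, so that $t$ restricted to the image of $s$ inverts the collapse. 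The generators of the instance are then $a_{\sigma_1}, \dots, a_{\sigma_k}$ together with $s$ (so that, for $\Reg$, we reorder to make $s = a_k$ the last generator).

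The key steps, in order, are: (1) describe the construction above precisely, including the definition of $s$ on all of $Q$, and observe that it is computable by a log-space — indeed polynomial-time — transducer since $Q$ has size $O(\sum_j \abs{Q_j})$; (2) prove the forward direction: if $w \in \Sigma^*$ sends each $q_{0,j}$ to $f_j$, set $t = a_w$ (the product of generators spelling $w$) and verify by a direct computation on each point of $Q$ that $sts = s$ and $tst = t$, so $s$ is regular with weak inverse $t$; (3) prove the backward direction: if some $t \in \langle a_{\sigma_1}, \dots, a_{\sigma_k}, s \rangle$ satisfies $sts = s$, argue first that we may assume $t$ is a word in the $a_\sigma$ alone (occurrences of $s$ inside $t$ only shrink images and can be shown to be useless, or absorbed), then track the action of $sts$ on each $q_{0,j}$: since $q_{0,j} s = q_{0,j}$ wait — one must instead follow $f_j$ through $s \mapsto q_{0,j} \mapsto q_{0,j} t \mapsto \cdots$ and force $q_{0,j} t = f_j$ for all $j$, yielding the common word $w$; (4) note that the reduction for $\GenWeakInv$ is the same instance, using the same computation in step (2) for $tst = t$ and a symmetric argument in step (3).

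The main obstacle is step (3), the backward direction — specifically, controlling what happens when the candidate inverse $t$ itself uses the new generator $s$. One has to show that interleaving copies of $s$ into $t$ cannot create a "spurious" solution that does not correspond to an accepting word: because $s$ has small image $\os{q_{0,1}, \dots, q_{0,m}}$ and sends non-final states to $0$, any such interleaving either produces $0$ somewhere it shouldn't or can be simplified, but making this rigorous requires a careful case analysis of how $s$ composes with the $a_\sigma$'s. A clean way to handle it is to design $s$ so that $s a_\sigma$ and $a_\sigma s$ have strictly smaller image than $s$ unless $\sigma$ is "harmless," forcing any minimal-length witness $t$ to avoid $s$ entirely; alternatively, one adds a guard point that $s$ fixes and every $a_\sigma$ also fixes, with $s$ acting as the identity there, so that the equation $sts = s$ read at the guard point pins down the structure. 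Once the no-$s$-inside-$t$ reduction is established, the remainder is the routine point-chasing of steps (2) and (3).
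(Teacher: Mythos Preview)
Your reduction for $\Reg$ is essentially the paper's: the transformation $s$ you define (sending each final state $f_j$ to the corresponding initial state $q_{0,j}$ and everything else to $0$) is exactly the paper's generator $b$, and the forward/backward directions go through as you outline. The obstacle you flag in step~(3) --- occurrences of $s$ inside the candidate $t$ --- is handled cleanly by looking at the suffix of $t$ after the \emph{last} occurrence of $s$: if $t = u\, s\, v$ with $v \in \{a_\sigma\}^*$, then $q_{0,j}\, u\, s$ is either $0$ or $q_{0,j}$, and since $q_{0,j}\, t = f_j$ is forced by $sts = s$ (apply it to $f_j$), the suffix $v$ alone already sends every $q_{0,j}$ to $f_j$.

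The $\GenWeakInv$ part, however, does not work with ``the same instance''. First, if $s$ is included among the generators, then $s$ \emph{always} has a weak inverse in the generated semigroup --- namely $s^{\omega-1}$, since $s^{\omega-1}\, s\, s^{\omega-1} = s^{2\omega-1} = s^{\omega-1}$ --- so the answer is trivially positive regardless of whether $\bigcap_j L(\autA_j)$ is empty. Second, even if you drop $s$ from the generating set and keep it only as the external element, your step~(2) claim that $t = a_w$ satisfies $t s t = t$ is false: pick any $q \in Q_j$ with $q\, a_w \ne f_j$ (such $q$ exist unless $w$ happens to synchronize all of $Q_j$ onto $f_j$, which is not part of the hypothesis); then $q\, t s = 0$ and hence $q\, t s t = 0 \ne q\, t$. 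The equation $tst = t$ is genuinely more demanding than $sts = s$ here, because it must hold on \emph{all} of $Q$, not just on the image of $s$.

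The paper resolves this by building a \emph{different} semigroup for $\GenWeakInv$: it replaces the generator $s$ by a ``reset'' transformation $c$ that sends every state of $Q_j$ to $q_{0,j}$ (and fixes $0$), keeps $b$ (your $s$) only as the external element outside the semigroup, and exhibits $t = c\, a_w$ as the weak inverse. The prefix $c$ is exactly what makes $t$ constant on each $Q_j$, so that the verification of $t\, b\, t = t$ succeeds on every point. You need some analogous synchronizing gadget; the symmetric-argument shortcut does not go through.
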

\begin{proof}
  Let $\autA_1, \dots,\autA_k$ be DFAs with corresponding sets of states $Q_1,\dots,Q_k$, initial states $p_1,\dots,p_k$, final states $q_1,\dots,q_k$, and a common alphabet $\Sigma = \{a_1,\dots,a_m\}$.
  We assume without loss of generality that the sets of states are pairwise disjoint, \ie $Q_i \cap Q_j = \emptyset$ for $1 \le i < j \le k$.
  Let
  \begin{align*}
    Q := \{0\} \cup \bigcup \limits_{i=1}^k Q_i
  \end{align*}
  be the disjoint union of the DFA states along with a new state, denoted by $0$.
  Extend the action of $\Sigma$ to transformations on $Q$ by defining $0a_i = 0$ for each $i \in \os{1, \dots, m}$. Define the following additional transformation:
$$qb := \begin{cases}
 p_j & \text{if }q = q_j,\\ 
 0 & \text{otherwise}.
\end{cases} $$

We claim there is a word $w$ in the intersection of the DFA languages iff $b$ is regular in $S := \langle a_1,\dots,a_m,b \rangle$. Assume there is a word $w$ accepted by each $\autA_1,\dots,\autA_k$ so that $p_jw = q_j$ for each $j \in [k]$. Then we can verify that $qb = qbwb$ for all $q \in Q$. If $q$ is not a final state, then $qbwb = 0wb = 0b = 0 = qb$. If $q = q_j$ for some $j \in [k]$, then $q_jbwb = p_jwb = q_jb$. 

Now let $b = bsb$ for some transformation $s \in S$. If $q = q_j$, then $q_jb = p_j$. So, $p_j = p_jsb$. Because $b$ sends everything to $0$ except for $q_j$ and since $0t = 0$ for any $t \in S$, then $s$ must contain a word $w$ that sends $p_j$ to $q_j$. This is true for each $j \in [k]$ so that $w$ must be in each of the DFA languages. Thus, the reduction is complete and $\Reg$ is $\PSPACE$-hard. 

We now reduce $\DFAIsect$ to $\GenWeakInv$. Define:

$$qc := \begin{cases}
s_j & \text{if }q \in Q_j,\\
0 & \text{if }q = 0.
\end{cases}$$

We claim there is a word $w$ accepted by each $\autA_1,\dots,\autA_k$ iff $b$ has a weak inverse in $S := \langle a_1,\dots,a_k,c \rangle$. Assume $w$ is accepted by each $\autA_1,\dots,\autA_k$. We claim $cw$ is a weak inverse for $b$; that is, $qcwbcw = qcw$ for each $q \in Q$. If $q \in Q_j$, then $q cwbcw = p_jwbcw = q_jbcw = p_jcw = p_jw = qcw$. And clearly $0cwbcw = 0 = 0cw$. Conversely, assume there is a $t \in S$ satisfying $tbt = t$. Note that $p_jt \in Q_j$ for all $j \in [k]$. If $p_jt \neq q_j$, then $p_jtbt = 0t = 0 \not \in Q_j$. Thus, $p_jt = q_j$. Because $c$ can only reset states in $Q_j$ back to $p_j$, then $t$ must end with a word $w$ that sends $p_j$ to $q_j$. Then $w$ is accepted by $\autA_1,\dots,\autA_k$. Thus, $\GenWeakInv$ is $\PSPACE$-hard.
\end{proof}

\begin{lemma}\label{GenInv} $\Reg$, $\GenReg$, $\GenWeakInv$, and $\GenInv$ are in $\PSPACE$. \end{lemma}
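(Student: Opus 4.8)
The plan is to show that each of the four problems can be decided by a nondeterministic polynomial-space machine, and then invoke Savitch's theorem (equivalently, $\PSPACE = \NPSPACE$) to conclude membership in $\PSPACE$. The common template is: nondeterministically guess a candidate witness $t \in S = \langle a_1,\dots,a_k\rangle$ and then verify the relevant equation among transformations in $T_n$. The only subtlety is that an element of $S$ can have description length exponential in the input, so we cannot write down a generating word for $t$; instead we guess $t$ \emph{directly as a transformation} in $T_n$, which takes only $O(n \log n)$ space, and separately certify that this transformation actually lies in $S$.

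First I would handle the membership certification. Guessing a transformation $t \in T_n$ and checking whether $t \in S$ is exactly the membership problem for transformation semigroups, which is in $\PSPACE$ by \cite{KO:LBN}; alternatively, and self-containedly, we can verify $t \in S$ in nondeterministic polynomial space by guessing the generators of a word $w = a_{c_1}\cdots a_{c_g}$ one at a time, maintaining only the current partial product (a single transformation, $O(n\log n)$ space) and a step counter bounded by $|T_n| = n^n$ (which needs $O(n \log n)$ bits), accepting when the running product equals the guessed $t$. This shows the set of transformations in $S$ is recognizable in $\NPSPACE = \PSPACE$.

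Next, for each of the four problems I would combine this with the appropriate verification step, all of which are trivial given $s$ (part of the input or equal to $a_k$) and the guessed $t \in T_n$: for $\Reg$, after confirming $t \in \langle a_1,\dots,a_k\rangle$, check $a_k t a_k = a_k$ pointwise on $[n]$; for $\GenReg$, check $s t s = s$; for $\GenWeakInv$, check $t s t = t$; for $\GenInv$, check both $s t s = s$ and $t s t = t$. Each check is a polynomial-time pointwise comparison of transformations on $[n]$. So each problem is in $\NPSPACE$, hence in $\PSPACE$ by Savitch. Combined with Lemma~\ref{RegEl} (and the fact that $\GenReg$ and $\GenInv$ are at least as hard, since $\Reg$ and $\GenWeakInv$ reduce to them by taking $s := a_k$ with the same generators, or by a straightforward padding argument), this would establish $\PSPACE$-completeness of all four problems.

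The main obstacle is purely bookkeeping: making sure that the nondeterministic membership-verification loop runs in genuinely polynomial space, i.e. that the step counter is stored in binary (so $O(n \log n)$ bits, not $n^n$ counters) and that we never store more than one intermediate transformation at a time. Once that is set up cleanly, everything else is routine, and the cleanest exposition is probably just to cite the $\PSPACE$ membership test for transformation semigroups from \cite{KO:LBN} and note that the remaining verification is polynomial-time checkable, so the guess-and-check procedure witnesses membership in $\NP^{\PSPACE} = \PSPACE$.
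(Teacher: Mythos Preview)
Your proposal is correct and follows essentially the same approach as the paper: guess a witness $t$ as a product of generators while maintaining only the running product (a single transformation in $T_n$, hence $O(n\log n)$ space), verify the relevant equation(s), and invoke Savitch's theorem. The paper streamlines this slightly by checking the target equation directly on the running product rather than first guessing $t$ as a transformation and then separately certifying membership, but the underlying argument is the same.
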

\begin{proof}
The non-deterministic Algorithm~\ref{alg:gi} correctly decides $\GenInv$. Since it only requires space to store a single transformation $c$ in $T_n$ and compute products $ca_i, bcb, cbc$, Algorithm~\ref{alg:gi} requires working space $O(n\log(n))$. Hence, $\GenInv$ is in $\NPSPACE$, which is $\PSPACE$ by Savitch's Theorem.

\begin{algorithm}
\caption{  \newline{} Function $\GenInvbf(a_1,\dotsc,a_k,b)$}
  \label{alg:gi}
  \begin{algorithmic}[1]
    \Input{$a_1,\dotsc,a_k,s\in T_n$}
    \Output{Is there $t\in\langle a_1,\dotsc,a_k \rangle$ such that $sts=s$ and $tst=t$?}
    \State guess $i \in [k]$ and let $t := a_i$
      \While{$sts\neq s$ or $tst\neq t$}
      \State guess $i \in [k]$ and let $t := ta_i$
      \EndWhile \\
      \Accept
  \end{algorithmic}
\end{algorithm}

Straighforward adaptations of line 2 in Algorithm~\ref{alg:gi} yield non-deterministic polynomial space algorithms for the other three decision problems. Hence these problems are in $\PSPACE$ as well.
\end{proof}

\begin{theorem} \label{RegThm}
$\Reg$, $\GenReg$, $\GenWeakInv$, and $\GenInv$ are all $\PSPACE$-complete. \end{theorem}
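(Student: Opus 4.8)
The plan is to assemble the completeness result from the two lemmas already in hand. Lemma~\ref{GenInv} gives membership in $\PSPACE$ for all four problems, so only $\PSPACE$-hardness remains. Lemma~\ref{RegEl} already establishes $\PSPACE$-hardness for $\Reg$ and $\GenWeakInv$ directly via reductions from $\DFAIsect$, so those two cases are complete immediately. What is left is to transfer hardness to $\GenReg$ and $\GenInv$.

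For $\GenReg$, the reduction is essentially trivial: $\GenReg$ is a generalization of $\Reg$ in which the element $s$ to be tested for regularity is supplied as part of the input rather than being forced to equal the last generator $a_k$. Concretely, given an instance $a_1,\dots,a_k$ of $\Reg$, I would output the $\GenReg$ instance with the same generators $a_1,\dots,a_k$ together with the distinguished element $s := a_k$. Since $a_k \in \langle a_1,\dots,a_k\rangle$, the question ``is there $t \in \langle a_1,\dots,a_k\rangle$ with $a_k t a_k = a_k$'' is literally the same question, so this is a (log-space computable) correct reduction. Hence $\GenReg$ is $\PSPACE$-hard.

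For $\GenInv$, I would reuse the construction from the second half of the proof of Lemma~\ref{RegEl}, the one that reduces $\DFAIsect$ to $\GenWeakInv$ by adjoining the transformation $c$ and asking whether $b$ has a weak inverse. The point is that the witness exhibited there, $t = cw$ for an accepting word $w$, is in fact a genuine inverse of $b$, not merely a weak inverse: one checks $b(cw)b = b$ as well as $(cw)b(cw) = cw$. Running the computation $q b c w b$ for each $q$: if $q \in Q_j$ then $q b = p_j$ if $q = q_j$ (else $0$), and $q_j b c w b = p_j c w b = s_j w b$, which needs to land back on $q_j$ — so I would either observe that the existing $c$ already makes this work, or, if the bookkeeping does not quite close up, add one further transformation analogous to $c$ (mapping each $p_j$-reachable state appropriately) so that $b t b = b$ holds on the nose. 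Conversely, any $t$ with $tbt = t$ and $btb = b$ is in particular a weak inverse, so the ``only if'' direction is inherited verbatim from Lemma~\ref{RegEl}. This shows $\DFAIsect$ reduces to $\GenInv$, giving $\PSPACE$-hardness.

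Combining: all four problems lie in $\PSPACE$ by Lemma~\ref{GenInv}, and each is $\PSPACE$-hard — $\Reg$ and $\GenWeakInv$ by Lemma~\ref{RegEl}, $\GenReg$ by the trivial reduction from $\Reg$, and $\GenInv$ by the augmented $\DFAIsect$ reduction — hence all four are $\PSPACE$-complete. The only genuinely delicate point I anticipate is verifying the $\GenInv$ reduction: one must check that the weak-inverse witness from Lemma~\ref{RegEl} is simultaneously a left witness (i.e.\ that $b t b = b$), and this requires tracing the action of $b c w b$ on every state class $Q_j$ and on $0$, confirming that the reset behavior of $b$ together with $c$ restores exactly the identity on the relevant points; if it does not, a small modification of the gadget is needed, and getting that modification right is the main obstacle.
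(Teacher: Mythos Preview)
Your plan is correct, and for $\Reg$, $\GenReg$, and $\GenWeakInv$ it matches the paper exactly. The only divergence is in how you obtain $\PSPACE$-hardness for $\GenInv$.

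You propose to revisit the $\GenWeakInv$ gadget from Lemma~\ref{RegEl} and argue that the witness $t = cw$ is in fact a two-sided inverse of $b$. This does work (once you note that $q_j\,b\,cw\,b$ must equal $q_j b = p_j$, not $q_j$ as you wrote; and indeed $p_j c w b = p_j w b = q_j b = p_j$), and the converse is immediate since any inverse is in particular a weak inverse. So no modification of the gadget is needed and the ``main obstacle'' you anticipate dissolves.

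The paper, however, avoids this computation entirely by invoking a standard semigroup fact: every regular element has an inverse (if $sts = s$ then $t' := tst$ satisfies both $st's = s$ and $t'st' = t'$). Hence $a_k$ is regular in $\langle a_1,\dots,a_k\rangle$ iff $a_k$ has an inverse there, and the \emph{same} trivial reduction $s := a_k$ that you used for $\GenReg$ also reduces $\Reg$ to $\GenInv$. This is shorter and sidesteps the gadget bookkeeping altogether; your route has the minor advantage of being self-contained, not relying on the cited fact from Howie.
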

\begin{proof}
Note that every regular element in a semigroup has an inverse in that semigroup \cite[p.51]{HO:FST}. So, $\Reg$ can be reduced to $\GenReg$ and $\GenInv$ by simply letting $s=a_k$. Because $\Reg$ is $\PSPACE$-hard by Lemma \ref{RegEl}, then the other two are as well. And because $\GenReg$ and $\GenInv$ are in $\PSPACE$ by Lemma \ref{GenInv}, then all three problems are $\PSPACE$-complete. $\GenWeakInv$ is $\PSPACE$-complete since it is $\PSPACE$-hard by Lemma \ref{RegEl} and it is in $\PSPACE$ by Lemma \ref{GenInv}.
\end{proof}

Note that we did not define the problem of determining if an element $s \in S$ has a weak inverse in $S$, because this is always true for any element of a finite semigroup: we have $s^{\omega-1} s s^{\omega-1} = s^{2\omega-1} = s^{\omega-1}$, so $s^{\omega-1}$ is a weak inverse of $s$.

We can show that the complexity of determining if a semigroup given by generators is regular is in $\PSPACE$.  Note that we can nondeterministically guess an element of $S$ and, by Theorem \ref{RegThm}, verify in polynomial time that it is not regular. Thus, this problem is in $\co\NPSPACE$, which, by Savitch's Theorem, is $\PSPACE$. It is unknown whether checking semigroup regularity is $\PSPACE$-hard.

\section{Open Problems} \label{Open Problems}

The problems of testing whether a transformation semigroup is a band, has commuting idempotents, is orthodox, is completely regular or a Clifford semigroup were shown to be decidable in non-deterministic logarithmic space.
The problems of testing whether a transformation semigroup is regular or inverse were shown to be decidable in polynomial space.
Lower bounds for each of these problems are still open.

\section*{Acknowledgments}
We would like to thank Peter Mayr and Nik Ru\v{s}kuc for discussions on the material in this paper.

\end{document}